\documentclass[11pt,reqno,intlimits]{amsart}

\usepackage[T1]{fontenc}
\usepackage{amsfonts,amssymb,amsmath,amsthm}
\usepackage{graphicx}
\usepackage{colortbl}
\usepackage[colorlinks=true,urlcolor=blue,citecolor=red,linkcolor=blue,linktocpage,pdfpagelabels,bookmarksnumbered,bookmarksopen]{hyperref}

\makeatletter

\addtolength{\textwidth}{20mm}
\addtolength{\hoffset}{-10mm}
\setlength{\overfullrule}{3pt}

\newtheorem{theorem}{Theorem}[section]
\newtheorem{proposition}[theorem]{Proposition}
\newtheorem{lemma}[theorem]{Lemma}
\newtheorem{corollary}[theorem]{Corollary}
\theoremstyle{definition}
\newtheorem{definition}[theorem]{Definition}

\theoremstyle{remark}
\newtheorem{remark}[theorem]{Remark}
\numberwithin{equation}{section}

\@ifundefined{mathbb}{%
  \newcommand{\R}{{\rm I\mskip -3.5mu R}}
  \newcommand{\N}{{\rm I\mskip -3.5mu N}}
  \newcommand{\Z}{{\rm I\mskip -3.5mu Z}}
}{%
  \newcommand{\R}{{\mathbb R}}
  \newcommand{\N}{{\mathbb N}}
}
\newcommand{\C}{{\mathcal C}}
\newcommand{\X}{{\mathcal X}}
\newcommand{\abs}[1]{\mathopen|#1\mathclose|}
\newcommand{\bigabs}[1]{\bigl|#1\bigr|}
\newcommand{\biggabs}[1]{\biggl|#1\biggr|}
\@ifundefined{leqslant}{}{\let\le=\leqslant  \let\leq=\leqslant}
\@ifundefined{geqslant}{}{\let\ge=\geqslant  \let\geq=\geqslant}
\@ifundefined{varnothing}{}{}
\newcommand{\Z}{{\mathcal Z}}
\newcommand{\IS}{{\mathbb S}}
\renewcommand{\P}{{\mathcal P}}
\newcommand{\Y}{{\mathcal Y}}

\let\Im@ORIG=\Im
\renewcommand{\Im}{\operatorname{\Im@ORIG m}}
\def\e{\varepsilon}
\def\l{\lambda}
\def\a{\alpha}
\def\b{\beta}
\def\S{\mathcal{S}}
\def\di12{\mathcal{D}^{1,2}(\R^n)}

\newcommand{\norm}[1]{\mathopen\|#1\mathclose\|}
\newcommand{\wto}{\rightharpoonup}
\newcommand{\rad}{{\text{\upshape rad}}}

\newcommand{\intd}{\,\mathrm{d}}
\newcommand{\acto}{\diamond}
\newcommand{\diff}{{\mathrm d}}
\DeclareMathOperator{\spanned}{span}


\makeatother

\title{Entire radial and nonradial solutions for systems with critical growth}
\thanks{The first author is partially supported by GNAMPA. The first two
  authors are supported by PRIN-2012-grant ``Variational and
  perturbative aspects of nonlinear differential problems''.
  The third author is partially supported by the project
  ``Existence and asymptotic behavior of solutions
  to systems of semilinear elliptic partial differential equations''
  (T.1110.14)
  of the \emph{Fonds de la Recherche Fondamentale Collective},
  Belgium. }

\author[Gladiali]{Francesca  Gladiali}
\address{Francesca Gladiali, Dipartimento Polcoming, Universit\`a  di Sassari  - Via Piandanna 4, 07100 Sassari, Italy.}

\email{fgladiali@uniss.it}
\author[Grossi]{Massimo Grossi}
\address{Massimo Grossi, Dipartimento di Matematica, Universit\`a di Roma
La Sapienza, P.le A. Moro 2 - 00185 Roma, Italy.}
\email{massimo.grossi@uniroma1.it}
\author[Troestler]{Christophe Troestler}
\address{Christophe Troestler, D\'epartement de math\'ematique,
  Universit\'e de Mons, place du parc 20, B-7000 Mons, Belgium.}
\email{Christophe.Troestler@umons.ac.be}

\subjclass[2010]{Primary 35J47, 35B33, 35B32; Secondary 35B09, 35B08}

\keywords{Non-cooperative system of PDEs, non-radial solutions,
  critical exponent, critical hyperbola, global bifurcation,
  entire solutions.}

\begin{document}

\begin{abstract}
  In this paper we establish existence of radial and nonradial solutions
  to the system
  \begin{equation*}
    \begin{cases}
      \displaystyle
      -\Delta u_1 = F_1(u_1,u_2)
      &\text{in }\R^N,\\
      -\Delta u_2 = F_2(u_1,u_2)
      &\text{in }\R^N,\\
      u_1\geq 0,\ u_2\geq 0 &\text{in }\R^N,\\[1\jot]
      u_1,u_2\in D^{1,2}(\R^N),
    \end{cases}
  \end{equation*}
 where $F_1,F_2$ are  nonlinearities with critical behavior.
\end{abstract}
\maketitle

\section{Introduction}
The aim of this paper is to prove existence of radial and nonradial
solutions to some nonlinear systems
\begin{equation} \nonumber\label{-1}
    \begin{cases}
      \displaystyle
      -\Delta u_1 = F_1(u_1,u_2)
      &\text{in }\R^N,\\
      -\Delta u_2 = F_2(u_1,u_2)
      &\text{in }\R^N,\\
      u_1\geq 0,\ u_2\geq 0  &\text{in }\R^N,\\[1\jot]
      u_1,u_2\in D^{1,2}(\R^N),
    \end{cases}
  \end{equation}
where $F_1,F_2$ are nonlinearities with critical behavior in the Sobolev sense, $N\geq 3$ and $D^{1,2}(\R^N) = \bigl\{u\in L^{2^*}(\R^N)\text{ such that }|\nabla u| \in L^2(\R^N)\bigr\}$  with  $2^*=\frac{2N}{N-2}$.
A common feature of the systems that we will study is their
invariance by \emph{translations} and \emph{dilations}.
Papers on existence or qualitative properties of solutions to systems
with critical growth in $\R^N$ are very few, due to the lack of compactness given by the Talenti bubbles and the difficulties arising for the lack of good variational methods. The first example of system which we consider is given by
\begin{equation} \label{0b}
  \begin{cases}
    \displaystyle
    -\Delta u_1 =\a u_1^{ 2^*-1}  +(1-\a) u_1^\frac2{N-2}u_2^\frac{N}{N-2}
    &\text{in }\R^N,\\
       -\Delta u_2 =\a u_2^{ 2^*-1}+(1-\a) u_2^\frac2{N-2}u_1^\frac{N}{N-2}
    &\text{in }\R^N,\\
    u_1\geq 0,\ u_2\geq 0, \quad   u_1,u_2\in D^{1,2}(\R^N),
  \end{cases}
\end{equation}
where $N\ge 3$ and $\a$ is a real parameter. This system, also known
as Gross-Pitaevskii, arises in many physical contexts such as nonlinear
optics and the Hartree-Fock theory, see \cite{M} for its derivation,
and it is very studied mainly in the
cubic case, which corresponds to the critical case in $\R^4$ or on
bounded domains where the cubic exponent is subcritical in $\R^3$. It
is coupled when $1-\a\neq 0$ and cooperative when
$1-\a>0$. Physically, this condition means the attractive interaction
of the states $u_1$ and $u_2$, while  $1-\a<0$ means the repulsive
interaction between them.
Note that System~\eqref{0b} has a gradient structure with the energy functional
\begin{equation*}
  \mathcal{E}(u_1,u_2)
  = \frac 12 \int_{\R^N} |\nabla u_1|^2+|\nabla u_2|^2
  - \frac{N-2}{2N} \int_{\R^N} \a \bigl(u_1^{2^*}+u_2^{2^*}\bigr)
  + (1-\a) \Bigl(u_1^\frac N{N-2} u_2^\frac N{N-2}\Bigr)
\end{equation*}
even if it is not so easy to apply variational methods to find
solutions. System \eqref{0b} was already considered in \cite{GLW} where the existence of infinitely many nontrivial solutions is obtained using a perturbation argument.\par
Another particular case of~\eqref{-1} is the following generalization of the system considered
by O.~Druet, E.~Hebey~\cite{DH}, namely
\begin{equation}\label{dh}
  \begin{cases}
    -\Delta u_1=\left[\left(\a u_1^{2} + (1-\a) u_2^{2}\right)^2\right]^{\frac1{N-2}}u_1
    & \text{in }\R^N,\\[1\jot]
    -\Delta u_2= \left[ \left((1-\a) u_1^{2} + \a u_2^{2}\right)^2\right]^{\frac 1{N-2}}u_2
    & \text{in }\R^N,\\[1\jot]
    u_1\geq 0,\ u_2\geq 0, \quad  
    u_1,u_2\in D^{1,2}(\R^N).
  \end{cases}
\end{equation}
In \cite{DH} the case of $\a=\frac 12$ was studied and the
stability of solutions on manifolds was considered. Further, the
radial symmetry and uniqueness of the solutions in $\R^N$ is
proved.
 
We also mention the paper \cite{CSW} where the radial symmetry of
solutions is proved for a particular critical nonlinearity.
 
The starting point of our study is the paper \cite{GGT} where we studied the existence of radial solutions for the $k\times k$ system of equations
\begin{equation} \label{0}
  \begin{cases}
    \displaystyle
    -\Delta u_i = \sum\limits_{j=1}^ka_{ij}u_j^{2^* - 1}
    &\text{in }\R^N,\\
    u_i>0 &\text{in }\R^N,\\[1\jot]
    u_i\in D^{1,2}(\R^N),
  \end{cases}
\end{equation}
for $i=1,\dotsc ,k$,
where $N\ge 3$ and the
matrix $A := (a_{ij})_{i,j=1,\dotsc ,k}$ is symmetric and satisfies 
\begin{equation} \label{0a}
\sum\limits_{j=1}^k a_{ij} = 1\text{ for any } i = 1,\dotsc ,k.
\end{equation}
Note that the case $k=2$ and
$A = \bigl(\begin{smallmatrix}0&1\\1&0 \end{smallmatrix}\bigr)$ is
known in the literature as nonlinearity belonging to
the \emph{critical hyperbola}.

Under the assumption  \eqref{0a} it is straightforward that system \eqref{0} always admits the trivial solutions
\begin{equation*}
  u_1 = \dots =u_k = U_{\delta,y}(x)
  := \frac{\left[N(N-2)\delta^2\right]^{\frac{N-2}4}}{
    (\delta^2+|x-y|^2)^{\frac{N-2}2}}
\end{equation*}
for any $\delta>0$ and $y\in \R^N$. 
To simplify the notation, let
\begin{equation}\label{sol}
  U(x) := U_{1,0}(x)
  = \frac {\left[N(N-2)\right]^{\frac{N-2}4}}{
    (1+|x|^2)^{\frac{N-2}2}}.
\end{equation}
A careful study of the linearized system of \eqref{0} at this trivial
solution allows us to prove the existence of nontrivial radial
solutions when the eigenvalues of the matrix $A$ reach some specific
values using bifurcation theory.

Note that System \eqref{0} does not have a
variational structure and indeed our methods do not require it.

Even if the existence of radial solutions to some of the previous
examples~\eqref{0b}--\eqref{0} is a new
result, the main interest is the existence of nonradial ones.
Nonradial solutions may be found mainly for noncooperative systems
where the lack of the maximum principle can give a symmetry breaking
of the solutions. Indeed, in \cite{DH} and \cite{CSW}, the radial
symmetry of the solutions is proved in a particular cooperative case.

In this paper we want to purse several goals.  First, we want to
introduce a new setting which allows us to consider Systems
\eqref{0b}--\eqref{0} jointly.  Indeed all these problems admit the
trivial solutions $ u_1  =u_2 = U_{\delta,y}(x)$ which is the starting
point to apply the bifurcation theory like in~\cite{GGT}.
A general treatment of these problems is possible since we
significantly improve the final part of the paper \cite{GGT} showing
that the \emph{Lagrange multiplier} introduced to ``kill'' the
direction of dilation
invariance coming from the critical Sobolev exponent is indeed a
\emph{natural constraint} if we allow some invariance
(Kelvin invariance) on the
solutions. This lets us switch
from a local bifurcation result in \cite{GGT} to a global one.

This invariance is a good tool to overcome the degeneracy of critical
problems in $\R^N$ which are invariant under dilation
and can also be applied to the result in \cite{DGG}, where a Pohozaev
identity gives the result only locally.

Another technical problem arises since our
nonlinearities in general are not $\C^1$ at zero. This problem was already
noticed by~\cite{GLW} and indeed their existence results are given in
dimension $3$ where they are able to define and to invert the
linearized operator associated to their system.
To overcome this problem we use a different functional setting that allows us to work only with positive values of $u_1$ and $u_2$. Observe that the functional setting of our operator is a delicate part of the proof.

Secondly  we continue the study in \cite{GGT} and we
address to the existence of nonradial solutions to \eqref{0} using
in a tricky way some \emph{even} and \emph{odd} symmetries. Obviously
our solutions cannot be invariant with respect to odd symmetries since
we are looking for positive ones.
But we can introduce a suitable setting (see Eq.~\ref{mf11}) in which
we can make use of
this invariance.  This is a new aspect that has never been investigated
before and that can shed light on how solutions of systems of this
type are.

This use of the symmetries is the key point that allows us to
distinguish between
radial and nonradial solutions.

A crucial step of our method is the characterization of the kernel of the linearized operator associated to our systems.
Actually, in~\cite{GGT}, we find
radial solutions using the classical \textit{Crandall-Rabinowitz
Theorem} which requires a one dimensional
kernel.  This is achieved by restricting the problem to radially
symmetric functions and ``killing'' the direction of scale
invariance.

Considering also nonradial functions the 
dimension of the  kernel increases dramatically and it becomes very
hard to control it.  Moreover it is not clear whether the solution obtained
considering this new kernel is \emph{nonradial}.  As said before, the
use of suitable even and odd symmetries is significant and allows us to prove that in
many cases the kernel contains only nonradial functions and it is
\emph{odd} dimensional. To exploit them, we need some invariance on
the operator associated to our problem. This invariance naturally
appears
in the case of a~$2\times 2$ system while it not clear whether it applies in the general case of more equations as \eqref{0}.  For this reason we focus hereafter on the case ~$2\times 2$ and we believe that a further study is needed to understand the general case. To compute the dimension of the kernel in these symmetric spaces we need 
a classification of symmetric spherical harmonics in $\IS^N$ 
and indeed this is part of Section \ref{s3} and~\ref{s4}.

Finally we also give an asymptotic expansion of the solutions near
the bifurcation point so as to better understand them. In this way we can distinguish different 
nonradial solutions by their symmetries and expansions.

\smallskip

\section{Statement of the main results}

Let us introduce our abstract setting. We consider
\begin{equation}
  \label{eq:system}
  \begin{cases}
    -\Delta u_1 = F_1(\a, u_1, u_2)  & \text{in }\R^N,\\[1\jot]
    -\Delta u_2 = F_2(\a, u_1, u_2)  & \text{in }\R^N,\\[1\jot]
    u_1\geq 0,\ u_2\geq 0,\quad
    u_1,u_2\in D^{1,2}(\R^N),
  \end{cases}
\end{equation}
where the $F_i$ satisfy the following assumptions:  for all $\a\in \R$ and
for $i = 1,2$,
\begin{enumerate}
  \setlength{\itemsep}{3.0pt plus 2.5pt minus 1.0pt}%
  \renewcommand{\theenumi}{F\arabic{enumi}}%
  \renewcommand{\labelenumi}{(\theenumi)}%
\item\label{F-cont} the derivatives $\partial_\a F_i$,
  $\partial_{u} F_i$ and $\partial_{\a u} F_i$ of the map
  $F_i : \R \times (0, +\infty)^2 \to \R : (\a, u) \mapsto F_i(\a, u)$
  exist and are continuous;
\item\label{F-integ} for all $\a \in \R$, there exists a neighborhood
  $\mathcal A$ of $\a$ and a constant $C$ such that, for all
  $\a \in \mathcal A$ and $(u_1, u_2) \in (0, +\infty)^2$,
  $\abs{\partial_u F_i(\a, u_1, u_2)}
  \le C (u_1^{2^* - 2} + u_2^{2^*-2})$ and
  $\abs{\partial_{\a u} F_i(\a, u_1, u_2)}
  \le C (u_1^{2^* - 2} + u_2^{2^*-2})$;
\item\label{F-1} $F_i(\a, 1,1) = 1$;
\item\label{F-crit} $F_i(\a, \lambda u_1, \lambda u_2)
  = \lambda^{2^*-1} \, F_i(\a, u_1, u_2)$ for all $\lambda > 0$ and
  $(u_1, u_2) \in (0, +\infty)^2$;
\item\label{F-symm} $F_1(\a, u_1, u_2) = F_2(\a, u_2, u_1)$
  for all $(u_1, u_2) \in (0, +\infty)^2$;
\item\label{F-transversal}\label{F-last}
  for all $\a$, $\partial_\a \b(\a) > 0$ where
  $\b(\alpha) :=
  \partial_{u_1} F_1(\a, 1,1) - \partial_{u_2} F_1(\a, 1,1)$.
\end{enumerate}
By \eqref{F-1} it is straightforward that System~\eqref{eq:system}
admits, for
any $\a\in\R$, the trivial solution $(u_1, u_2) = (U,U)$ and
\eqref{F-crit} says that our system is scale invariant. Further,
in view of Eq.~\eqref{eq:system}, it is also translation invariant.

This generalization encompasses the following \emph{Schrodinger}
system
\begin{equation}
  \label{scr}
  \begin{cases}
 -\Delta u_1=\a u_1^{2^*-1} + (1-\a) u_1^pu_2^{2^*-1-p}
    & \text{in }\R^N,\\[1\jot]
    -\Delta u_2=(1- \a) u_1^{2^*-1-p}u_2^p+\a u_2^{2^*-1} 
    & \text{in }\R^N,\\[1\jot]
    u_1\geq 0,\ u_2\geq 0, \quad  
    u_1,u_2\in D^{1,2}(\R^N),
  \end{cases}
\end{equation} 
with $0\leq p<2^*-1$ and $\a$ is a real parameter. When $p=0$ System \eqref{scr} becomes
\begin{equation}
  \label{1}
  \begin{cases}
 -\Delta u_1=\a u_1^{2^*-1} + (1-\a) u_2^{2^*-1}
    & \text{in }\R^N,\\[1\jot]
    -\Delta u_2=(1- \a) u_1^{2^*-1}+\a u_2^{2^*-1} 
    & \text{in }\R^N,\\[1\jot]
    u_1\geq 0,\ u_2\geq 0, \quad  
    u_1,u_2\in D^{1,2}(\R^N),
  \end{cases}
\end{equation} 
 while for $p=\frac 2{N-2}$ we get System \eqref{0b}. Moreover System \eqref{eq:system} includes System \eqref{dh}.

Our first result is the generalization of the local radial bifurcation
result obtained in \cite{GGT} for  \eqref{1} to a global one for System~\eqref{eq:system}.
An important role in our results will be played  by  the Jacobi
polynomials  $P_j^{(\beta, \gamma)}$ that we introduce now. They are
defined as
  \begin{equation}
    P_m^{(\beta, \gamma)}(\xi)
    = \sum_{s=0}^m \binom{m+\beta}{s} \binom{m+\gamma}{m-s}
    \left(\frac{\xi-1}2\right)^{m-s} \left(\frac{\xi+1}2\right)^s 
  \end{equation}
for $m\in \N$, $\beta,\gamma\in \R^+$ and $\xi\in \R$.
\begin{theorem}\label{radial}
  Assume \eqref{F-cont}--\eqref{F-last}.
  The point $(\a^*, U,U)$ is a \emph{radial} bifurcation point from
  the curve of trivial solutions $(\a,U,U)$ to
  System~\eqref{eq:system} if $\a^*$ satisfies
  \begin{equation}
    \label{eq:degen}
    \b(\a^*)
    = \frac{(2n+N)(2n+N-2)}{N(N-2)}
  \end{equation}
  for some $n \in \N$, where $\b$ is defined
  in~\eqref{F-transversal}.
  More precisely there exists a continuously differentiable curve
  defined for $\e$ small enough
  \begin{equation}\nonumber
    (-\e_0,\e_0) \to \R \times \bigl(D^{1,2}_\rad(\R^N)\bigr)^2 :
    \e \mapsto \bigl(\a(\e), u_1(\e), u_2(\e)\bigr)
  \end{equation}
  passing through $(\a^*,U,U)$, i.e.,
  $\bigl(\a(0),u_1(0), u_2(0)\bigr) = (\a^*,U,U)$,
  such that, for all $\e \in (-\e_0,\e_0)$,
  $(u_1(\e), u_2(\e))$ is a radial solution to~\eqref{eq:system} 
  with $\a = \a(\e)$. Moreover,
  \begin{equation}\label{c-curve}
    \begin{cases}
      u_1(\e)
      = U + \e W_n(|x|) + \e \phi_{1,\e}(|x|),\\
      u_2(\e)
      = U - \e W_n(|x|) + \e \phi_{2,\e}(|x|),
    \end{cases}
  \end{equation}
  with $W_n$ being the function  
  \begin{equation}\label{W-n}
    \begin{split}
      W_n(|x|)
      & := \frac 1{\left(1+|x|^2\right)^{\frac{N-2}2}} \,
      P_n^{\left(\frac{N-2}{2}, \frac{N-2}{2} \right)}
      \left(\frac {1-|x|^2}{1+|x|^2}\right) 
    \end{split}
  \end{equation}
 where $\phi_{1,\e},\phi_{2,\e}$ are functions uniformly bounded in
  $D^{1,2}(\R^N)$ with respect to $\e \in (-\e_0,
  \e_0)$, and such that $\phi_{i,0}=0$ for $i=1,2$. Finally the
  bifurcation is global and the Rabinowitz alternative holds.
\end{theorem}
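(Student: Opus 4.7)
The plan is to recast Theorem \ref{radial} as an application of the Crandall--Rabinowitz bifurcation theorem, followed by the Rabinowitz global alternative, in a functional setting on radial functions tailored to three difficulties: (i) the critical Sobolev growth, (ii) the dilation invariance which always contributes an element to the kernel, and (iii) the failure of $F_i$ to be $\C^1$ at zero.

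First I would decouple the linearization at $(U,U)$ by passing to $v = \tfrac12(u_1+u_2)-U$ and $w = \tfrac12(u_1-u_2)$. Setting $a := \partial_{u_1}F_1(\a,1,1)$ and $b := \partial_{u_2}F_1(\a,1,1)$, assumption \eqref{F-symm} gives $\partial_{u_1}F_2(\a,1,1) = b$ and $\partial_{u_2}F_2(\a,1,1) = a$, so by \eqref{F-crit} the linearized system diagonalises as $-\Delta v = (a+b)\,U^{2^*-2}v$ and $-\Delta w = (a-b)\,U^{2^*-2}w$. Differentiating $F_i(\a,\l,\l) = \l^{2^*-1}$ at $\l=1$ gives $a+b = 2^*-1$, independent of $\a$, while $a-b = \b(\a)$ by definition. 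Hence the symmetric block is the fixed Aubin--Talenti linearization and only the antisymmetric block depends on~$\a$, which explains the ansatz $u_1(\e) = U+\e W_n+\dotsb$, $u_2(\e) = U-\e W_n+\dotsb$ of~\eqref{c-curve}.

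Next I would identify the spectrum of $-\Delta - \b\, U^{2^*-2}\id$ on radial $D^{1,2}(\R^N)$ via the stereographic pull-back to $\IS^N$: the operator conjugates to (a rescaling of) $-\Delta_{\IS^N}+\tfrac{N(N-2)}{4}\id$, whose zonal eigenfunctions are Jacobi polynomials of index $(\tfrac{N-2}{2},\tfrac{N-2}{2})$; transporting them back yields precisely the $W_n$ of~\eqref{W-n} with eigenvalues $\b_n = \frac{(2n+N)(2n+N-2)}{N(N-2)}$. Each $\b_n$ is simple in the radial class. Consequently, when $\b(\a^*) = \b_n$, the antisymmetric linearization has one-dimensional kernel $\spanned\{W_n\}$ with closed range of codimension one, and the transversality hypothesis of Crandall--Rabinowitz reduces, after one differentiation, to $\partial_\a \b(\a^*) > 0$, which is exactly \eqref{F-transversal}.

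Finally, I would frame \eqref{eq:system} as $T(\a,u_1,u_2)=0$ with $T\colon \R \times \mathcal{U} \to (D^{1,2}_\rad(\R^N))^2$, where $\mathcal{U}$ is a neighbourhood of $(U,U)$ in the open cone of componentwise strictly positive radial pairs; there, \eqref{F-cont}--\eqref{F-integ} make $T$ of class $\C^1$, which is how the lack of regularity at zero is side-stepped. To make the kernel genuinely simple I would quotient out the dilation direction $\partial_\delta U_{\delta,0}|_{\delta=1}$ present in the symmetric block for every $\a$ by restricting to a Kelvin-invariant subspace, or equivalently by introducing a Lagrange multiplier and invoking the ``natural constraint'' property announced in the introduction, so that constrained solutions solve~\eqref{eq:system}. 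Crandall--Rabinowitz then yields the local curve and the expansion~\eqref{c-curve} with $\phi_{i,\e}$ uniformly bounded in $D^{1,2}(\R^N)$; the global conclusion follows from the Rabinowitz alternative, valid because the simple eigenvalue crossing at $\a^*$ changes the Leray--Schauder index by an odd amount. The main obstacle, and the step that genuinely improves upon~\cite{GGT}, is the design of this functional setting: one must simultaneously kill the dilation degeneracy, keep the nonlinear map $\C^1$ despite \eqref{F-cont} failing at zero, and arrange for the Lagrange multiplier to vanish automatically; once this is achieved, the remainder is classical bifurcation machinery applied to an explicit Jacobi-polynomial eigenvalue problem on $\IS^N$.
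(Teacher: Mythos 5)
Your overall strategy coincides with the paper's: the decoupling $z_1=u_1+u_2-2U$, $z_2=u_1-u_2$, the classification of the radial kernel by Jacobi polynomials with eigenvalues $\b_n=\frac{(2n+N)(2n+N-2)}{N(N-2)}$, the use of Kelvin invariance to remove the dilation mode $W$, Crandall--Rabinowitz with transversality $\partial_\a\b(\a^*)>0$ for the local $\C^1$ curve, and an odd crossing number for the global Rabinowitz alternative. However, the concrete functional setting you propose is exactly the step that would fail. Working in $\bigl(D^{1,2}_\rad(\R^N)\bigr)^2$ on ``a neighbourhood of $(U,U)$ in the open cone of componentwise strictly positive pairs'' does not work: pointwise strict positivity is \emph{not} an open condition in the $D^{1,2}$ topology (one can perturb $U$ by bumps of arbitrarily small Dirichlet norm that make it negative where $U$ is small), while $F_i$ is only defined and $\C^1$ on $(0,+\infty)^2$; so you never get a $\C^1$ map on an open neighbourhood of the trivial branch, which Crandall--Rabinowitz requires. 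Moreover, for the Leray--Schauder degree and the Rabinowitz alternative you need $T$ to be a compact perturbation of the identity, and $(-\Delta)^{-1}$ composed with a critically growing nonlinearity is \emph{not} compact on $D^{1,2}(\R^N)$ (loss of compactness by concentration/translation). The paper resolves both points at once with the weighted space $X=D^{1,2}(\R^N)\cap D$ of \eqref{eq:defX}, where $\norm{u}_D=\sup|u|/U$: the set $\X$ (where $\abs{z_2}\le(2-\delta)U+z_1$) is open in $X^2$ and keeps $U+\frac{z_1\pm z_2}{2}>0$, giving the $\C^1$ regularity of Lemma~\ref{lemma1}, and the pointwise bound $|z_i|\le CU$ combined with the Riesz-potential decay estimate (Lemma~\ref{lemma-ST}) yields the compactness of Lemma~\ref{lemma-comp}. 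Without this (or an equivalent device), both the degree-theoretic global statement and even the smoothness hypothesis of the local theorem are unsubstantiated.

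A second, smaller omission: it is not enough to impose Kelvin invariance ``on the symmetric block''. For the restriction to an invariant subspace to be legitimate one needs $T$ to be equivariant, and the proof of that equivariance (Lemma~\ref{lemma1}, using \eqref{F-crit} and \eqref{2.3-b}) requires the second component $z_2$ to have a definite Kelvin parity as well; moreover the sign must be matched to the parity of $n$, since $W_n=W_{n,0}\in X_k^+$ when $n$ is even and $W_n\in X_k^-$ when $n$ is odd. This is why the paper works in $\Z^+_\rad\subseteq X_k^+\times X_k^+$ for $n$ even and $\Z^-_\rad\subseteq X_k^+\times X_k^-$ for $n$ odd; choosing the wrong parity would leave $W_n$ outside the space and no bifurcation would be detected. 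With these two corrections your argument aligns with the paper's proof.
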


The values $\a^*$ in \eqref{eq:degen} are all of those for which the
linearized system at the trivial solution $(U,U)$ is non-invertible
showing that condition \eqref{eq:degen} is also necessary.

\begin{corollary}\label{result-scr}
  For any $n\in \N$, let 
  \begin{equation}\label{3}
    \a^*_{n}=\begin{cases} 
      \frac{(2n+N-2)(2n+N)}{2N(N+2-p(N-2))}
      + \frac{N+2}{2 (N+2-p(N-2))} - \frac {p(N-2)}{N+2-p(N-2)}
      &\text{in }~\eqref{scr},\\[3\jot]
      \frac{(2n+N-2)(2n+N)}{2N^2} + \frac{N-2}{2N}
      &\text{in }~\eqref{0b},\\[3\jot]
      \frac{(2n+N-2)(2n+N)+N(6-N)}{8N}
      &\text{in }~\eqref{dh}.
    \end{cases} 
  \end{equation}
  Then $(\a^*, U,U)$ is a radial bifurcation point of
  Systems~\eqref{scr}, \eqref{0b} and \eqref{dh} from its curve of trivial solutions $(\a,U,U)$ if
  $\a^* = \a^*_n$ for some $n\in\N$.
  Moreover, the expansion around the bifurcation point given by
  Theorem~\ref{radial} holds and the curve is global.
\end{corollary}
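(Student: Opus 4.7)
The plan is to apply Theorem~\ref{radial} to each of the three Systems~\eqref{scr}, \eqref{0b} and \eqref{dh} after identifying their nonlinearities with the abstract form \eqref{eq:system}. The corollary will follow once I verify the assumptions \eqref{F-cont}--\eqref{F-last} and compute $\b(\a)$ explicitly, since inverting the relation $\b(\a^*) = (2n+N)(2n+N-2)/(N(N-2))$ then yields the announced values of $\a^*_n$.

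First I would carry out the verification of \eqref{F-cont}--\eqref{F-last}. The hypotheses \eqref{F-1}, \eqref{F-crit} and \eqref{F-symm} are immediate from the algebraic form of the right-hand sides, since in each case $F_1$ and $F_2$ are homogeneous of degree $2^*-1$ in $(u_1,u_2)$, take the value $1$ at $(1,1)$, and are exchanged under $(u_1,u_2)\mapsto(u_2,u_1)$. For \eqref{F-cont} and \eqref{F-integ} one differentiates in $(0,+\infty)^2$ and uses that each term is a monomial in $u_1,u_2$ of total degree $2^*-1$, so that $\partial_u F_i$ and $\partial_{\a u} F_i$ are bounded, up to a multiplicative constant depending continuously on $\a$, by $u_1^{2^*-2}+u_2^{2^*-2}$; for System~\eqref{dh} this requires a short expansion of $(\a u_1^2+(1-\a)u_2^2)^{2/(N-2)}$ but is still routine.

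Next I would compute $\b(\a) = \partial_{u_1} F_1(\a,1,1) - \partial_{u_2} F_1(\a,1,1)$ in each case. For System~\eqref{scr} a direct computation yields
\begin{equation*}
  \b(\a)
  = \a(2^*-1) + (1-\a)\bigl(2p - (2^*-1)\bigr)
  = \a\,\frac{N+2}{N-2} + (1-\a)\left(2p - \frac{N+2}{N-2}\right),
\end{equation*}
which is affine in~$\a$ with slope $2\bigl(\tfrac{N+2}{N-2} - p\bigr) > 0$ since $p < 2^*-1$, giving \eqref{F-last}. The cases of \eqref{0b} and \eqref{dh} are obtained similarly; in particular, writing $g := \a u_1^2+(1-\a)u_2^2$ in~\eqref{dh} gives
\begin{equation*}
  \b(\a) = 1 + \frac{8\a-4}{N-2},
\end{equation*}
whose $\a$-derivative is $8/(N-2) > 0$, and \eqref{0b} is recovered from \eqref{scr} by the specialization $p = 2/(N-2)$.

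Finally I would solve $\b(\a) = \frac{(2n+N)(2n+N-2)}{N(N-2)}$ for $\a$ in each of the three settings. Since $\b$ is affine and strictly increasing, this equation has the unique solution $\a^*_n$; rearranging the formula for~\eqref{scr} produces the first line of~\eqref{3}, setting $p=2/(N-2)$ yields the second, and solving the corresponding equation for~\eqref{dh} gives the third. The conclusion of Theorem~\ref{radial}, including the expansion~\eqref{c-curve} around $(\a^*_n, U, U)$ and the global character of the bifurcating branch with the Rabinowitz alternative, then transfers verbatim to each of the three systems. The only step that requires any care is the verification of~\eqref{F-integ} for~\eqref{dh}, because the base of the power $(\,\cdot\,)^{2/(N-2)}$ could vanish when $\a\notin[0,1]$, but the neighborhood $\mathcal A$ of~$\a$ in~\eqref{F-integ} can be chosen small enough so that the estimate holds on $(0,+\infty)^2$ by standard inequalities on positive quadratic forms.
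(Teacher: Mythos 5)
Your proposal is correct and takes essentially the same route as the paper's proof: verify \eqref{F-cont}--\eqref{F-last}, compute the affine function $\b(\a)$ for each system (your formulas agree with the paper's $\b(\a)=2(2^*-1-p)\a-(2^*-1-2p)$ for \eqref{scr}, $2^*\a-1$ for \eqref{0b}, and $\tfrac{8}{N-2}\a-\tfrac{6-N}{N-2}$ for \eqref{dh}), and then invert \eqref{eq:degen} to obtain \eqref{3}, after which Theorem~\ref{radial} applies verbatim. One minor caution: your closing remark on \eqref{F-integ} for \eqref{dh} cannot be fixed merely by shrinking $\mathcal A$, since for a fixed $\a\notin[0,1]$ the form $\a u_1^2+(1-\a)u_2^2$ vanishes along a ray in $(0,+\infty)^2$; but this delicate point is passed over in the paper as well (it simply states the hypotheses are ``easy to check''), so it does not separate your argument from theirs.
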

\begin{remark}
An interesting fact is that in~\eqref{scr} the
exponent $p$ does not
enter in a relevant way in the proof of the previous results and indeed the solutions we find have,
near a bifurcation point,  the same expansion for every value of
$p$. In this way we have a path of
solutions connecting \eqref{1} with  \eqref{0b} showing that these solutions are not due the variational structure of \eqref{1}.
\end{remark}

The next step is to find nonradial solutions. In \cite{GL} was proved
that in the cooperative case (i.e., when $1-\a>0$),
System~\eqref{scr} admits only radial solutions.
Note that, for all $n \ge 1$, $1 - \a^*_n < 1 - \a^*_1 = 0$
where $\a^*_n$ is defined by~\eqref{3}. Then  $\a^*_n$ are good
``candidates'' to find nonradial solutions. Moreover,
at each value $\a^*_n$ the linearized system possesses many nonradial
solutions and the kernel
becomes richer and richer as $n\to \infty$ (see
Proposition~\ref{prop-lin}).
 However, one technical problem in looking for nonradial solutions is that the
kernel of the linearized problem at a degeneracy point always
contains the radial function $W_n$ defined by~\eqref{W-n}.  So our aim
becomes to choose a suitable
subspace of the kernel in which $W_n$ does not lies. This will
be done by using in a tricky way some odd-symmetries.  It is possible
indeed to apply such symmetries to a linear combination of the
components $u_1,u_2$ even if the solutions we are interested in are
positive.

Here is our basic idea: if one writes
\begin{equation}\label{mf11}
  \begin{cases}
    u_1 = U+\frac{z_1 +z_2}2 \\
    u_2 =  U+\frac{z_1 -z_2}2
  \end{cases}
\end{equation}
then the system satisfied by $z_1,z_2$ admits solutions obtained by imposing the following symmetries
on $(z_1, z_2)$:
\begin{equation}\label{mf12}
  \forall (x', x_N) \in \R^N,\qquad
  z_1(x', x_N) = z_1(\abs{x'}, -x_N)
  \text{ and }
  z_2(x', x_N) = -z_2(\abs{x'}, -x_N) ,
\end{equation}
(more general symmetries will be imposed later; see
Section~\ref{sec:first-case} for more details).
The \emph{crucial} remark is that the new system in $(z_1,z_2)$
obtained by \eqref{mf11} is  \emph{invariant} for the symmetries in
\eqref{mf12} (see \eqref{2.0}--\eqref{2.3-b}).
This use of odd symmetries  is unclear if we considered directly
System \eqref{eq:system}.

In order to state our first nonradial bifurcation
result, we use in $\R^N$ the spherical coordinates
$(r,\varphi,\theta_1,\dots,\theta_{N-2})\in [0,+\infty)\times
[0,2\pi)\times[0,\pi)^{N-2}$.
We have
\begin{theorem}\label{teo1}
  Assume \eqref{F-cont}--\eqref{F-last} and let $\a^*_n$ be the
  unique solution to~\eqref{eq:degen} for some $n \in \N$.
  The point $(\a^*_n, U,U)$ is a \emph{nonradial} bifurcation point
  for the curve of
  trivial solutions $(\a,U,U)$ to System \eqref{eq:system} when
  $n \bmod 4 \in \{1,2\}$. More precisely, there exist a
  \emph{continuum} $\mathcal{C}$ of nonradial solutions $(u_1,u_2)$ 
  to System~\eqref{eq:system},
  bifurcating from $(\a^*_n,U,U)$; the bifurcation is global
  and the Rabinowitz alternative holds.  Finally for
  any sequence of solutions $(\a_k, u_{1,k}, u_{2,k}) \to
  (\a^*_n, U, U)$, we have that (up to a subsequence)
 \begin{equation}\label{num0}
    \begin{cases}
      u_{1,k}
      = U + \e_k Z_n\left(x\right)
      + o(\e_k) ,\\
      u_{2,k}
      = U - \e_k Z_n\left(x\right)
      + o(\e_k) ,
      \end{cases}
    \end{equation}
    as $k\to \infty$ where $\e_k = \|z_{2,k}\|_X \to 0$
    (see \eqref{mf11} and \eqref{eq:defX})
    and $Z_n\not\equiv0$ is the function
    \begin{equation}\label{num1}
      Z_n(x) = \hspace{-3pt}\sum_{h=1,\ h \text{\upshape\ odd}}^n a_h
      \frac{r^h}{(1+r^2)^{h+\frac{N-2}2}}
      P_{n-h}^{\left(h+\frac{N-2}2,\thinspace h+\frac{N-2}2\right)}
      \left(\frac{1-r^2}{1+r^2}\right)
      P_h^{\left(\frac{N-3}2,\frac{N-3}2\right)} (\cos\theta_{N-2})
    \end{equation}
    for some coefficients $a_h\in \R$. 
\end{theorem}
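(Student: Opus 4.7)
The plan is to recast the problem through the change of variables~\eqref{mf11} so that the new system in $(z_1,z_2)$ inherits the symmetries~\eqref{mf12} as a built-in invariance, and then apply a global bifurcation theorem of Rabinowitz type in a carefully chosen symmetric functional setting $X$. Concretely, I would rewrite system~\eqref{eq:system} as
\begin{equation*}
  -\Delta z_i = \widetilde F_i(\a, z_1, z_2), \qquad i = 1, 2,
\end{equation*}
where the $\widetilde F_i$ are obtained by inserting~\eqref{mf11} into~\eqref{eq:system}, so that $(z_1,z_2) = (0,0)$ is the trivial solution for every $\a$. Using the symmetry property \eqref{F-symm} together with the parity choices in~\eqref{mf12}, the new system is invariant under the action on $(z_1,z_2)$ that sends $(z_1,z_2)(x',x_N) \mapsto (z_1,-z_2)(\abs{x'},-x_N)$, and by the principle of symmetric criticality (or a direct verification on the Euler--Lagrange equations in non-variational form) it suffices to look for fixed points in the symmetric subspace. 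As in~\cite{GGT}, the scale invariance is neutralized by introducing the Kelvin-invariance constraint together with its Lagrange multiplier, which by the improvement announced in the introduction is a natural constraint for the symmetric reformulation.

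The heart of the proof is the linearized analysis at $(\a^*_n, 0, 0)$ in the symmetric space $X$. Since the linearization at $(U,U)$ of the original system decouples after the substitution~\eqref{mf11} into a sum and a difference equation, the $z_2$-component satisfies the critical linear problem
\begin{equation*}
  -\Delta \psi
  = \b(\a^*_n) \, U^{2^*-2} \psi
  \quad\text{in } \R^N .
\end{equation*}
Its kernel on $D^{1,2}(\R^N)$ is explicitly described via spherical harmonics and the Jacobi polynomials appearing in~\eqref{W-n} and~\eqref{num1}: one obtains basis functions of the form $\dfrac{r^h}{(1+r^2)^{h+(N-2)/2}} \, P_{n-h}^{(h+(N-2)/2,\,h+(N-2)/2)}\!\left(\tfrac{1-r^2}{1+r^2}\right)Y_h(\sigma)$, where $Y_h$ is a spherical harmonic of degree~$h$ on $\IS^{N-1}$. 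Imposing the odd symmetry of $z_2$ in $x_N$ restricts $Y_h$ to have odd degree in $\cos\theta_{N-2}$, so only odd values of $h$ survive; further imposing radiality in $x'$ forces $Y_h$ to be the axially symmetric Gegenbauer polynomial $P_h^{((N-3)/2,(N-3)/2)}(\cos\theta_{N-2})$. The radial mode $W_n$ (which always lies in the kernel and blocks the route to nonradial solutions) corresponds to $h=0$, hence is eliminated by the odd-symmetry constraint. After quotienting out the scale-invariance direction through the Kelvin constraint, the kernel in $X$ is spanned by the admissible odd~$h$ with $1\le h\le n$; a direct count shows its dimension equals the number of odd integers in $\{1,\dots,n\}$, which is odd precisely when $n\bmod 4 \in\{1,2\}$.

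Once an odd-dimensional kernel is secured, a global bifurcation follows from the Krasnoselskii--Rabinowitz theorem applied to the compact operator reformulation $z = K_\a(z)$ of the symmetric system on $X$, where compactness is produced by writing $(-\Delta)^{-1}$ composed with the Niemytski operator built from the $\widetilde F_i$, using the growth bound~\eqref{F-integ} together with the improved-integrability exchange furnished by the Kelvin invariance. The transversality hypothesis~\eqref{F-transversal} yields a sign change of the Leray--Schauder degree across $\a^*_n$, producing a continuum $\mathcal{C}$ of nontrivial solutions bifurcating from $(\a^*_n,0,0)$, and the Rabinowitz alternative transfers verbatim. Translating back through~\eqref{mf11} gives a continuum of solutions of~\eqref{eq:system} whose second component $u_2 - u_1 = -z_2$ is nontrivial and antisymmetric, hence nonradial.

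Finally, to obtain the expansion~\eqref{num0}--\eqref{num1}, I would take any sequence $(\a_k,u_{1,k},u_{2,k})\to(\a^*_n,U,U)$ on $\mathcal{C}$, set $\e_k := \|z_{2,k}\|_X \to 0$, and pass to the limit in the normalized equation for $z_{2,k}/\e_k$. Elliptic regularity, combined with the growth bound~\eqref{F-integ}, gives strong $X$-convergence of a subsequence to a nonzero element of the restricted kernel, which is precisely a function of the form~\eqref{num1}; substituting back into $u_1 = U + (z_1+z_2)/2$ and $u_2 = U + (z_1-z_2)/2$ and using that $z_{1,k}$ is higher order (it solves a linear equation driven by quadratic terms in $z_{2,k}$) yields~\eqref{num0}. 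The main obstacle is the kernel computation and parity accounting in the symmetric subspace: the careful juggling of Kelvin invariance, odd-in-$x_N$ symmetry of $z_2$, and the explicit representation through Jacobi polynomials of mixed indices must come out odd-dimensional, and this is exactly where the arithmetic condition $n\bmod 4\in\{1,2\}$ enters.
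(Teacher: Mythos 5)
Your overall architecture is the paper's: the change of variables \eqref{mf11}, the restriction to functions that are $O(N-1)$-invariant with $z_1$ even and $z_2$ odd in $x_N$, the identification of the kernel with the zonal modes $W_{n,h}(r)\,P_h^{\left(\frac{N-3}2,\frac{N-3}2\right)}(\cos\theta_{N-2})$ for odd $h$, the parity count giving $n \bmod 4\in\{1,2\}$, a Leray--Schauder degree jump for the global (Rabinowitz) bifurcation, nonradiality from the oddness of $z_2$, and a normalization/compactness argument for \eqref{num0} (this is Proposition \ref{mf1} combined with Proposition \ref{p2.12}). However, two mechanisms are glossed in a way that would break the proof as written. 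First, the Kelvin bookkeeping: in the paper there is no Lagrange multiplier and the Kelvin constraint is not imposed uniformly. One takes $z_1\in X_k^+$ -- this is what removes the dilation mode $W\in X_k^-$ from the first linearized equation and makes $\partial_z T(\a,0,0)$ invertible whenever $\b(\a)\ne\b_n$, so that the degree and its jump are defined -- while $z_2$ must also carry a definite Kelvin parity, with the sign adapted to $n$: since $W_{n,h}\in X_k^+$ iff $n-h$ is even, one takes $z_2\in X_k^-$ for $n$ even and $z_2\in X_k^+$ for $n$ odd. If $z_2$ has no Kelvin parity, $T$ does not map your symmetric space into itself (the Kelvin-transform argument for the first component requires it); if instead you impose $k(z_2)=z_2$ for every $n$, the kernel is trivial for $n\equiv 2\pmod 4$ and half of the theorem is lost. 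Your count ``number of odd integers in $\{1,\dots,n\}$'' is indeed the paper's $\gamma(n)$, but it is correct only with this parity-adapted choice, which you never make explicit.

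Second, compactness and well-posedness do not come from Kelvin invariance or any ``improved-integrability exchange''. On $D^{1,2}(\R^N)$ alone the critical Nemytskii operator composed with $(-\Delta)^{-1}$ is not compact; the paper obtains compactness (Lemma \ref{lemma-comp}) by working in the weighted space $X=D^{1,2}(\R^N)\cap D$, where the bound $\abs{z_i}\le C U$ yields the domination $\abs{f_i}\le C U^{2^*-1}$ and convergence in the $D$-norm via the Riesz-potential decay estimate of Lemma \ref{lemma-ST}. The same weighted setting, together with the open set $\X$ where $\abs{z_2}\le (2-\delta)U+z_1$, is what makes $F_i\bigl(\a, U+\frac{z_1\pm z_2}2\bigr)$ well defined and differentiable in the first place, since the $F_i$ are only assumed $C^1$ on $(0,+\infty)^2$; your sketch never addresses this, although it is precisely the delicate functional setting on which the degree argument and the limit passage for \eqref{num0} rest. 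Apart from these two points, your kernel computation, parity condition, and expansion argument coincide with the paper's.
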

\noindent Observe that the functions
$P_h^{\left(\frac{N-3}2,\frac{N-3}2\right)} (\cos\theta_{N-2})$ are
the spherical harmonics that are $O(N-1)$-invariant.
\begin{corollary}\label{cor:scr}
  Let $n\in\N$ and $\a^*_n$ as defined in Corollary \ref{result-scr}.
  Then the same
  claims of Theorem~\ref{teo1} hold for Systems~\eqref{scr} and~\eqref{dh}.
\end{corollary}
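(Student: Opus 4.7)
The plan is to reduce Corollary~\ref{cor:scr} to a direct application of Theorem~\ref{teo1} by verifying that the nonlinearities of Systems~\eqref{scr} and~\eqref{dh} fit into the abstract framework~\eqref{eq:system} and satisfy hypotheses \eqref{F-cont}--\eqref{F-last}, and then by checking that the values $\a^*_n$ in~\eqref{3} solve the degeneracy equation~\eqref{eq:degen}. Because Theorem~\ref{teo1} and Theorem~\ref{radial} rest on the same abstract hypotheses, this verification is essentially the one already performed in Corollary~\ref{result-scr}, so no new computation is needed beyond identifying the $F_i$ correctly.

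Concretely, for~\eqref{scr} one takes
\[
  F_1(\a,u_1,u_2) := \a u_1^{2^*-1} + (1-\a) u_1^p u_2^{2^*-1-p},
  \qquad F_2(\a,u_1,u_2) := F_1(\a,u_2,u_1).
\]
Then \eqref{F-1}, \eqref{F-crit}, \eqref{F-symm} are immediate from inspection, and \eqref{F-cont}--\eqref{F-integ} follow from straightforward differentiation on $(0,+\infty)^2$, the bound $\abs{\partial_u F_i} \le C(u_1^{2^*-2} + u_2^{2^*-2})$ being secured by the hypothesis $0 \le p < 2^*-1$. A short calculation yields
\[
  \b(\a) = 2(2^*-1-p)\,\a - (2^*-1-2p),
\]
so $\partial_\a \b = 2(2^*-1-p) > 0$ and \eqref{F-transversal} holds; solving $\b(\a^*_n) = (2n+N)(2n+N-2)/\bigl(N(N-2)\bigr)$ returns precisely the first line of~\eqref{3}, with the sub-cases $p=0$ and $p = 2/(N-2)$ specializing to~\eqref{1} and~\eqref{0b}. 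An analogous verification for $F_1(\a,u_1,u_2) = (\a u_1^2 + (1-\a) u_2^2)^{2/(N-2)} u_1$ handles~\eqref{dh} and produces the third line of~\eqref{3}. Once \eqref{F-cont}--\eqref{F-last} are in place, Theorem~\ref{teo1} applies verbatim: for every $n \in \N$ with $n \bmod 4 \in \{1,2\}$, the point $(\a^*_n, U, U)$ is a nonradial bifurcation point, a global continuum of nonradial solutions emanates from it, the Rabinowitz alternative holds, and the expansion~\eqref{num0}--\eqref{num1} governs any sequence of solutions converging to it.

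The main technical subtlety concerns~\eqref{dh}: when $\a \notin [0,1]$ the bracket $\a u_1^2 + (1-\a) u_2^2$ can vanish on $(0,+\infty)^2$, and since $2/(N-2)$ is non-integer for $N \ge 5$, the nonlinearity fails to be $C^1$ there. The verification of \eqref{F-integ} therefore requires some care in this regime, either by exploiting that \eqref{F-integ} is only demanded on a neighborhood $\mathcal A$ of a prescribed $\a$ (so that $\mathcal A$ can be restricted to a range in which the bracket stays bounded below along the trivial branch), or through the adapted functional setting working only with positive values of $u_1, u_2$ sketched in the introduction. Apart from this point, the reduction to Theorem~\ref{teo1} is a matter of elementary algebra.
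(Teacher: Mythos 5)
Your proposal is correct and follows essentially the same route as the paper: the paper gives no separate argument for this corollary, relying on the verification of \eqref{F-cont}--\eqref{F-last} and the computation of $\b(\a)$ already carried out in the proof of Corollary~\ref{result-scr}, after which Theorem~\ref{teo1} applies directly. Your remark on the possible loss of smoothness of the nonlinearity in~\eqref{dh} when the bracket vanishes (relevant since $\a^*_n>1$ for $n\ge 2$) is a point the paper passes over silently, and your suggested restriction of the parameter range is a reasonable way to handle it.
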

It is possible
to prove a similar result using more symmetries. Here we ask the
following ones:
$\forall x = (x', x_{N-m+1}, \dotsc, x_N)\in \R^{N-m}\times \R^m$,
  \begin{align*}
      & z_1(x) = z_1(\abs{x'},\pm x_{N-m+1}, \dotsc,\pm x_N),
      \text{ and }\\
      &z_2(x', x_{N-m+1}, x_N) = - z_2(|x'|, -x_{N-m+1}, \dotsc,x_N),\\
     & \cdots\\
      &z_2(x', x_{N-m+1}, x_N) = - z_2(|x'|, x_{N-m+1}, \dotsc,- x_N)
       \bigr\}.
\end{align*}
Imposing these symmetries on the functions $z_1,z_2$ defined in \eqref{mf11}, we get the following result:
\begin{theorem}\label{teo3}
  Let $1\leq m\leq N$ and let $\a^*_n$ be the unique solution
  to~\eqref{eq:degen} for some $n\geq m$. Suppose that
  \begin{equation}\label{max}
    \binom{m+\left\lfloor\frac{n-m}2\right\rfloor}{m}
    \text{ is an odd integer.}
  \end{equation}  
  Then for any $m$ there exists a
  \emph{continuum} $\mathcal{C}_m$ of nonradial solutions
  that satisfies System~\eqref{eq:system},
  bifurcating from $(\a^*_n,U,U)$ and the bifurcation is global
  and the Rabinowitz alternative holds.
  Moreover the continua $\mathcal{C}_m$ are distinct and  we have
  that, up to a subsequence, $(u_1,u_2)$ has the same expansion as in
  \eqref{num0} where
   \begin{equation}
    Z_n(x)=\sum_{h=1}^n a_h  
    \frac{r^h}{(1+r^2)^{h+\frac{N-2}2}}
    P_{n-h}^{\left(h+\frac{N-2}2, h+\frac{N-2}2\right)}
    \left(\frac{1-r^2}{1+r^2}\right)
    Y_h(\theta)    
  \end{equation}
  and  the spherical harmonics $Y_h(\theta)$ are $O(N-m)$ invariant and
  odd in the last $m$ variables.
\end{theorem}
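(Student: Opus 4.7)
The plan is to run the same bifurcation scheme used for Theorem~\ref{teo1}, only replacing the particular pair of symmetries \eqref{mf12} by the larger family prescribed before the statement. First, I would perform the change of variables \eqref{mf11} and rewrite System~\eqref{eq:system} as an equivalent system $\mathcal{F}(\a, z_1, z_2) = 0$ for the pair $(z_1, z_2)$. The equivariance relations already established for the simpler case (equations~\eqref{2.0}--\eqref{2.3-b}) carry over verbatim: swapping the sign of any of the last $m$ coordinates $x_{N-m+1}, \dotsc, x_N$ leaves the first equation invariant while flipping the sign of the second, and rotations in the first $N-m$ variables act trivially. Consequently, the symmetric subspace
\begin{equation*}
 X_m := \bigl\{(z_1,z_2) \in X : z_1 \text{ is } O(N-m)\text{-invariant and even in } x_{N-m+1},\dotsc,x_N,\ z_2 \text{ is } O(N-m)\text{-invariant and odd in each of } x_{N-m+1},\dotsc,x_N \bigr\}
\end{equation*}
is invariant for $\mathcal{F}$, and the bifurcation analysis can be performed on $X_m$.

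Next, I would compute the kernel $\mathcal{N}_m$ of $D_{z}\mathcal{F}(\a^*_n, 0, 0)$ restricted to $X_m$. By the analysis of the linearized operator (summarized in Proposition~\ref{prop-lin}), the kernel in the full space is generated by the tensor products
\begin{equation*}
 \psi_{n,h}(r) \, Y_h(\theta) := \frac{r^h}{(1+r^2)^{h+\frac{N-2}{2}}}\, P_{n-h}^{\left(h+\frac{N-2}{2},\, h+\frac{N-2}{2}\right)}\!\left(\tfrac{1-r^2}{1+r^2}\right) Y_h(\theta)
\end{equation*}
where $Y_h$ ranges over the spherical harmonics of degree $h \le n$. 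Imposing the $X_m$ symmetries selects exactly those $Y_h$ which are $O(N-m)$-invariant and odd in each of the last $m$ coordinates. Thus $\dim \mathcal{N}_m$ equals the number of such spherical harmonics with $h\le n$, which is a counting problem I would resolve by the same combinatorial lemmas developed in Sections~\ref{s3}--\ref{s4}. The key arithmetic step is to identify this dimension, modulo~$2$, with $\binom{m + \lfloor(n-m)/2\rfloor}{m} \bmod 2$; hypothesis~\eqref{max} then guarantees that $\dim \mathcal{N}_m$ is odd.

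With an odd-dimensional kernel and the transversality condition supplied by~\eqref{F-transversal}, I would invoke the Krasnoselskii--Rabinowitz global bifurcation theorem (in the version valid for compact perturbations of the identity with odd crossing number) to produce a global continuum $\mathcal{C}_m \subset X_m$ of nontrivial solutions emanating from $(\a^*_n, 0, 0)$, satisfying the Rabinowitz alternative. Going back to $(u_1, u_2)$ via \eqref{mf11} yields nonradial solutions since the $z_2$-component is, by construction, odd in at least one coordinate and therefore nontrivially non-radial. The asymptotic expansion~\eqref{num0} with the stated form of $Z_n$ follows from the Lyapunov--Schmidt decomposition: along any sequence of solutions converging to $(\a^*_n, U, U)$, normalizing by $\e_k := \|z_{2,k}\|_X$ forces the leading term to belong to $\mathcal{N}_m$, hence to be a linear combination of the $\psi_{n,h}\,Y_h$ with $h\geq 1$ odd in the last $m$ variables. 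Finally, the continua $\mathcal{C}_m$ for different values of $m$ are distinct because the invariance subgroup attached to each $\mathcal{C}_m$ differs, and any solution lying simultaneously in $\mathcal{C}_m \cap \mathcal{C}_{m'}$ with $m\neq m'$ would have to be odd in strictly more coordinates than permitted, contradicting the form of its leading term.

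The main obstacle is the counting of spherical harmonics of degree $h$ that are $O(N-m)$-invariant and odd in each of the last $m$ variables, and the reduction of the parity of this count (summed over admissible $h\le n$) to $\binom{m+\lfloor(n-m)/2\rfloor}{m}$; this is where the combinatorics of Jacobi polynomials and Lucas-type $\bmod\,2$ identities for binomial coefficients enter in an essential way and where condition \eqref{max} is really used.
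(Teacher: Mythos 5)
There is a genuine gap, and it is the device your space $X_m$ omits: the Kelvin-invariance constraint. In the paper the working space is not just the $O(N-m)$-invariant/even--odd subspace, but $\Z_m\subseteq X_k^+\times X_k^{\pm}$, where $X_k^{\pm}$ are the subspaces of functions invariant (up to sign) under the Kelvin transform~\eqref{Kelvin}. This is what kills the dilation mode $W(x)=\frac{1-|x|^2}{(1+|x|^2)^{N/2}}$ of the first linearized equation: $W$ is radial, hence even in every coordinate and $O(N-m)$-invariant, so it survives all the reflection/rotation symmetries you impose and lies in the kernel of $\partial_z\mathcal F(\a,0,0)$ restricted to your $X_m$ for \emph{every} $\a$ (it is removed only because $W\in X_k^-$ while $z_1$ is forced into $X_k^+$). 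Without this constraint the linearized operator is never invertible on your space, so neither the Leray--Schauder degree computation away from the critical values nor the crossing-number argument at $\a^*_n$ can be run, and the normalization step in the expansion (which in the paper's Proposition~\ref{mf1} needs the first equation to have \emph{only} the trivial solution in the symmetric space, to prove $\norm{z_{1,k}}\le C\norm{z_{2,k}}$) breaks down. Tied to this is a second omission: the paper must choose whether $z_2$ lives in $X_k^+$ or $X_k^-$ according to the parity of $n-k$ (via the parity of the Jacobi polynomials $P_{n-k}^{(\cdot,\cdot)}$), so that exactly the modes $W_{n,k}Y_k$ with the right parity are counted in $\gamma(n)$; your kernel count ignores this selection and also silently drops the first-component kernel altogether when you say the full kernel is spanned by the $\psi_{n,h}Y_h$.

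Separately, the combinatorial step you flag as ``the main obstacle'' is left undone, and the route you sketch (parity only, via Lucas-type identities) is not what is needed: the paper computes the kernel dimension \emph{exactly}. Lemma~\ref{l2.15} identifies the $\S_m$-invariant, odd polynomials as $x_{N-m+1}\cdots x_N\,\R[r^2,x_{N-m+1}^2,\dots,x_N^2]$, giving $\dim\Y_{m+2h}^{\S_m}(\R^N)=\binom{h+m-1}{m-1}$, and the hockey-stick identity then yields $\gamma(n)=\binom{m+\lfloor (n-m)/2\rfloor}{m}$ on the nose, so hypothesis~\eqref{max} is literally the statement that $\gamma(n)$ is odd --- no mod-$2$ reduction or Lucas theorem is involved. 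Until you (i) add the Kelvin constraint (or an equivalent way of removing the persistent dilation degeneracy) and (ii) actually carry out this exact count with the correct $X_k^{\pm}$ bookkeeping, the proposal does not yield the theorem.
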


\begin{corollary}
  Let $n\in\N$ and $\a^*_n$ as defined in Corollary~\ref{result-scr}.
  Then the same
  claims of Theorem~\ref{teo3} hold for System~\eqref{scr} and~\eqref{dh}.
\end{corollary}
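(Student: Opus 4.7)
The plan is to derive this corollary as a direct application of Theorem~\ref{teo3} by recognizing Systems~\eqref{scr} and~\eqref{dh} as particular instances of the abstract System~\eqref{eq:system}. For~\eqref{scr} I identify
\[
F_1(\a,u_1,u_2) = \a u_1^{2^*-1} + (1-\a)\, u_1^p\, u_2^{2^*-1-p},
\qquad
F_2(\a,u_1,u_2) = F_1(\a,u_2,u_1),
\]
and, analogously, for~\eqref{dh}
\[
F_1(\a,u_1,u_2) = \bigl(\a u_1^2 + (1-\a) u_2^2\bigr)^{2/(N-2)} u_1,
\qquad
F_2(\a,u_1,u_2) = F_1(\a,u_2,u_1).
\]
These are the same identifications already used for Corollaries~\ref{result-scr} and~\ref{cor:scr}, so the abstract framework is in place.

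Next I would verify the hypotheses~\eqref{F-cont}--\eqref{F-last}. Smoothness on $\R\times(0,+\infty)^2$ in~\eqref{F-cont} is immediate because both $F_i$ are smooth expressions in $u_1,u_2>0$ depending affinely on $\a$. The growth bound~\eqref{F-integ} is obtained by differentiating, noting that each first partial is $(2^*-2)$-homogeneous in $(u_1,u_2)$, and applying Young's inequality termwise so that the mixed monomials are absorbed by $u_1^{2^*-2}+u_2^{2^*-2}$ uniformly on a small neighborhood of $\a^*_n$. Conditions~\eqref{F-1}, \eqref{F-crit}~and~\eqref{F-symm} are built into the formulas by construction. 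To check~\eqref{F-transversal} and to recover the values listed in~\eqref{3}, I would compute $\b(\a)$ explicitly: for~\eqref{scr}, direct differentiation gives $\partial_{u_1}F_1(\a,1,1) = \a(2^*-1) + (1-\a)p$ and $\partial_{u_2}F_1(\a,1,1) = (1-\a)(2^*-1-p)$, so $\b(\a) = 2\a(2^*-1-p) + (2p-2^*+1)$ is affine and strictly increasing in $\a$. Imposing $\b(\a^*) = (2n+N)(2n+N-2)/(N(N-2))$ and solving for $\a^*$ yields exactly the first line of~\eqref{3}; the analogous computation for~\eqref{dh} produces the third line. Since $\b$ is a bijection $\R\to\R$, $\a^*_n$ is the \emph{unique} solution of~\eqref{eq:degen}.

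With all hypotheses verified, Theorem~\ref{teo3} applies verbatim and yields, for each $1\le m\le N$ and each $n\ge m$ for which~\eqref{max} is an odd integer, a global continuum $\mathcal{C}_m$ of nonradial solutions to~\eqref{scr} and to~\eqref{dh} bifurcating from $(\a^*_n,U,U)$, satisfying the Rabinowitz alternative and the expansion~\eqref{num0} with $Z_n$ as stated. The only verification requiring genuine care is~\eqref{F-integ}: for~\eqref{scr} with $p$ close to $2^*-1$, the exponent $2^*-2-p$ appearing in $\partial_{u_2}F_1$ becomes small or slightly negative, so Young's inequality must be applied to the monomial $u_1^p u_2^{2^*-2-p}$ with care, splitting the regions $u_1\le u_2$ and $u_1\ge u_2$ and using $0\le p<2^*-1$; for~\eqref{dh} the delicate point is that $t\mapsto t^{2/(N-2)}$ is only H\"older when $N\ge5$, so the bound on $\partial_{\a u}F_i$ is derived by an explicit case analysis in $N$. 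Apart from this bookkeeping, the proof is a straight transcription of the abstract theorem.
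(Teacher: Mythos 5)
Your proposal follows the same route as the paper: the paper proves this corollary (implicitly, via the proof of Corollary~\ref{result-scr}) simply by checking hypotheses \eqref{F-cont}--\eqref{F-last} for the explicit nonlinearities, computing $\b(\a)=2(2^*-1-p)\a-(2^*-1-2p)$ for~\eqref{scr} and $\b(\a)=\frac{8}{N-2}\a-\frac{6-N}{N-2}$ for~\eqref{dh}, solving \eqref{eq:degen} to recover \eqref{3}, and then invoking the abstract bifurcation theorem — exactly your computation. Your added caution about verifying \eqref{F-integ} for~\eqref{dh} is reasonable (the paper dismisses this as ``easy to check''), but it does not change the argument, which is correct and essentially identical to the paper's.
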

For the reader's convenience, we state the previous theorem when $m=2$.
\begin{corollary}\label{teo2}
  Let $m=2$ in Theorem \ref{teo3}. Then if
   \begin{equation}\label{max2}
 n \bmod 8 \in \{2,3,4,5\}
 \end{equation}    
  the claim of Theorem  \ref{teo3} holds and $Z_n$ in this case is given by
  \begin{equation}
    Z_n(x)=\sum_{h=1}^n a_h  
    \frac{r^h}{(1+r^2)^{h+\frac{N-2}2}}
    P_{n-h}^{\left(h+\frac{N-2}2, h+\frac{N-2}2\right)}
    \left(\frac{1-r^2}{1+r^2}\right)
    Y_h(\theta)    
  \end{equation}
  for some coefficients $a_h\in \R$, where $Y_h(\theta)$ are
  spherical harmonics which are
  $O(N-2)$ invariant and are odd with respect to $x_N$ and to
  $x_{N-1}$.
\end{corollary}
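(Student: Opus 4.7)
The plan is to obtain Corollary~\ref{teo2} as a direct specialization of Theorem~\ref{teo3} to the case $m = 2$. Once Theorem~\ref{teo3} is granted, the structural content of the corollary --- namely the existence of a global continuum of nonradial solutions bifurcating from $(\a^*_n, U, U)$, the validity of the Rabinowitz alternative, and the asymptotic expansion \eqref{num0} with a $Z_n$ built out of spherical harmonics that are $O(N-2)$-invariant and odd in both $x_{N-1}$ and $x_N$ --- is obtained verbatim from the statement of Theorem~\ref{teo3} by substituting $m = 2$. The only genuine task is therefore to show that the combinatorial hypothesis \eqref{max}, specialized to $m = 2$, is equivalent to the modular condition \eqref{max2} on~$n$.

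To this end I set $k := \lfloor (n-2)/2 \rfloor$ and study the parity of
\begin{equation*}
  \binom{m + \lfloor (n-m)/2 \rfloor}{m}\bigg|_{m=2}
  = \binom{k+2}{2}
  = \frac{(k+1)(k+2)}{2}.
\end{equation*}
Since $(k+1)(k+2)$ is always even, the quotient is odd iff $(k+1)(k+2)\equiv 2\pmod 4$. A direct inspection of the four residue classes of $k$ modulo~$4$ (or, alternatively, Kummer's theorem on the $2$-adic valuation of binomial coefficients) yields that $\binom{k+2}{2}$ is odd precisely when $k \equiv 0$ or $1 \pmod 4$, and even when $k \equiv 2$ or $3 \pmod 4$.

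Finally I translate the condition back to~$n$ through $k = \lfloor (n-2)/2 \rfloor$: running over the eight residue classes of $n$ modulo~$8$ one checks that $k \bmod 4 \in \{0,1\}$ exactly when $n \bmod 8 \in \{2,3,4,5\}$, while $k \bmod 4 \in \{2,3\}$ exactly when $n \bmod 8 \in \{6,7,0,1\}$. This establishes the equivalence between \eqref{max} at $m = 2$ and \eqref{max2}, and Corollary~\ref{teo2} then follows from Theorem~\ref{teo3}. The main obstacle is thus no obstacle at all: the substantive work --- the functional setting, the bifurcation machinery, and the determination of the invariant kernel --- has all been absorbed into Theorem~\ref{teo3}, whose proof is the genuinely difficult step.
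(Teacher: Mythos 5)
Your proof is correct, and it takes a genuinely different (and in some ways more natural) route than the paper. The paper does \emph{not} actually derive Corollary~\ref{teo2} by specializing the binomial condition \eqref{max} of Theorem~\ref{teo3}; instead, it proves the $m=2$ case \emph{first} and independently, via a direct count of $\gamma(n)$. Concretely, the paper's Proposition~\ref{p2.14} uses the decomposition Lemma~\ref{l3.1} for $\P^{\S_2}(\R^N)$ to show $\dim \Y_{2j}^{\S_2}(\R^N) = j$, then sums to get $\gamma(n) = \tfrac12\lfloor n/2\rfloor(\lfloor n/2\rfloor+1)$ and reads off the residues $n \bmod 8 \in \{2,3,4,5\}$; the proof of Corollary~\ref{teo2} is then just ``same as Theorem~\ref{teo1}, using Proposition~\ref{p2.14}.'' You instead accept Theorem~\ref{teo3} as given, substitute $m=2$ into \eqref{max} to get $\binom{\lfloor n/2\rfloor+1}{2}$, and determine its parity by an elementary case check (or Kummer). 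Your reduction is logically cleaner as a deduction of a corollary from the theorem it names, and your arithmetic is correct: with $k=\lfloor(n-2)/2\rfloor=\lfloor n/2\rfloor-1$, $\binom{k+2}{2}$ is odd iff $k \equiv 0,1 \pmod 4$, which translates to $n \bmod 8 \in \{2,3,4,5\}$. What the paper's route buys is pedagogical: the $m=2$ computation (counting invariant spherical harmonics explicitly) is presented as a warmup before the general $m$ case, making the corollary the scaffolding on which the theorem is built rather than a consequence of it. Both routes yield the same parity condition, and the formula for $Z_n$ in both cases is the immediate specialization of the kernel description.
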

We conclude by giving one more existence result which produces a nonradial
solutions for every value of $n$. These solutions are found imposing
an odd symmetry with respect to an angle in spherical coordinates and
also a periodicity assumption. They are different from the
previous ones since they have a different expansion.

\begin{theorem}\label{teo4}
  Assume \eqref{F-cont}--\eqref{F-last} and $\a^*_n$
  be the unique solution to~\eqref{eq:degen} for some $n\in\N$.
  Then for any $n\in\N$, $n \ge 2$, there exists a
  \emph{continuum} ${\mathcal D}_n$ of nonradial solutions 
  to System~\eqref{eq:system},
  bifurcating from $(\a^*_n, U,U)$. When $\e$ is small enough this
  continuum is a continuously differentiable curve
  \begin{equation}\nonumber
    (-\e_0,\e_0) \to \R \times \bigl(D^{1,2}_\rad(\R^N)\bigr)^2 :
    \e \mapsto \bigl(\a(\e), u_1(\e), u_2(\e)\bigr)
  \end{equation}
  passing through $(\a^*_n,U,U)$, i.e.,
  $\bigl(\a(0),u_1(0), u_2(0)\bigr) = (\a^*_n,U,U)$,
  such that, for all $\e \in (-\e_0,\e_0)$,
  $(u_1(\e), u_2(\e))$ is a nonradial solution to~\eqref{eq:system} 
  with $\a = \a(\e)$.
  Moreover,
  \begin{equation*}
    \begin{cases}
      u_1(\e)
      = U + \e Z_n(x) + \e \phi_{1,\e}(x),\\
      u_2(\e)
      = U - \e Z_n(x) + \e \phi_{2,\e}(x),
    \end{cases}
  \end{equation*}
  with
  \begin{equation}\label{ancoraZ}
    Z_n(r,\varphi,\Theta)
    = a \frac{r^n}{\left(1+r^2\right)^{n+\frac{N-2}2}}
    \sin(n\varphi)(\sin\theta_1)^n\cdots (\sin\theta_{N-2})^n,
    \qquad a\in \R,
  \end{equation}
  (here we use the spherical coordinates
  $(r, \varphi, \Theta) = (r,\varphi,\theta_1,\dots,\theta_{N-2})$ in
  $\R^N$). Moreover the bifurcation is global and the Rabinowitz
  alternative holds.
\end{theorem}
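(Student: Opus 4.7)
The plan is to adapt the bifurcation machinery already deployed in Theorems~\ref{radial}--\ref{teo3}, but to replace the reflection symmetries used there by a \emph{rotational} one in the angle $\varphi$. First I perform the change of unknowns \eqref{mf11} to rewrite \eqref{eq:system} as an equivalent system for $(z_1,z_2)$ around $(0,0)$. Assumption \eqref{F-symm} guarantees that this system commutes with $z_2 \mapsto -z_2$ when combined with a suitable $\varphi$-shift, which makes it legitimate to restrict to an invariant symmetric subspace. Writing points in spherical coordinates $(r,\varphi,\theta_1,\dotsc,\theta_{N-2})$, I let $X$ be the subspace of $\bigl(D^{1,2}(\R^N)\bigr)^2$ consisting of pairs $(z_1,z_2)$ such that $z_1$ is invariant under $\varphi \mapsto \varphi + 2\pi/n$, under $\varphi \mapsto -\varphi$, and under each $\theta_j \mapsto \pi - \theta_j$, while $z_2$ is \emph{antiperiodic} in $\varphi$ of period $\pi/n$ (i.e.\ $z_2(\cdot,\varphi+\pi/n,\cdot) = -z_2(\cdot,\varphi,\cdot)$), odd in $\varphi$, and invariant under each $\theta_j \mapsto \pi - \theta_j$. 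By \eqref{F-symm} and the $O(N)$-invariance of $-\Delta$, the nonlinear map associated with the transformed system preserves $X$, and the abstract bifurcation framework developed in the earlier sections applies verbatim inside $X$.

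The heart of the argument is the computation of the kernel of the linearization at $(U,U)$ inside~$X$. A direct expansion in \eqref{mf11} shows that the equation for $z_2$ decouples to $-\Delta\psi = \b(\a)\, U^{2^*-2}\psi$; via the conformal equivalence with $\IS^N$, its eigenvalues are exactly those listed in \eqref{eq:degen}, and at $\a=\a^*_n$ the eigenspace is spanned by functions of the form $\frac{r^{\ell}}{(1+r^2)^{\ell+(N-2)/2}}\, P_{k}^{(\ell+(N-2)/2,\,\ell+(N-2)/2)}\bigl(\frac{1-r^2}{1+r^2}\bigr)\, Y_\ell(\theta)$ with $k+\ell = n$, $k,\ell \ge 0$, where $Y_\ell$ is a spherical harmonic of degree~$\ell$ on $\IS^{N-1}$. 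The symmetries defining $X$ then filter this space drastically: oddness in $\varphi$ combined with $\pi/n$-antiperiodicity forces the angular $\varphi$-frequency of $Y_\ell$ to be an odd multiple of~$n$, but since the maximal $\varphi$-frequency of~$Y_\ell$ is $\ell \le n$, only $\ell = n$ with frequency exactly~$n$ is admissible. The unique such harmonic is the \emph{sectoral} one, $\Im\bigl((x_1+ix_2)^n\bigr)/r^n = \sin(n\varphi)(\sin\theta_1)^n\dotsm(\sin\theta_{N-2})^n$, which is automatically invariant under each $\theta_j \mapsto \pi-\theta_j$ since $\sin(\pi-\theta_j)=\sin\theta_j$. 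Together with the natural Kelvin/orthogonality constraint that the paper already uses to neutralise the dilation direction, this yields a \emph{one-dimensional} kernel inside $X$, spanned exactly by $Z_n$ from \eqref{ancoraZ}.

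With a one-dimensional kernel, a simple zero, and transversality following from \eqref{F-transversal} (since $\partial_\a \b > 0$), the Crandall--Rabinowitz theorem delivers the smooth local curve $\e\mapsto (\a(\e),u_1(\e),u_2(\e))$ through $(\a^*_n,U,U)$ with the stated expansion, while the oddness of the kernel dimension then permits one to invoke the Rabinowitz global alternative, exactly as for the radial branch in Theorem~\ref{radial}, to extend the local branch to a continuum~$\mathcal{D}_n$. Nonradiality along the branch is immediate: for $n\ge 2$ the sectoral harmonic has angular frequency $n\ge 2$ in the $(x_1,x_2)$-plane, so it cannot coincide with a translation of~$U$ (the case $n=1$, for which $Z_1\propto x_2$ would merely reproduce a translation of the trivial solution, is precisely what the hypothesis $n\ge 2$ excludes).

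The main obstacle is the kernel computation inside $X$: the representation-theoretic argument above is clean once framed correctly, but one must rigorously rule out the possibility that a harmonic of degree $\ell<n$ paired with a higher radial Jacobi mode ($k = n-\ell > 0$) sneaks back into the kernel; this is precisely where the $\varphi$-antiperiodicity is essential, as it annihilates every harmonic whose $\varphi$-frequency is strictly smaller than~$n$. After this point, verifying that the Kelvin constraint remains a \emph{natural} constraint on $X$ and the passage from local to global bifurcation proceed as in the proof of Theorem~\ref{radial}.
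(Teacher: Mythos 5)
Your proposal follows essentially the same strategy as the paper's Section~\ref{s4}: restrict to an invariant subspace built from a rotational symmetry of order $n$ in $\varphi$ together with the odd reflection $h_2$ and the even reflections $h_3,\dotsc,h_N$, show that the kernel of the linearization inside this subspace reduces to the single sectoral harmonic $W_{n,n}(r)\sin(n\varphi)(\sin\theta_1\cdots\sin\theta_{N-2})^n$ (with the translation modes killed by the rotation and $\theta$-flips and the dilation mode $W$ by the Kelvin constraint), and then invoke Crandall--Rabinowitz and the Rabinowitz global alternative. This is exactly what the paper does via $\S_n = \langle R_n, h_2,\dotsc,h_N\rangle$, $\sigma_n(R_n)=1$, $\sigma_n(h_2)=-1$, $\sigma_n(h_i)=1$, together with Proposition~\ref{p4} which gives $\gamma(n)=1$.

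There is, however, a genuine gap in your construction of the invariant subspace $X$: you ask that $z_1$ be $2\pi/n$-periodic in $\varphi$ but that $z_2$ be $\pi/n$-\emph{antiperiodic}. These conditions are not compatible with a single group $\S$ and sign character $\sigma$ in the sense of~\eqref{H}: $\pi/n$-antiperiodicity of $z_2$ means $\sigma(R_{2n})=-1$ for the rotation $R_{2n}$ by $\pi/n$, and then $z_1$, as the $\sigma=+1$ component, must be invariant under $R_{2n}$, i.e.\ $\pi/n$-periodic, not merely $2\pi/n$-periodic. With your weaker condition on $z_1$ the operator $T$ does \emph{not} preserve $X$: for the second component, applying $R_{2n}$ and using~\eqref{2.3-b} gives $f_2\bigl(\abs{x},\,z_1\circ R_{2n}^{-1},\,z_2\circ R_{2n}^{-1}\bigr)=-f_2\bigl(\abs{x},\,z_1\circ R_{2n}^{-1},\,z_2\bigr)$, which equals $-f_2(\abs{x},z_1,z_2)$ only if $z_1\circ R_{2n}^{-1}=z_1$. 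So the equivariance of $T$ on $X$ — the foundation of the whole bifurcation argument — fails as written. The fix is harmless but necessary: either also impose $\pi/n$-periodicity on $z_1$, or (closer to the paper) drop the antiperiodicity and require $z_2$ to be $2\pi/n$-periodic and odd in $\varphi$; the latter already forces the $\varphi$-frequency $j$ of the angular part to be a nonzero multiple of $n$, and combined with $j\le k\le n$ this isolates $j=k=n$, i.e.\ the sectoral harmonic, so the kernel computation is unchanged. Everything else in your argument — the use of the Kelvin constraint to remove $W$, the $\theta$-flips and rotation to kill the translation modes in the first equation, the one-dimensional kernel, transversality via~\eqref{F-transversal}, and the nonradiality of $Z_n$ for $n\ge 2$ — is sound and matches the paper's proof.
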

\begin{remark}
  Note that the function
  $Y_n(\varphi,\Theta) = \sin(n\varphi)(\sin\theta_1)^n\cdots
  (\sin\theta_{N-2})^n$
  is the unique spherical harmonic of order $n$ which is odd and
  periodic of period $\frac{2\pi}n$ with respect to the angle
  $\varphi$. Moreover, in Cartesian coordinates we have that
  $Y_n(x) = \Im(x_1+ix_2)^n$.
\end{remark}
\begin{corollary}
  Let $n\in\N$ and $\a^*_n$ as defined in Corollary \ref{result-scr}.
  Then the same
  claims of Theorem~\ref{teo4} hold for System~\eqref{scr} and~\eqref{dh}.
\end{corollary}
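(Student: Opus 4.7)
The plan is to apply the Crandall-Rabinowitz theorem in a symmetric subspace designed to make the kernel of the linearization at $(\a^*_n, U, U)$ one-dimensional and spanned by $Z_n$, and then extend the local curve to a global continuum via Rabinowitz' alternative. I would first use the change of variables~\eqref{mf11} to rewrite~\eqref{eq:system} as a system for $(z_1, z_2)$; thanks to~\eqref{F-symm}, this system is invariant under $z_2 \mapsto -z_2$ and under the action of $O(N)$ on $\R^N$. Following the general scheme announced in the introduction, I would work in the Kelvin-invariant subspace (which turns the Lagrange multiplier killing the dilation direction of the critical equation into a natural constraint), intersected with the space $X_n$ of pairs $(z_1, z_2)$ that are $O(N-2)$-invariant in the last $N-2$ Cartesian coordinates, $(2\pi/n)$-periodic in the azimuthal angle $\varphi$, with $z_1$ even and $z_2$ odd under $\varphi \mapsto -\varphi$. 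By the invariance noted above, the nonlinear operator associated with~\eqref{eq:system} preserves $X_n$.

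The crucial step is computing the kernel of the linearization at $(0,0)$ restricted to $X_n$. The linearization decouples: the $z_1$-equation $-\Delta z_1 = (2^*-1) U^{2^*-2} z_1$ is $\a$-independent and its kernel in $D^{1,2}(\R^N)$ is spanned by the dilation and translation directions of the Aubin-Talenti family; within $X_n$, Kelvin invariance eliminates the dilation direction while the rotational and periodic symmetries eliminate the translations (provided $n\ge 2$). The $z_2$-equation reads $-\Delta z_2 = \b(\a) U^{2^*-2} z_2$; at $\a = \a^*_n$, its eigenfunctions decompose by Proposition~\ref{prop-lin} over spherical-harmonic modes $h \in \{0, \dots, n\}$ with radial factor $r^h (1+r^2)^{-h-(N-2)/2} P_{n-h}^{(h+(N-2)/2, h+(N-2)/2)}((1-r^2)/(1+r^2))$. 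A spherical harmonic of degree $h$ in $X_n$ is a harmonic polynomial depending only on $x_1, x_2$ and $|x|^2$; writing such a polynomial as $\sum c_{a,b,c}(x_1+ix_2)^a(x_1-ix_2)^b |x|^{2c}$ with $a+b+2c = h$, the $(2\pi/n)$-periodicity in $\varphi$ forces $a-b \equiv 0 \pmod{n}$. Since $|a-b| \le a+b \le h \le n$, the only nontrivial possibility is $h = n$ with $(a,b) \in \{(n,0),(0,n)\}$, and oddness in $\varphi$ selects $\Im(x_1+ix_2)^n$. The corresponding radial factor is $P_0 \equiv 1$, yielding exactly $Z_n$ as in~\eqref{ancoraZ}, so the kernel in $X_n$ is one-dimensional.

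With this one-dimensional kernel and the transversality condition $\partial_\a \b(\a^*_n) > 0$ from~\eqref{F-transversal}, Crandall-Rabinowitz produces the continuously differentiable curve through $(\a^*_n, U, U)$ with the expansion stated. For the global alternative I would recast the problem in $(z_1, z_2)$ as a fixed-point equation $(z_1, z_2) = T(\a; z_1, z_2)$ with $T$ a compact map on $X_n$, using the same critical-embedding arguments carried out for Theorem~\ref{radial}; the odd-dimensional kernel yields an odd change in the Leray-Schauder index at $\a^*_n$, so Rabinowitz' theorem produces the continuum $\mathcal D_n$ with the stated alternative. Solutions along the branch are nonradial near the bifurcation point because $Z_n$ depends nontrivially on $\varphi$.

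The main obstacle I anticipate is the kernel characterization: the three symmetries defining $X_n$ (rotation, $(2\pi/n)$-periodicity, reflection) together with Kelvin invariance must jointly isolate a single harmonic polynomial at $h = n$; in particular the $h = 0$ radial mode, which survives all the other symmetries and lies in the classical degenerate kernel of Theorem~\ref{radial}, is killed solely by the oddness requirement in $\varphi$, so the bookkeeping must be done with care. Producing a genuinely compact fixed-point formulation on $X_n$ in the setting where the $F_i$ need not be differentiable at the boundary of $(0,+\infty)^2$ is a further delicate point that relies on the functional-analytic framework the authors have already set up.
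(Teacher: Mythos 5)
Your proposal is, in substance, a re-derivation of Theorem~\ref{teo4} for the abstract system~\eqref{eq:system}; but the statement you were asked to prove is the corollary that transfers Theorem~\ref{teo4} to the concrete systems~\eqref{scr} and~\eqref{dh}, and that transfer is exactly what your argument never carries out. Everywhere you invoke \eqref{F-symm} (to get the $z_2\mapsto -z_2$ invariance of the transformed system) and \eqref{F-transversal} (to get transversality in Crandall--Rabinowitz), you are assuming the abstract hypotheses; for the corollary one must check that the specific nonlinearities of~\eqref{scr} and~\eqref{dh} satisfy \eqref{F-cont}--\eqref{F-last}, compute $\b(\a)$ for them (in the paper: $\b(\a)=2(2^*-1-p)\a-(2^*-1-2p)$ for~\eqref{scr} and $\b(\a)=\frac{8}{N-2}\a-\frac{6-N}{N-2}$ for~\eqref{dh}, both affine and increasing in $\a$, so \eqref{F-transversal} holds), and verify that \eqref{eq:degen} is solved precisely by the values $\a^*_n$ listed in~\eqref{3}. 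This verification is the entire content of the corollary beyond Theorem~\ref{teo4} (it is done once in the proof of Corollary~\ref{result-scr} and then simply cited), and its absence is the genuine gap in your write-up: as it stands, nothing in your proof connects to~\eqref{scr} or~\eqref{dh} or to the explicit formula for $\a^*_n$.

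The part of your proposal that re-proves Theorem~\ref{teo4} is essentially sound and close to the paper's Section~\ref{s4}: same symmetry class (Kelvin invariance, $2\pi/n$-periodicity in $\varphi$, $z_2$ odd in $\varphi$, invariance in the remaining variables), same conclusion that the kernel is one-dimensional and spanned by $Z_n$ with $W_{n,n}(r)$ as radial factor, then Crandall--Rabinowitz plus the degree-theoretic global alternative. Your kernel computation differs mildly in technique: you count harmonic polynomials in $x_1+ix_2$, $x_1-ix_2$, $\abs{x}^2$ and use $\abs{a-b}\le h\le n$ to isolate $\Im(x_1+ix_2)^n$, whereas the paper expands the spherical harmonics in Gegenbauer polynomials (Proposition~\ref{p4}); your version is arguably cleaner, and your use of full $O(N-2)$-invariance instead of the generated reflection group is harmless since $Z_n$ lies in the smaller space. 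Two small points to tighten: nonradiality should be argued along the whole continuum (if $u_1,u_2$ were radial then $z_2=u_1-u_2$ would be radial and odd in $\varphi$, hence $z_2\equiv0$ and $u_1=u_2=U$, a contradiction), not only near the bifurcation point via the leading term; and the compactness/functional-analytic setting you defer to is precisely Lemmas~\ref{lemma1} and~\ref{lemma-comp}, which should be cited rather than re-anticipated as an obstacle.
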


\begin{remark}
  It is difficult to give a formula with the exact number of solutions
  which takes in account all the previous theorems. Here we describe a
  particular case: choose $n=4$ in \eqref{eq:degen} and $N\ge4$ then
  we have the existence of at least {\em five} solutions bifurcating
  by $(U,U)$ as follows:
  \begin{enumerate}
  \item[\textit{i})] one radial solution (Theorem \ref{radial}),
  \item[\textit{ii})] one nonradial solution with $z_1$ even in all the
    coordinates and $z_2$ odd with respect to $x_{N-1}$ and $x_N$ and
    even in other coordinates (Corollary \ref{teo2}),
  \item[\textit{iii})] one nonradial solution with $z_2 $ odd with
    respect to $x_{N-3},\dots,x_N$ and even in other coordinates
    (Theorem \ref{teo3} with $m=3$),
  \item[\textit{iv})] one nonradial solution in $\R^N$ with $N\geq 4$ with
    $z_2 $ odd with respect to $x_{N-4},\dots,\linebreak[2] x_N$
    and even in other
    coordinates (Theorem \ref{teo3} with $m=4$),  
  \item[\textit{v})] one nonradial solution where $z_1$ and $z_2$ are
    periodic of period $\frac{2\pi}4$ with respect to the angle
    $\varphi$ and $z_2$ is odd in $\varphi$ (Theorem \ref{teo4}).
  \end{enumerate}
  In the following table, which does not pretend to be exhaustive, we
  show the number of solutions bifurcating from $(U,U)$ arising from
  Theorems \ref{radial}--\ref{teo4}.\vskip0.2cm
  \begin{equation*}
    \begin{array}{r|ccc}
      &N=3&N=4&N=5\\ \hline
      n=2&4&4&4\\ \rowcolor[gray]{0.85}
      n=3&4&4&4\\
      n=4&4&5&5\\ \rowcolor[gray]{0.85}
      n=5&4&5&6\\
      n=6&3&4&5\\ \rowcolor[gray]{0.85}
      n=7&2&3&3
    \end{array}
  \end{equation*}
\end{remark}
\par
\begin{remark}
  Note that the our results for System \eqref{0b} hold for any
  dimension $N\geq 3$, extending some recent results of~\cite{GLW}.
  Finally, as observed in~\cite{GLW}, when the dimension $N\ge 4$,
  System~\eqref{0b} becomes linear or sublinear in some of its
  components and this fact produces problem in defining and estimating
  the linearization.  In some sense, we can say that the bifurcation
  theory suits well this problem.
  We remark moreover that the  solutions founded in~\cite{GLW}  are
  always different from ours since their expansion is of the following
  type $u_1=U+\e \phi_1$ and $u_2=\sum_k U_{\delta_k,y_k}+\e \phi_2$.
\end{remark}

The paper is organized as follows: in Section \ref{s2} we recall some
preliminaries and introduce the functional setting to find the
nonradial solution. In Section \ref{s3} we define the symmetric spaces
and prove Theorems \ref{radial}, \ref{teo1} and \ref{teo3}. In Section \ref{s4} we prove Theorem \ref{teo4}. 

\section{Preliminary results and the functional setting}
\label{s2}

To study System~\eqref{eq:system}, we perform the following change of variables
\begin{equation}\label{2.0}
  \begin{cases}
    z_1 = u_1 + u_2 - 2U, \\
    z_2 = u_1 - u_2,
  \end{cases}
\end{equation}
that turns \eqref{eq:system} into 
the system
\begin{equation}
  \label{2}
  \begin{cases}
    -\Delta z_1 = f_1(|x|,z_1,z_2) & \text{in }\R^N,\\
    -\Delta z_2 = f_2(|x|,z_1,z_2) & \text{in }\R^N,\\
    z_1,z_2\in D^{1,2}(\R^N),
  \end{cases}
\end{equation}
where
\begin{align}
  f_1(|x|,z_1,z_2)
  &:= F_1\Bigl(\a, U+\frac{z_1+z_2}{2},\ U+\frac{z_1-z_2}{2} \Bigr)
    \nonumber\\
  &\hspace{4em}
    + F_2\Bigl(\a, U+\frac{z_1+z_2}{2},\ U+\frac{z_1-z_2}{2} \Bigr)
    - 2 U^{2^*-1}  ,
    \label{f1}\\
  f_2(|x|,z_1,z_2)
  &:= F_1\Bigl(\a, U+\frac{z_1+z_2}{2},\ U+\frac{z_1-z_2}{2} \Bigr)
    \nonumber\\
  &\hspace{4em}
    - F_2\Bigl(\a, U+\frac{z_1+z_2}{2},\ U+\frac{z_1-z_2}{2} \Bigr).
    \label{f2}
\end{align}
One important feature in looking for nonradial solutions is that, using~\eqref{F-symm},  this
change of variables gives the following
invariance:
\begin{equation}\label{2.3-b}
  \begin{split}
    f_1(|x|,z_1,-z_2) &= f_1(|x|,z_1,z_2),\\
    f_2(|x|,z_1,-z_2) &= -f_2(|x|,z_1,z_2).
  \end{split}
\end{equation}
Solutions to~\eqref{eq:system} are zeros of the operator
\begin{equation*}
  T(\a,z_1,z_2)
  :=
  \begin{pmatrix}
    z_1 - (-\Delta)^{-1} \bigl(f_1(|x|,z_1,z_2)\bigr)\\[2\jot]
    z_2 - (-\Delta)^{-1} \bigl(f_2(|x|,z_1,z_2)\bigr)  
  \end{pmatrix}.
\end{equation*}
Clearly, $T(\a, 0,0) = (0,0)$ for all $\a \in \R$
(thanks to \eqref{F-1} and \eqref{F-crit}).
A necessary condition for the bifurcation is that the linearized operator $\partial _zT(\alpha, 0,0)$ is not invertible. 
This corresponds to study the system: 
\begin{equation}
  \label{prima-linearization}
  \begin{cases}
    -\Delta w_1 = \frac{\partial f_1}{\partial z_1}(|x|,0,0) \, w_1
    + \frac{\partial f_1}{\partial z_2}(|x|,0,0) \, w_2
    & \text{in }\R^N,\\[1\jot]
    -\Delta w_2=\frac{\partial f_2}{\partial z_1}(|x|,0,0) \, w_1
    + \frac{\partial f_2}{\partial z_2}(|x|,0,0) \, w_2
    & \text{in }\R^N,\\[1\jot]
    w_1,w_2 \in D^{1,2}(\R^N).
  \end{cases}
\end{equation}
A simple computation shows 
\begin{align*}
  \frac{\partial f_1}{\partial z_1}(\alpha,0,0)
  &=\frac 12\left[\frac{\partial F_1}{\partial u_1}(\alpha,U,U)
    + \frac{\partial F_1}{\partial u_2}(\alpha,U,U)
    + \frac{\partial F_2}{\partial u_1}(\alpha,U,U)
    + \frac{\partial F_2}{\partial u_2}(\alpha,U,U)\right],\\[2\jot]
  \frac{\partial f_1}{\partial z_2}(\alpha,0,0)
  &=\frac 12\left[\frac{\partial F_1}{\partial u_1}(\alpha,U,U)
    - \frac{\partial F_1}{\partial u_2}(\alpha,U,U)
    + \frac{\partial F_2}{\partial u_1}(\alpha,U,U)
    - \frac{\partial F_2}{\partial u_2}(\alpha,U,U)\right],
\end{align*}
and a very similar expression holds for $\frac{\partial f_2}{\partial z_i}(\alpha,0,0)$ for $i=1,2$. First observe that from \eqref{F-symm} we get
\[\frac{\partial F_1}{\partial u_1}(\alpha,U,U)=\frac{\partial F_2}{\partial u_2}(\alpha,U,U)\quad \text{ and } \quad \frac{\partial F_1}{\partial u_2}(\alpha,U,U)=\frac{\partial F_2}{\partial u_1}(\alpha,U,U).\]
Then, differentiating \eqref{F-crit} with
respect to $\lambda$ we get
\[(\partial_{u_1}F_1 + \partial_{u_2}F_1)(\a, U,U) = (2^*-1) U^{2^*-2}
F_1(\a, 1, 1) = \frac{N+2}{N-2} U^{\frac{4}{N-2}}.\]
Moreover, using again~\eqref{F-crit}:
\begin{equation*}
  \partial_{u_j}F_i(\a, \lambda u_1, \lambda u_2)
  = \lambda^{2^*-2} \partial_{u_j}F_i(\a,  u_1,  u_2) 
  \quad
  \text{for } i = 1,2 \text{ and } j = 1,2,
\end{equation*}
and in particular
\[\partial_{u_j}F_i(\a, U, U) = U^{2^*-2} \, \partial_{u_j}F_i(\a, 1,1).\]
Putting together all these remarks, it is straightforward that system
\eqref{prima-linearization} becomes
\begin{equation}
  \label{linearization}
  \begin{cases}
    -\Delta w_1 = \frac{N+2}{N-2} \, U^{\frac{4}{N-2}} w_1
    & \text{in }\R^N,\\[1\jot]
    -\Delta w_2=\b(\a) \, U^{\frac{4}{N-2}} w_2
    & \text{in }\R^N,\\[1\jot]
    w_1,w_2 \in D^{1,2}(\R^N),
  \end{cases}
\end{equation}
with $\b(\a)$ defined in~\eqref{F-transversal}.

System \eqref{linearization} is degenerate for any $\a$, since the
problem is invariant by translations and dilations. Indeed, it is well
known that the first equation admits the solutions
$W(x):=\frac{1-|x|^2}{\left(1+|x|^2\right)^{N/2}}$ and
$W_i(x)=\frac {\partial U}{\partial x_i}$ for $i=1,\dots, N$.
The second equation instead has solutions if and only if $\b(\a) $ is
an eigenvalue of the linearized equation of the classical critical
problem at the standard bubble $U$.  Using the classification of the
eigenvalues and eigenfunctions in \cite[Theorem 1.1]{GGT}, one gets that
the second equation admits nontrivial solutions if and only if
$\b(\a) = \l_n \frac{N+2}{N-2}$
with $\l_n := \frac{(2n+N-2)(2n+N)}{N(N+2)}$\label{lambda_n}
for some $n\in \N$.  So
we have the following classification result for~\eqref{linearization}.
\begin{proposition}
  \label{prop-lin}
  Let $\beta_n$ be given by
  \begin{equation}
    \label{eq:bn}
    \b_n := \frac{(2n+N)(2n+N-2)}{N(N-2)} .
  \end{equation}
  \begin{itemize}
  \item[i)] When $\b(\a) \ne \b_n$ for all $n \in \N$, all solutions 
    to \eqref{linearization} are given~by
    \begin{equation}\label{primo-caso}
      (w_1,w_2)
      = \left(\sum_{i=1}^N a_i\frac{\partial U}{\partial x_i} + b W, \
        0\right)
    \end{equation}
    for some real constants $a_1,\dots,a_N,b$, where $W$ is the radial
    function defined by
    \begin{equation}
      \label{eq:W}
      W(x) :=\frac 1 d \left( x\cdot \nabla U + \frac{N-2}{2} U\right)
      =  \frac{1 - \abs{x}^2}{(1 + \abs{x}^2)^{N/2}}
    \end{equation}
    with $d := \tfrac{1}{2} N^{(N-2)/4} (N-2)^{(N+2)/4}$.
  \item[ii)] When $\b(\a) = \beta_n$ for some $n \in \N$, all solutions 
    to \eqref{linearization} are given by
    \begin{equation}
      (w_1,w_2)
      = \left(\sum_{i=1}^N a_i\frac{\partial U}{\partial x_i} + b W, \
        \sum_{k=0}^n A_k W_{n,k}(r) Y_k(\theta) \right)
    \end{equation}
    for some real constants $a_1,\dots,a_N,b,
    A_{0},\dots,A_n$, where
    $W_{n,k}$ are  
    \begin{equation}
    \label{sol-lin}
    W_{n,k}(r)
    := \frac{r^k}{(1+r^2)^{k+\frac{N-2}2}}
    P_{n-k}^{\left(k+\frac{N-2}2,\thinspace k+\frac{N-2}2\right)}
      \left(\frac{1-r^2}{1+r^2}\right)
  \end{equation}
  for $k=0,\dots,n$. Here, as usual, $Y_k(\theta)$ denotes a
  spherical harmonic related to the eigenvalue $k(k+N-2)$ and
  $P_j^{(a, b)}$ are the Jacobi polynomials.
  \end{itemize}
\end{proposition}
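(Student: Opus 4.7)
The essential observation is that system \eqref{linearization} is completely \emph{decoupled}: the equation for $w_1$ involves only $w_1$ and the equation for $w_2$ involves only $w_2$. Hence the proof reduces to classifying independently the solutions of the two scalar linear problems
\begin{equation}\label{eq:plan-scalar}
  -\Delta w = \mu\, U^{\frac{4}{N-2}} w \quad\text{in } \R^N,
  \qquad w \in D^{1,2}(\R^N),
\end{equation}
for $\mu = \tfrac{N+2}{N-2}$ (giving $w_1$) and for $\mu = \b(\a)$ (giving $w_2$). My plan is to handle both through a single spectral analysis.

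First I would write the candidate solution in separated form via spherical harmonics, $w(x) = \sum_{k \ge 0} \phi_k(r) Y_k(\theta)$, and reduce \eqref{eq:plan-scalar} to the radial eigenvalue equation
\begin{equation*}
  -\phi_k'' - \frac{N-1}{r} \phi_k' + \frac{k(k+N-2)}{r^2}\phi_k
  = \mu\, U^{\frac{4}{N-2}} \phi_k .
\end{equation*}
Then I would invoke \cite[Theorem 1.1]{GGT}, which says that nontrivial $D^{1,2}$-solutions of this radial problem exist precisely when $\mu = \l_n \tfrac{N+2}{N-2}$ with $\l_n = \tfrac{(2n+N-2)(2n+N)}{N(N+2)}$ for some integer $n \ge k$, in which case $\phi_k$ is (up to scaling) the function $W_{n,k}$ in \eqref{sol-lin} built from the Jacobi polynomial $P_{n-k}^{(k+(N-2)/2,\, k+(N-2)/2)}$. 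Note that with our normalization $\mu = \l_n \tfrac{N+2}{N-2}$ is exactly the same as $\mu = \b_n = \tfrac{(2n+N-2)(2n+N)}{N(N-2)}$, so the resonance condition matches \eqref{eq:bn}.

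Applying this to the first equation: since $\mu = \tfrac{N+2}{N-2}$ corresponds to $n=1$ in the classification above, the admissible indices are $k=0$ (giving the radial mode $W_{1,0}$, which up to a constant is exactly $W$ in \eqref{eq:W}, as can be checked by differentiating the scale invariance $U_{\delta,0}|_{\delta=1}$) and $k=1$ (giving $N$ first-order spherical harmonics; after multiplication by the corresponding $W_{1,1}(r)$ one recovers, up to constants, the partial derivatives $\partial U/\partial x_i$, as predicted by translation invariance). This yields \eqref{primo-caso}. For the second equation, resonance occurs precisely when $\b(\a) = \b_n$ and then the admissible spherical-harmonic orders are exactly $k = 0, 1, \dots, n$, each contributing the mode $W_{n,k}(r) Y_k(\theta)$.

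The only slightly delicate step is matching the two solutions $W$ and the derivatives $\partial U/\partial x_i$ arising from translation/dilation invariance with the Jacobi-polynomial formulas $W_{1,0}$ and $W_{1,1}$ of \cite{GGT}, i.e., verifying the correct normalization constant $d$ in \eqref{eq:W}. This is a direct computation using $x\cdot\nabla U + \tfrac{N-2}{2} U = -\tfrac{d}{d\delta}U_{\delta,0}|_{\delta=1}$ evaluated against the explicit form \eqref{sol} of $U$; it produces the prefactor $d = \tfrac12 N^{(N-2)/4}(N-2)^{(N+2)/4}$. Combining the two classifications yields the claim, with the case split \textit{i)}/\textit{ii)} depending only on whether $\b(\a)$ hits one of the resonant values $\b_n$.
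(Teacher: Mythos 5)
Your proposal is correct and follows essentially the same route as the paper: the system \eqref{linearization} is decoupled, and both scalar equations $-\Delta w = \mu\, U^{4/(N-2)} w$ are classified by the spectral result of \cite[Theorem 1.1]{GGT}, the first equation being the resonant case $\mu=\b_1=\tfrac{N+2}{N-2}$ whose kernel is spanned by $W$ and $\partial U/\partial x_i$, the second being nontrivial exactly when $\b(\a)=\b_n$ with modes $W_{n,k}(r)Y_k(\theta)$. The only cosmetic difference is that the paper quotes the first equation's kernel as the well-known non-degeneracy of the bubble rather than deriving it as the $n=1$ instance of the same classification, which is what you do.
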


In \cite{GGT} we restricted to the radial
functions and  since the kernel of the second equation in \eqref{linearization}
at the values $\b_n$ is  one dimensional,
Crandall-Rabinowitz' Theorem allowed us to prove the bifurcation
result.  In the nonradial setting, the
kernel of the second equation in~\eqref{linearization}
is very rich.  We prove a bifurcation result using the Leray
Schauder degree, when this kernel has an odd dimension.

Of course, in this case, we need some compactness
of the operator $T$.  Since we seek positive solutions to
System~\eqref{eq:system} and the maximum principle does not apply,
the standard space $D^{1,2}(\R^N)$ does not seem to be the best one.  For this reason we use a suitable weighted
functional space.  Set
\begin{equation*}
  D := \Bigr\{u\in L^{\infty}(\R^N) \Bigm|
  \sup\limits_{x\in\R^N}\frac{|u(x)|}{U(x)}<+\infty \Bigr\}
\end{equation*}
endowed with the norm $\norm{u}_D :=
\sup_{x\in\R^N}\frac{|u(x)|}{U(x)}$ and define
\begin{equation}
  \label{eq:defX}
  X := D^{1,2}(\R^N)\cap D.
\end{equation}
Then $X$ is a Banach space when
equipped with the norm $\norm{u}_X := \max\{\norm{u}_{1,2},
\norm{u}_D\}$ where $\norm{u}_{1,2} = (\int_{\R^N} \abs{\nabla u}^2)^{1/2}$
  is the classical norm in $D^{1,2}(\R^N)$.

\begin{definition}
  Let us denote by $\X$ the space
  \begin{equation*}
    \X := \bigl\{ (z_1, z_2) \in X^2 \bigm|
    \exists \delta > 0,\   \abs{z_2} \le (2-\delta)U + z_1 \bigr\}
  \end{equation*}
  and define the operator
  \begin{equation*}
    T : \R \times \X \to X\times X
  \end{equation*}
  as
  \begin{equation}
    \label{T}
    T(\a,z_1,z_2) :=
    \begin{pmatrix}
      z_1 - (-\Delta)^{-1} \bigl(f_1(|x|,z_1,z_2) \bigr) \\[2\jot]
      z_2 - (-\Delta)^{-1} \bigl(f_2(|x|,z_1,z_2) \bigr)
    \end{pmatrix} .
  \end{equation}
\end{definition}

\medskip

Note that if $(z_1,z_2) \in \X$, both quantities $U+\frac{z_1+z_2}2$
and $U+\frac{z_1-z_2}2$ are positive so that
$F_i(\a, U+\frac{z_1+ z_2}{2}, U+\frac{z_1- z_2}{2})$ are well defined
on $\R^N$ and $C^1$.  Moreover, $\X$ is an open subset of $X^2$.

The zeros of the operator $T$ correspond to the solutions to
System~\eqref{eq:system}.  
As said before, Problem~\eqref{eq:system} is degenerate for any $\a$.
To overcome this degeneracy we will use some symmetry and invariance
properties. The solutions we will find will inherit the
symmetry and the invariance.
To overcome the degeneracy of the first
equation in~\eqref{linearization}, which is due to the scale
invariance of the problem, we use the \emph{Kelvin transform} $k(z)$
of $z$, namely
\begin{equation}\label{Kelvin}
  k(z)(x) := \frac{1}{|x|^{N-2}} \, z\left(\frac x{|x|^2}\right)
\end{equation}
and we denote by $X_k^{\pm }\subseteq X$ the subset of functions in
$X$ which are invariant (up to the sign) by a Kelvin transform, i.e.
\begin{equation}\label{def-X_k}
  X_k^{+} := \{z\in X \mathrel| k(z) = z \}
  \quad\text{and}\quad
  X_k^{-} := \{z\in X \mathrel| k(z) = -z \} .
\end{equation}
Observe that $U\in X_k^+$, $W \in X_k^-$ and, using the fact that
the Jacobi polynomials $P_j^{(a, b)}$ are \emph{even} if $j$
is even and \emph{odd} if $j$ is odd, an easy computation shows that
$W_{n,k} \in X_k^+$ if $n-k$ is \emph{even} while
$W_{n,k} \in X_k^-$ if $n-k$ is \emph{odd}.

\medskip
 
First we prove some properties of the operator $T$.

\begin{lemma}\label{lemma1}
  The operator $T$ given by~\eqref{T} is well defined and continuous
  from $\R\times \X$ to $X^2$.  Moreover, $\partial_\a T$, $\partial_z
  T$ and $\partial_{\a z} T$ exist and are continuous.
  Finally, $T$ maps
  $\R \times \bigl(\X \cap (X_k^+ \times X_k^{\pm}) \bigr)$ to
  $X_k^+\times X_k^{\pm}$.
\end{lemma}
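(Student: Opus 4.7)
The plan is to write $T = \id - S$ where $S(\a, z_1, z_2) := \bigl((-\Delta)^{-1} f_1(|x|, z_1, z_2),\ (-\Delta)^{-1} f_2(|x|, z_1, z_2)\bigr)$ and to verify each claim for $S$. The cornerstone of the argument will be a pointwise estimate of the form $|f_i(|x|, z_1, z_2)| \le C\, U(x)^{2^*-1}$ on $\R \times \X$, together with analogous bounds for $\partial_\a f_i$, $\partial_z f_i$ and $\partial_{\a z} f_i$, with constants depending only on $\norm{z_1}_X$, $\norm{z_2}_X$ and on the margin~$\delta$ in the definition of $\X$.

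To establish this, first note that on $\X$ the constraint $|z_2| \le (2-\delta) U + z_1$ gives $U + (z_1 \pm z_2)/2 \ge \tfrac{\delta}{2} U > 0$, while $|z_i| \le \norm{z_i}_D U$ yields $U + (z_1 \pm z_2)/2 \le C_1 U$. Combining \eqref{F-crit} and \eqref{F-1} (which forces $\partial_\a F_i(\a, 1, 1) = 0$), integrating the bounds \eqref{F-integ} along rays from $(1, 1)$, and then rescaling via homogeneity, one obtains $|F_i(\a, u_1, u_2)| + |\partial_\a F_i(\a, u_1, u_2)| \le C(u_1^{2^*-1} + u_2^{2^*-1})$ on any cone $c_1 \le u_i/u_j \le c_2$. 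Inserting this into \eqref{f1}--\eqref{f2} produces the desired bounds on $f_i$ and its derivatives. Since $-\Delta U$ is a positive multiple of $U^{2^*-1}$, convolution with the Newtonian kernel yields $|(-\Delta)^{-1} g| \le C \norm{g/U^{2^*-1}}_\infty U$, so that $(-\Delta)^{-1} f_i \in D$; moreover $|f_i|^{2N/(N+2)} \le C\, U^{2^*} \in L^1$, and classical Sobolev duality puts $(-\Delta)^{-1} f_i$ in $D^{1,2}(\R^N)$. Continuity and differentiability of $T$ then follow from Lebesgue dominated convergence applied to $(\a, z) \mapsto f_i(|x|, z)$, which is pointwise continuous and $C^1$ by \eqref{F-cont}, viewed as a map into the weighted space $\{g : \norm{g/U^{2^*-1}}_\infty < \infty\}$, and then composed with the continuous linear map $(-\Delta)^{-1}$ from that space into $X$.

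For the Kelvin equivariance, I would use the classical identity $-\Delta k(v)(x) = |x|^{-(N+2)} (-\Delta v)(x/|x|^2)$, i.e.\ $k \circ (-\Delta)^{-1} = (-\Delta)^{-1} \circ \widetilde k$ with $\widetilde k(g)(x) := |x|^{-(N+2)} g(x/|x|^2)$. Since $k(U) = U$ and $(N-2)(2^*-1) = N+2$, homogeneity \eqref{F-crit} gives, for $(z_1, z_2) \in X_k^+ \times X_k^\pm$,
\[\widetilde k\Bigl[F_i\bigl(\a,\, U + \tfrac{z_1 + z_2}{2},\, U + \tfrac{z_1 - z_2}{2}\bigr)\Bigr] = F_i\bigl(\a,\, U + \tfrac{z_1 \pm z_2}{2},\, U + \tfrac{z_1 \mp z_2}{2}\bigr),\]
with $+$ when $z_2 \in X_k^+$ and $-$ when $z_2 \in X_k^-$. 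When $z_2 \in X_k^+$ there is no swap and $\widetilde k(f_j) = f_j$ for $j = 1, 2$; when $z_2 \in X_k^-$ the sign flip permutes the two arguments of $F_i$, and assumption \eqref{F-symm} then interchanges $F_1 \leftrightarrow F_2$, which preserves $f_1 = F_1 + F_2 - 2U^{2^*-1}$ and reverses the sign of $f_2 = F_1 - F_2$. Applying $(-\Delta)^{-1}$ transports this equivariance to $S$, and subtracting the Kelvin-invariant pair $(z_1, z_2)$ concludes. The main obstacle is the uniform pointwise control of $f_i$ near the boundary of $\X$, which is precisely why the open margin $\delta$ appears in the definition of $\X$; once this estimate is established, the rest reduces to standard applications of dominated convergence together with the mapping properties of $(-\Delta)^{-1}$ in the weighted space $X$.
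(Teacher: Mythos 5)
Your proposal is correct and follows essentially the same route as the paper's proof: the pointwise bound $\abs{f_i(\abs{x},z_1,z_2)}\le C\,U^{2^*-1}$ obtained from \eqref{F-integ} together with the homogeneity \eqref{F-crit}, the comparison with $(-\Delta)^{-1}(U^{2^*-1})=cU$ to land in $D$, the $L^{2N/(N+2)}\to D^{1,2}$ mapping for the gradient part, and the Kelvin equivariance derived from \eqref{F-crit} and \eqref{F-symm} (the paper phrases your conjugation identity $k\circ(-\Delta)^{-1}=(-\Delta)^{-1}\circ\widetilde{k}$ through uniqueness of weak solutions in $D^{1,2}$, and gets the bound on $F_i$ by integrating along rays from the origin rather than from $(1,1)$, but these are cosmetic differences). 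The one step to tighten is your justification of continuity of $(\a,z)\mapsto f_i$ into the weighted space with norm $\norm{g/U^{2^*-1}}_\infty$: dominated convergence cannot give sup-norm convergence; as in the paper, one should use that convergence $z_n\to z$ in $D$ means $z_n/U\to z/U$ uniformly on $\R^N$, so the arguments $1+\frac{z_{1,n}\pm z_{2,n}}{2U}$ converge uniformly while staying in a compact subset of $(0,+\infty)^2$ (thanks to the margin $\delta$ in $\X$), where $F_i$ is uniformly continuous — dominated convergence is only the right tool for the $L^{2N/(N+2)}$, hence $D^{1,2}$, part.
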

\begin{proof}
  First notice that, \eqref{F-crit} implies
  $\lim_{\lambda\to 0} F_i(\a, \lambda u_1, \lambda u_2) = 0$.
  Thus, using~\eqref{F-integ}, one gets
  \allowdisplaybreaks
  \begin{align}
    \abs{F_i(\a, u_1, u_2)}
    &= \biggabs{\int_0^1
      \partial_\lambda\bigl( F_i(\a, \lambda u_1, \lambda u_2) \bigr)
      \intd \lambda}  \nonumber\\
    &\le \int_0^1 \bigabs{\partial_{u_1} F_i(\a, \lambda u_1, \lambda u_2)[u_1]
      + \partial_{u_2} F_i(\a, \lambda u_1, \lambda u_2)[u_2]}
      \intd\lambda \nonumber\\
    &\le C(u_1^{2^*-2} + u_2^{2^*-2}) (u_1 + u_2) \nonumber\\
    &\le C(u_1^{2^*-1} + u_2^{2^*-1}).
    \label{eq:F-integ2}
  \end{align}
  (Different occurrences of $C$ may denote different constants.)
  Given that $z_1,z_2$ and $U$ belong to $X$, \eqref{eq:F-integ2} implies
  that 
  $\bigabs{F_i\bigl(\alpha, U + \frac{z_1+z_2}{2},
    U + \frac{z_1-z_2}{2}\bigr)}\leq CU^{2^*-1}$
  and thus, using \eqref{f1} and \eqref{f2},
  \begin{equation*}
    \abs{f_i(\abs{x}, z_1, z_2)} \le CU^{2^*-1}\quad \text{ for }i=1,2.
  \end{equation*}
  Then $f_i(\abs{x}, z_1, z_2)$ belong to $L^{\frac{2N}{N+2}}(\R^N)$ and  there exists a unique
  $g_i\in D^{1,2}(\R^N)$ for $i=1,2$ such that $g_i$ is a weak 
  solution to 
    \begin{equation}\label{max3}
    -\Delta g_i = f_i(|x|,z_1,z_2)\quad\hbox{in }\R^N.
      \end{equation}
  The solution $g_i$ enjoys the following representation:
  \begin{equation}\nonumber\label{g_i}
    g_i(x) = \frac 1{\omega_N(N-2)}\int_{\R^N}
    \frac 1{|x-y|^{N-2}} f_i(\abs{y}, z_1,z_2)   \intd y
  \end{equation}
  where $\omega_N$ is the area of the unit sphere in $\R^N$.  
  This implies
  \begin{equation*}
    \left|g_{i}(x)\right|
    \le C\int_{\R^N}\frac 1{|x-y|^{N-2}}U^{2^*-1}(y) \intd y
    = CU(x)
  \end{equation*}
  and $g_i\in X$ showing that $T$ is well defined from $\X$ to
  $X\times X$.

  Next we have to show that the operator $T$ maps Kelvin invariant (up
  to a sign) functions into functions that are Kelvin invariant (with
  the same sign). It is enough to show that
  $\bigl( (-\Delta)^{-1} (f_1(|x|,z_1,z_2)), (-\Delta)^{-1}
  (f_2(|x|,z_1,z_2)) \bigr)$
  maps $\X \cap (X_k^+ \times X_k^{\pm})$ into
  $X_k^+\times X_k^{\pm} $.  Assume
  $(z_1,z_2)\in \X \cap (X_k^+ \times X_k^{\pm})$ and let, as before,
  $g_i = (-\Delta)^{-1}(f_i(|x|,z_1,z_2))$. Then $g_i\in X$
  is a weak solution to \eqref{max3} and
  letting $\widetilde g_i := k(g_i)$, the Kelvin transform of $g_i$ we
  have that $\widetilde g_i$ weakly solves
  \begin{equation*}
    -\Delta \widetilde g_i
    = - \frac{1}{\abs{x}^{N+2}}  \Delta g_i\Bigl(\frac{x}{\abs{x}^2}\Bigr)
    = \frac{1}{|x|^{N+2}} \,
    f_i\left(\frac{x}{|x|^2}, z_1\Bigl(\frac x{|x|^2}\Bigr),
      z_2\Bigl(\frac{x}{|x|^2}\Bigr) \right)
  \end{equation*}
  An easy consequence of \eqref{F-crit} is that
  \begin{align*}
    & \hspace{-2cm}\frac{1}{|x|^{N+2}} \,
      F_i\left(\alpha, \Bigl(U + \frac{z_1+z_2}{2}\Bigr)
      \Bigl(\frac{x}{|x|^2}\Bigr) ,
      \Bigl(U + \frac{z_1-z_2}{2}\Bigr)\Bigl(\frac{x}{|x|^2}\Bigr)
      \right)\\[1\jot]
    &=F_i\Bigl(\alpha,k(U)+\frac{k(z_1)+k(z_2)}{2},
      k(U)+\frac{k(z_1)-k(z_2)}{2}\Bigr).
  \end{align*}
  This, together with the fact that $U$ and $z_1$ are Kelvin invariant
  while $z_2$ is Kelvin invariant up to a sign (depending which space
  $X_k^{\pm}$ we are dealing with) shows that
  \begin{equation*}
    \frac{1}{|x|^{N+2}} \,
    f_i\left(\frac{x}{|x|^2}, z_1\Bigl(\frac x{|x|^2}\Bigr),
      z_2\Bigl(\frac{x}{|x|^2}\Bigr) \right)
    = f_i\bigl(|x|,z_1(x),\pm z_2(x)\bigr)
  \end{equation*}
  where $\pm$ depends on the space $X_k^{\pm}$ we consider.  Then, using
  \eqref{2.3-b}, it follows that
  \begin{equation*}
    \frac{1}{|x|^{N+2}} \,
    f_1\left(\frac 1{|x|}, z_1\Bigl(\frac x{|x|^2}\Bigr),
      z_2\Bigl(\frac x{|x|^2}\Bigr) \right)
    = f_1\bigl(|x|,z_1(x),z_2(x)\bigr)
  \end{equation*}
  while
  \begin{equation*}
    \frac 1{|x|^{N+2}} \,
    f_2\left(\frac{x}{|x|^2}, z_1\Bigl(\frac x{|x|^2}\Bigr),
      z_2\Bigl(\frac x{|x|^2}\Bigr)\right)
    = \pm f_2\bigl(|x|,z_1(x),z_2(x)\bigr).
  \end{equation*}
  This implies that $\widetilde g_1$ weakly solves
  $-\Delta \widetilde g_1=f_1(|x|,z_1(x),z_2(x))$ and $\widetilde g_2$
  solves $-\Delta \widetilde g_2=\pm f_2(|x|,z_1(x),z_2(x))$.  The
  uniqueness of solutions in $D^{1,2}(\R^N)$ then implies
  $ \widetilde g_1 = g_1$ and $ \widetilde g_2 = \pm g_2$ which shows
  that $g_1\in X_k^+$ and $g_2\in X_k^{\pm}$.  This concludes the
  first part of the proof.

  \medskip

  Let us now prove the continuity of $T$ on $\R \times \X$.  Let
  $\a_n \to \a$ in $\R$ and $(z_{1,n}, z_{2,n}) \to (z_1, z_2)$ in
  $\X$ as $n \to \infty$, and set
  \begin{equation*}
    g_{i,n} := (-\Delta)^{-1} f_{i,n} 
    \quad\text{where }
    f_{i,n}(x)
    := f_i(|x|,z_{1,n},z_{2,n})
    \text{ with } \a = \a_n.
  \end{equation*} 
  Since $z_{i,n} \to z_i$ in $D^{1,2}(\R^N)$, the convergence also
  holds in $L^{2^*}(\R^N)$.  Using \eqref{eq:F-integ2} and Lebesgue's dominated
  convergence theorem and its converse, one deduces that
  $f_{i,n} \to f_i$ in $L^{\frac{2N}{N+2}}$.  Therefore
  $g_{i,n} \to g_i$ in $D^{1,2}$ and $T(\a_n, z_n) \to T(\a, z)$
  in~$D^{1,2}$. Now let us show the convergence in $D$.  We have that
  \begin{equation}
    \begin{split}
      \frac{|g_{i,n}(x)-g_i(x)|}{U(x)}
      &\le
      \frac 1{\omega_N(N-2)U(x)}\int_{\R^N}
      \frac 1{|x-y|^{N-2 }}\frac{|f_{i,n}(y)-f_i(y)|}{U(y)^{2^*-1}}
      \, U(y)^{2^*-1} \intd y\\
      &\le C\sup\limits_{y\in\R^N} \frac{|f_{i,n}(y)-f_i(y)|}{U(y)^{2^*-1}}.
    \end{split}
  \end{equation}
  Moreover, using \eqref{F-crit}, one gets
  \begin{align*}
    \frac{\abs{f_{i,n}(y) - f_i(y)}}{U(y)^{2^*-1}}
    \le \sum_{i=1}^2
    \Bigl| & F_i\Bigl(\alpha_n, 1 + \frac{z_{1,n} + z_{2,n}}{2 U},\
             1 + \frac{z_{1,n} - z_{2,n}}{2 U} \Bigr) \\
           & \hspace{3em}
             - F_i\Bigl(\alpha, 1 + \frac{z_1 + z_2}{2 U},\
             1 + \frac{z_1 - z_2}{2 U} \Bigr) \Bigr|
  \end{align*}
  Thanks to the convergence in $D$, ${z_{j,n}}/{U} \to
  {z_{j}}/{U}$ uniformly for $j=1,2$.  Thus $1 + \frac{z_{1,n} \pm
    z_{2,n}}{2 U} \to 1 + \frac{z_{1} \pm z_{2}}{2 U}$ uniformly on~$\R^N$.
  The continuity of the maps $F_i$ then imply that both terms of the
  sum converge uniformly to~$0$.

  The existence and continuity of the derivatives is proved in a
  similar way.
\end{proof}

Next we show a compactness result for the operator
$(z_1,z_2) \mapsto \left((-\Delta)^{-1}f_1,(-\Delta)^{-1}f_2 \right)$.  Here
we need some
decay estimates on solutions of a semilinear elliptic equation.
\begin{lemma}[\cite{ST}]\label{lemma-ST}
  If  $0 < p < N$ and $h$ is a non negative, radial function
  belonging to $L^1(\R^N)$, then
  \begin{equation*}
    \int_{\R^N} \frac{h(y)}{|x-y|^{p}} \intd y
    = O\biggl( \frac{1}{\abs{x}^{p}} \biggr)
    \quad \text{as } |x| \to +\infty.
  \end{equation*}
\end{lemma}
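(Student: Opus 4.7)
The plan is to set $R := |x|$ and partition $\R^N$ into three regions based on $|y|$ relative to $R$: the inner piece $A_1 := \{|y| \le R/2\}$, the outer piece $A_3 := \{|y| \ge 2R\}$, and the middle annulus $A_2 := \{R/2 < |y| < 2R\}$. The contributions of $A_1$ and $A_3$ are handled immediately by the triangle inequality. On $A_1$, $|x - y| \ge R - |y| \ge R/2$, so the integral is bounded by $(2/R)^p \|h\|_1$. On $A_3$, $|x-y| \ge |y| - R \ge |y|/2 \ge R$, giving a bound of $\|h\|_1/R^p$. Both are $O(1/R^p)$, as desired.

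The substantive contribution is over $A_2$, where the singularity of the kernel at $y = x$ lies inside the domain of integration. Here I exploit the radial symmetry of $h$ by passing to spherical coordinates $r = |y|$:
\[
\int_{A_2} \frac{h(y)}{|x-y|^p} \intd y
= \int_{R/2}^{2R} h(r)\, r^{N-1}\, \Phi(R,r) \intd r,
\qquad
\Phi(R, r) := \int_{\IS^{N-1}} \frac{\intd\sigma(\omega)}{|R e_1 - r\omega|^p},
\]
where $e_1$ is any fixed unit vector. Using the identity $|Re_1 - r\omega|^2 = (R - r)^2 + 4Rr \sin^2(\theta/2)$ (with $\theta$ the polar angle of $\omega$ measured from $e_1$), I split the $\theta$-integration according to whether $(R-r)^2$ or $4Rr\sin^2(\theta/2)$ dominates the expression in the denominator; a scaling argument then yields a scale-invariant control of $r^{N-1}\Phi(R,r)$. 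The assumption $p < N$ is essential precisely here: it is exactly the condition that makes the Riesz kernel $|x-y|^{-p}$ locally integrable in $\R^N$, and integrating in $r$ against $h(r)$ converts this volume integrability into an absorption of the $r=R$ singularity of $\Phi$, producing a bound of order $\|h\|_1/R^p$ over~$A_2$.

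Summing the three pieces yields the claim. The main obstacle is clearly the middle-annulus estimate: the kernel singularity sits in the interior of $A_2$ and one is allowed to use only the global $L^1$-norm of $h$. Radial symmetry is what collapses the integral to one dimension in $r$, while the bound $p < N$ is what renders the remaining angular integral $\Phi(R, r)$ integrable in $r$ across the coincidence $r = R$; once these two ingredients are in place, the rest of the argument is bookkeeping of constants.
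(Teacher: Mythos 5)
The paper does not actually prove this lemma: it is imported from the cited reference (Siegel--Talvila), so there is no internal argument to compare against, and I assess your proposal on its own terms. Your bounds on $A_1$ and $A_3$ are correct. The genuine gap is on the middle annulus $A_2$, where your argument is only a sketch and, more seriously, the conclusion you announce there---``a bound of order $\|h\|_{L^1}/R^p$''---is false when $N-1<p<N$, which is the harder part of the admissible range. The mechanism you invoke (``integration in $r$ against $h(r)$ absorbs the $r=R$ singularity of $\Phi$'') fails for a general $L^1$ function: writing $\delta:=|R-r|$, for $p>N-1$ one computes $\Phi(R,r)\sim C\,\delta^{\,N-1-p}/R^{\,N-1}$ near $r=R$, which is $r$-integrable across the diagonal (the exponent $N-1-p$ exceeds $-1$) but \emph{unbounded}, and an $L^1$ function may concentrate precisely at that singularity. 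Concretely, put $L^1$-mass $2^{-k}$ of $h$ on a spherical shell of radius $R_k:=2^k$ and thickness $\epsilon_k$; then $h\in L^1(\R^N)$, but taking $|x|=R_k$ and integrating only over $B(x,\epsilon_k/4)$ yields
\[
  |x|^p\int_{\R^N}\frac{h(y)}{|x-y|^p}\intd y
  \ \gtrsim\ 2^{-k}\,\epsilon_k^{\,N-1-p}\,R_k^{\,p-N+1}
  \ \longrightarrow\ \infty
\]
once $\epsilon_k$ is taken small enough. So no angular decomposition of $A_2$ can produce a bound depending only on $\|h\|_{L^1}$ when $N-1<p<N$, and the ``bookkeeping of constants'' you defer is in fact where the proof would break.

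What rescues the statement in the paper's application is that $h$ there is a power of $U$, hence radially nonincreasing with $h(r)\lesssim r^{-N}$ as $r\to\infty$---and indeed any radially nonincreasing $h\in L^1(\R^N)$ satisfies such a bound. With that pointwise decay the $A_2$ estimate is immediate and requires no angular analysis at all: for $y\in A_2$ one has $|y|\ge R/2$, hence $h(y)\le CR^{-N}$, and $|x-y|<3R$, so $\int_{A_2}h(y)|x-y|^{-p}\intd y\le CR^{-N}\int_{|x-y|<3R}|x-y|^{-p}\intd y=C'R^{-p}$. You should add a decay hypothesis of this kind (closer to what Siegel--Talvila actually assume) and use it on the middle annulus; the $A_1$ and $A_3$ estimates you give then complete the argument.
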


Now we can prove our compactness result:

\begin{lemma}\label{lemma-comp}
  For all $\a$, the operator
  \begin{equation}
    M(z_1,z_2)
    := \left((-\Delta)^{-1} f_1(|x|,z_1,z_2) ,
      (-\Delta)^{-1}f_2(|x|,z_1,z_2) \right)
  \end{equation}
  is compact from $\X$ to $X^2$.
\end{lemma}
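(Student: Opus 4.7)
The plan is to extract a subsequence of $(z_{1,n},z_{2,n})$ bounded in $\mathcal X$, show that the associated $g_{i,n} := (-\Delta)^{-1}f_{i,n}$ converge in both components of the norm of $X$, namely $\|\cdot\|_{1,2}$ and $\|\cdot\|_D$, and reduce everything to dominated convergence plus an Arz\'ela--Ascoli argument compactified via the Kelvin transform.

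\textbf{Step 1 (Weak and pointwise limit).} Given a bounded sequence in $\mathcal X$, the estimates $\norm{z_{i,n}}_{1,2}\le C$ and $\abs{z_{i,n}}\le C U$ hold uniformly. By reflexivity of $D^{1,2}(\R^N)$ together with Rellich--Kondrachov on balls and a diagonal extraction, one may assume (up to subsequence) $z_{i,n}\rightharpoonup z_i$ weakly in $D^{1,2}$ and $z_{i,n}\to z_i$ almost everywhere on $\R^N$. The continuity of the $F_i$ from \eqref{F-cont} then gives $f_{i,n}(x)\to f_i(x) := f_i(\abs x,z_1,z_2)$ pointwise a.e.

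\textbf{Step 2 (Convergence in $D^{1,2}$).} The bound $\abs{f_{i,n}(y)}\le C\, U^{2^*-1}(y)$, obtained exactly as in \eqref{eq:F-integ2}, provides a dominant in $L^{2N/(N+2)}(\R^N)$. Hence Lebesgue's dominated convergence theorem yields $f_{i,n}\to f_i$ in $L^{2N/(N+2)}$, and the continuity of $(-\Delta)^{-1}\colon L^{2N/(N+2)}\to D^{1,2}$ gives $g_{i,n}\to g_i$ in $D^{1,2}$.

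\textbf{Step 3 (Convergence in $D$).} The goal is $(g_{i,n}-g_i)/U\to 0$ uniformly on $\R^N$. Fix $R>0$ and split $\R^N = \overline{B_R}\cup(\R^N\setminus\overline{B_R})$. On $\overline{B_R}$, the equation $-\Delta g_{i,n}=f_{i,n}$ with $\abs{f_{i,n}}\le C U^{2^*-1}$ locally bounded and the $D^{1,2}$-bound give, by standard $W^{2,p}$/$C^{1,\a}$ elliptic estimates, equicontinuity of $\{g_{i,n}\}$ on $\overline{B_{R+1}}$; Arz\'ela--Ascoli together with the $D^{1,2}$-convergence identify the limit and produce uniform convergence on $\overline{B_R}$, hence, since $U$ is bounded below there, uniform convergence of $(g_{i,n}-g_i)/U$. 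For $\abs x>R$ we exploit the Kelvin transform: setting $\widetilde g_{i,n}=k(g_{i,n})$ and using \eqref{F-crit} together with the computation of Lemma \ref{lemma1}, one checks that $-\Delta\widetilde g_{i,n}=\widetilde f_{i,n}$ with $\abs{\widetilde f_{i,n}}\le C U^{2^*-1}$, and that $g_{i,n}(x)/U(x)$ for $\abs x$ large is comparable to $\widetilde g_{i,n}(y)/U(y)$ for $\abs y=1/\abs x$ small. Applying the same elliptic regularity argument now on a neighborhood of the origin yields uniform convergence of $(\widetilde g_{i,n}-\widetilde g_i)/U$, hence of $(g_{i,n}-g_i)/U$ on $\{\abs x>R\}$. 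Combining the two regions closes the proof. Lemma \ref{lemma-ST} enters implicitly: it ensures the pointwise bound $\int U^{2^*-1}(y)/\abs{x-y}^{N-2}\intd y = O(U(x))$ used to absorb the tails uniformly before Kelvin-transforming.

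\textbf{Main obstacle.} The genuinely delicate point is Step 3, upgrading $D^{1,2}$-convergence to uniform convergence of the ratio $g_{i,n}/U$ on the \emph{entire} $\R^N$, including uniformity at infinity. The Kelvin transform is the key device: it turns the asymptotic behavior at infinity into a regularity problem near the origin, where Arz\'ela--Ascoli applies just as on compact sets. All other steps are essentially soft (dominated convergence plus continuity of $(-\Delta)^{-1}$).
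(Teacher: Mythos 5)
Your proof is correct, but the key step (convergence in the $D$-norm) follows a genuinely different route from the paper's. The paper does not use elliptic regularity, Arz\'ela--Ascoli or the Kelvin transform here: it estimates $\abs{g_{i,n}(x)-g_i(x)}/U(x)$ directly from the Riesz-potential representation, inserting the weight $U^{2^*-1-\e}$ and applying H\"older with $\e q=2^*$; the kernel factor is controlled by Lemma \ref{lemma-ST} (this is \eqref{cl1}, and is where that lemma actually enters --- not, as you suggest, in the bound $\int U^{2^*-1}(y)\abs{x-y}^{2-N}\intd y\le CU(x)$, which is just the identity $(-\Delta)^{-1}U^{2^*-1}=cU$ already used in Lemma \ref{lemma1}), while the remaining factor $\bigl(\int\abs{(f_{i,n}-f_i)/U^{2^*-1}}^{q}\,U^{2^*}\bigr)^{1/q}$ tends to $0$ by dominated convergence, uniformly in $x$. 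This gives an explicit quantitative bound for $\norm{g_{i,n}-g_i}_D$ with no further subsequence extraction. Your alternative --- interior $W^{2,p}/C^{1,\a}$ estimates plus Arz\'ela--Ascoli on a fixed ball, and the Kelvin transform to convert the behaviour at infinity into a regularity problem near the origin, where the ratio is exactly preserved since $k(g)(y)/U(y)=g(x)/U(x)$ for $y=x/\abs{x}^2$ and the transformed right-hand side is again bounded by $CU^{2^*-1}$ --- is also valid; the only points to make explicit are that the weak equation for $k(g_{i,n})$ extends across the origin (zero capacity of a point; the paper uses the same fact implicitly in Lemma \ref{lemma1}) and that equicontinuity from interior estimates on $B_{R+1}$ is obtained on $\overline{B_R}$, not on $\overline{B_{R+1}}$. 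Your route costs extra subsequence extractions and limit identifications (harmless for compactness), but avoids the weighted-H\"older trick and is in the spirit of the paper's use of Kelvin invariance; note that Lemma \ref{lemma-ST} is then not needed at all in your argument.
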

\begin{proof}
  \textit{1.\@}
  From Lemma~\ref{lemma1}, we have that
  $M : \X \to X^2$ is continuous.
  Now let $(z_n) = (z_{1,n},z_{2,n})$ be a bounded sequence in
  $\X$ and let us prove that, up to a subsequence,
  $g_n := M(z_n)$ converges strongly to some $g\in X\times X$.  On one
  hand, since $(z_n)$ is bounded in $D^{1,2}\times D^{1,2}$, going if
  necessary to a subsequence, one can assume that $(z_n)$ converges
  weakly to some $z=(z_1,z_2)$ in $D^{1,2}\times D^{1,2}$ and
  $z_n \to z$ almost everywhere.  On the other hand,
  $( \norm{z_n}_{D\times D} )$ is also bounded which means that
  $\abs{z_{i,n}} \le C U$ where $C$ is independent of $i$ and $n$ and
  so, using \eqref{eq:F-integ2},
  $\abs{f_i(\abs{x}, z_n)} \le C U^{2^*-1}$.  Lebesgue's dominated
  convergence theorem then implies that $ f_i(\abs{x}, z_n)$ converges strongly
  to $f_i(\abs{x}, z)$ in $L^{\frac{2N}{N+2}}$ for $i=1,2$.  From the
  continuity of $(-\Delta)^{-1} : L^{\frac{2N}{N+2}} \to D^{1,2}$, one
  concludes that $g_n \to g$ in $D^{1,2}\times D^{1,2}$.  The
  inequality $\abs{z_{i,n}} \le C U$ also implies
  \begin{equation*}
    \abs{g_{i,n}(x)}
    \le C \int_{\R^N} \frac1{|x-y|^{N-2}} \, \abs{f_i(z_n(y))} \intd y
    \le C \int_{\R^N}\frac{U^{2^*-1}(y)}{|x-y|^{N-2}} \intd y
    = C  \, U(x),
  \end{equation*}
  and passing to the limit yields $g_i \in D$.

  \textit{2.\@} It remains to show that $\norm{g_n - g}_{D\times D} \to 0$.
  First, H\"older's inequality allows to get the estimate:
  \begin{align*}
    \hspace{2em}
    &\hspace{-2em}
      \abs{g_{i,n}(x)-g_i(x)}\\
    &\le C\int_{\R^N} \frac{1}{|x-y|^{N-2}} \,
      \bigabs{f_i(\abs{y}, z_n(y))-f_i(\abs{y}, z(y))} \intd y
    \\
    &= C\int_{\R^N} \frac{U^{2^*-1-\e}(y)}{|x-y|^{N-2}} \,
      \frac{\bigabs{f_i(\abs{y}, z_n(y))-f_i(\abs{y}, z(y))}}{
      U^{2^*-1-\e}(y)} \intd y\\
    \displaybreak[2]
    &\le C \left(\mkern 5mu \int_{\R^N}
      \left|\frac{U^{\frac{N+2}{N-2}-\e}(y)}{|x-y|^{N-2}}\right|^{\frac{q}{q-1}}
      \right)^{\frac{q-1}q}
      \left(\mkern 5mu \int_{\R^N}
      \left|\frac{\bigabs{f_i(\abs{y}, z_n(y))-f_i(\abs{y}, z(y))}}{
      U^{2^*-1}(y)} \, U^{\e}(y)  \right|^q\right)^{\frac 1q}
  \end{align*}
  where $\e > 0$ will be chosen small and $q > 1$ large such that
  $\e q= 2^*$.
  Note that \eqref{eq:F-integ2} implies
  $\abs{f_i(\abs{y}, z(y))} \le C (U + \abs{z_1} + \abs{z_2})^{2^* -
    1} + C U^{2^*-1}$ and so the ratio in the right integral is bounded
  on~$\R^N$.  Thus the integrand of the right integral is bounded by
  $C^q\, U^{\e q}(y) \le C U^{2^*}(y) \in L^1(\R^N)$ where $C$
  is independent of~$n$.  Lebesgue's dominated convergence theorem
  then implies that this integral converges to $0$ as $n \to \infty$.

  The proof will be complete if we show:
  \begin{equation}\label{cl1}
    \begin{split}
      \int_{\R^N}
      \left|\frac {U^{\frac{N+2}{N-2}-\e}(y)}{|x-y|^{N-2 }}
      \right|^{\frac{q}{q-1}} \intd y
      \le \frac{C}{(1+|x|)^{(N-2)\frac q{q-1}}}
      = C U^{\frac{q}{q-1}}(x).
    \end{split}
  \end{equation}
  This inequality follows from Lemma \ref{lemma-ST} because
  $h := U^{\left(\frac{N+2}{N-2}-\e\right) \frac{q}{q-1}} \in L^1(\R^N)$
  i.e., $(N-2) \bigl( \frac{N+2}{N-2}-\e \bigr) \frac{q}{q-1} > N$,
  and $(N-2) \frac{q}{q-1} < N$
  are possible if $\e$ is small enough and
  $q$ is large enough.
\end{proof}
\section{The role of symmetries}
\label{s3}

The operator $T$ is a compact perturbation of
the identity and, as proved in Lemma~\ref{lemma1}, maps  $\R \times
\bigl(\X \cap (X_k^+ \times X_k^{\pm}) \bigr)$ into
  $X_k^+\times X_k^{\pm}$.

We want to find solutions to our problem as zeroes of $T$ and we will
use the bifurcation theory. As explained in the introduction, we want
to find both radial and nonradial solutions. In particular, to obtain
the nonradial ones, we use some symmetry properties of the operator $T$
that can be obtained by \eqref{2.3-b}.

We
state the definition in a general way and we will then apply to some
specific cases so to obtain different solutions. Let us introduce some
notations.  Let $\mathcal{S}$ be a subgroup of $O(N)$, where $O(N)$ is
the orthogonal group of $\R^N$, and let
\begin{equation}\label{X-S}
  X_{\S} := \bigl\{v\in X_k^+ \bigm| \forall s\in \S,\ \forall x \in \R^N,\
  v(s^{-1}(x)) = v(x)      \bigr\}
\end{equation}
be the set of functions invariant by the action of $\S$.
Let $\sigma : \S \to \{-1,1\}$ be a group morphism and define a second
action of $\S$ on $X$ by
\begin{equation}
  \label{eq:odd-action}
  (s \acto v)(x) := \sigma(s) \, v\bigl(s^{-1}(x)\bigr).
\end{equation}
The invariant subspace of $X_k^+ \times X_k^{\pm}$ of interest is
\begin{equation}\label{H}
  \begin{split}
    \Z : = \bigl\{z = (z_1, z_2) \in X_k^+\times X_k^{\pm} \bigm|
    \forall s\in \S,\  &z_1(s^{-1}(x)) = z_1(x) \text{ and }\\
    &\sigma(s)\, z_2(s^{-1}(x)) = z_2(x)
    \thinspace \bigr\}.
  \end{split}
\end{equation}
Then we can prove the following result:
\begin{lemma}
  The operator $T$ defined in \eqref{T} maps $\R \times
  (\X \cap \Z)$ into $\Z$.
\end{lemma}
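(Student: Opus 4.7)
By Lemma~\ref{lemma1} we already know that $T$ sends $\R \times \bigl(\X \cap (X_k^+ \times X_k^{\pm})\bigr)$ into $X_k^+ \times X_k^{\pm}$, so the only thing left to verify is the $\S$-equivariance: if $(z_1,z_2) \in \X \cap \Z$ and $(g_1, g_2) := \bigl((-\Delta)^{-1} f_1(|x|,z_1,z_2),\,(-\Delta)^{-1} f_2(|x|,z_1,z_2)\bigr)$, then for every $s \in \S$ one has $g_1(s^{-1}x) = g_1(x)$ and $\sigma(s)\,g_2(s^{-1}x) = g_2(x)$. The overall strategy mimics the Kelvin-invariance argument in Lemma~\ref{lemma1}: I will show that the transported functions satisfy exactly the same PDE as $g_1$ and $g_2$, and then invoke uniqueness of $D^{1,2}$-solutions to conclude.

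First I fix $s \in \S \subseteq O(N)$ and introduce $\widetilde g_1(x) := g_1(s^{-1}x)$ and $\widetilde g_2(x) := \sigma(s)\,g_2(s^{-1}x)$. Since $s$ is an orthogonal transformation, the Laplacian commutes with precomposition by $s^{-1}$, and $|s^{-1}x| = |x|$. Using the assumption $(z_1,z_2) \in \Z$, i.e.\ $z_1(s^{-1}x) = z_1(x)$ and $z_2(s^{-1}x) = \sigma(s)\,z_2(x)$, a direct computation gives
\begin{equation*}
  -\Delta \widetilde g_1(x)
  = f_1\bigl(|x|,\,z_1(x),\,\sigma(s) z_2(x)\bigr),
  \qquad
  -\Delta \widetilde g_2(x)
  = \sigma(s)\, f_2\bigl(|x|,\,z_1(x),\,\sigma(s) z_2(x)\bigr).
\end{equation*}

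The key step is now to eliminate the factor $\sigma(s)$ inside $f_1$ and $f_2$ by means of the parity relations~\eqref{2.3-b}. When $\sigma(s) = 1$ there is nothing to check. When $\sigma(s) = -1$, \eqref{2.3-b} yields $f_1(|x|,z_1,-z_2) = f_1(|x|,z_1,z_2)$ and $f_2(|x|,z_1,-z_2) = -f_2(|x|,z_1,z_2)$, and the extra minus sign is absorbed into the prefactor $\sigma(s)$ in the expression for $-\Delta \widetilde g_2$. In either case one obtains
\begin{equation*}
  -\Delta \widetilde g_1 = f_1(|x|,z_1,z_2) = -\Delta g_1,
  \qquad
  -\Delta \widetilde g_2 = f_2(|x|,z_1,z_2) = -\Delta g_2.
\end{equation*}
Since $\widetilde g_i$ and $g_i$ all belong to $D^{1,2}(\R^N)$ (precomposition by an isometry preserves this space), uniqueness of solutions to $-\Delta u = h$ in $D^{1,2}(\R^N)$ forces $\widetilde g_1 = g_1$ and $\widetilde g_2 = g_2$, which is exactly the $\S$-invariance required by~\eqref{H}.

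Finally, since $z_1$ is $\S$-invariant and $z_2$ satisfies $\sigma(s)\,z_2(s^{-1}x) = z_2(x)$ by hypothesis, the same symmetries are inherited by the differences $z_1 - g_1$ and $z_2 - g_2$, so $T(\a,z_1,z_2) \in \Z$. There is no genuine obstacle here: the argument is essentially bookkeeping on the sign character $\sigma$, and the only care needed is to match the two cases $\sigma(s) = \pm 1$ with the two lines of~\eqref{2.3-b}.
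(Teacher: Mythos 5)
Your proof is correct and follows essentially the same route as the paper: the equivariance of $f_1,f_2$ coming from~\eqref{2.3-b} together with the fact that $(-\Delta)^{-1}$ commutes with the orthogonal action (which you verify via uniqueness of $D^{1,2}$ solutions, exactly as the paper does for the Kelvin transform in Lemma~\ref{lemma1}), plus the already-proved Kelvin statement to ensure membership in $X_k^+\times X_k^{\pm}$.
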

\begin{proof}
  We will show that $T = (T_1, T_2)$ is equivariant under the action
  of $\S$, namely
 \begin{align*}
    T_1\bigl(\alpha, z_1(s^{-1}(x)),  \sigma(s)z_2 (s^{-1}(x))\bigr)
    &=T_1\bigl(\alpha, z_1(x), z_2 (x)\bigr) , \\
    \text{and}\quad
   T_2\bigl(\alpha, z_1(s^{-1}(x)),  \sigma(s)z_2 (s^{-1}(x))\bigr)
    &=\sigma(s) \, T_2\bigl(\alpha, z_1(x), z_2 (x)\bigr).
  \end{align*}
  Let $z = (z_1, z_2) \in \X$.  First, notice that,
  thanks to~\eqref{2.3-b}, the
  functions $f_1$ and $f_2$ defined in~\eqref{f1}--\eqref{f2} satisfy
  \begin{align*}
    f_1\bigl(\abs{x}, z_1(s^{-1}(x)),  \sigma(s)z_2 (s^{-1}(x))\bigr)
    &=  f_1(\abs{x}, z(x)) , \\
    \text{and}\quad
    f_2\bigl(\abs{x}, z_1(s^{-1}(x)),  \sigma(s)z_2 (s^{-1}(x))\bigr)
    &= \sigma(s) f_2(\abs{x}, z(x)).
  \end{align*}
  Second, because the Laplacian is equivariant under the action of the
  group $O(N)$, it readily follows that 
  $(-\Delta)^{-1} \bigl(\sigma(s)
  f(s(x))\bigr) = \sigma(s) \bigl((-\Delta)^{-1} f(s(x))\bigr)$ for any $\sigma$, $s\in\S$ and $f \in L^{2N/(N+2)}$.

  Putting these  observations together concludes the proof.
\end{proof}

\begin{lemma}\label{lin-invert}
  Assume $\b(\a) \ne \b_n$ for all $n \in \N$,
  with $\b_n$ be as defined in \eqref{eq:bn},
  and that the subspace of solutions in $X_\S$ to the first equation
  of~\eqref{linearization} has only the trivial solution.
  Still denote
  $T$ the operator defined in~\eqref{T} restricted to $\X \cap \Z$.
  Then the linear map $\partial_z T(\a, 0,0) : \Z \to \Z$ is invertible,
  where $\partial_z T(\a, 0,0)$ is the Fr\'echet derivative of
  $T$ with respect to $z$ at $(\a,0,0)$.
\end{lemma}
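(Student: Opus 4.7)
My plan is to invoke the Riesz--Schauder theory of compact perturbations of the identity. By construction, $T(\a, z) = z - M(\a, z)$ with $M$ the operator from Lemma~\ref{lemma-comp}. Since $M$ is compact (Lemma~\ref{lemma-comp}) and $C^1$ in $z$ (Lemma~\ref{lemma1}), its Fr\'echet derivative $L := \partial_z M(\a, 0, 0)$ is automatically a compact linear operator (the derivative of a $C^1$ compact map is compact). Moreover, $\Z$ is a closed linear subspace of $X^2$, and the equivariance argument already used to show that $T$ maps $\X \cap \Z$ into $\Z$ applies verbatim to its linearization at $(0,0)$, so $L$ sends $\Z$ into $\Z$. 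Therefore $\partial_z T(\a, 0, 0) = I - L$ is a compact perturbation of the identity on $\Z$, and by the Fredholm alternative it is invertible as soon as its kernel in $\Z$ is trivial.

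Now suppose $(w_1, w_2) \in \Z$ belongs to $\ker \partial_z T(\a, 0, 0)$. Unwinding the definition of $T$ and using the computation that led to system~\eqref{linearization}, this pair is a $D^{1,2}(\R^N)^2$ solution to~\eqref{linearization}. Since $\b(\a) \ne \b_n$ for every $n \in \N$, the hypothesis of case~i) of Proposition~\ref{prop-lin} is fulfilled, so $w_2 \equiv 0$ and $w_1 = \sum_{i=1}^N a_i\, \partial_{x_i} U + b\, W$ is a linear combination of the translation and dilation zero-modes of the Talenti bubble.

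It remains to eliminate $w_1$. By the very definition of $\Z$, $w_1$ lies in $X_k^+$ and is invariant under the action of $\S$, hence $w_1 \in X_\S$. By the standing assumption of the lemma, the only element of $X_\S$ solving the first equation of~\eqref{linearization} is $0$; thus $w_1 \equiv 0$. So the kernel of $\partial_z T(\a, 0, 0)$ in $\Z$ is trivial, and the lemma follows.

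The only point that requires verification beyond bookkeeping is the invariance of $\Z$ under $L$, which follows from the same Kelvin-invariance and $\S$-equivariance computation used in the preceding lemma, so no genuinely new difficulty appears. The substantive content of the lemma is really packaged into its hypothesis on $X_\S$: that hypothesis will be checked case by case when we instantiate the group $\S$ and the character $\sigma$ in the proofs of Theorems~\ref{radial}, \ref{teo1}, \ref{teo3} and~\ref{teo4}.
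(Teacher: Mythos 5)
Your proof is correct and follows essentially the same route as the paper: both reduce invertibility of $\partial_z T(\a,0,0)$ on $\Z$ to the Fredholm alternative for a compact perturbation of the identity and then show the kernel is trivial by identifying it with solutions of~\eqref{linearization}, killing $w_1$ via the $X_\S$ hypothesis and $w_2$ via Proposition~\ref{prop-lin}\,i) since $\b(\a)\ne\b_n$. The only cosmetic differences are that you derive compactness of the linearization from Lemma~\ref{lemma-comp} rather than citing \cite{GGT}, and you eliminate $w_2$ before $w_1$; neither changes the argument.
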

\begin{proof}
  For any $(w_1, w_2) \in X^2$, one has, see \eqref{linearization},
  \begin{equation}
    \label{eq:dzT}
    \partial_z T(\a, 0,0)
    \begin{pmatrix} w_1\\w_2 \end{pmatrix}
    = \begin{pmatrix}
      w_1-(-\Delta)^{-1}
      \Bigl(
      \frac{N+2}{N-2} U^{\frac{4}{N-2}} \, w_1 \Bigr) \\[3\jot]
      w_2 - (-\Delta)^{-1} \Bigl(\b(\a) \,
      U^{\frac{4}{N-2}} \, w_2 \Bigr)
    \end{pmatrix}
  \end{equation}
  with $\b(\a)$ as defined in \eqref{F-transversal}.
  Since $\partial_z T(\a, 0,0)$ is a compact perturbation of the
  identity (see Lemma~3.5 in~\cite{GGT}), it suffices to
  prove that $\ker\bigl(\partial_z T( \a,0,0)\bigr) = \{(0,0)\}$
  in $\Z$ whenever $\b(\a) \ne \b_n$.
  Let $(w_1, w_2) \in \Z \subseteq X_k^+ \times X_k^{\pm}$.
  Notice that
  $\partial_z T( \a,0,0)
  \left(\begin{smallmatrix} w_1 \\ w_2\end{smallmatrix}\right)
  = \left(\begin{smallmatrix} 0\\
      0 \end{smallmatrix}\right)$ if and only if $(w_1, w_2)$ is a
  solution to \eqref{linearization}. 
  By assumption we have that $w_1\equiv0$ and
  Proposition~\ref{prop-lin} says that the only solutions to the
  second equation are given
  by~\eqref{primo-caso} as we assumed $\b(\a) \ne \b_n$.
  This gives the claim.
\end{proof}

\begin{remark}
  From Lemma \ref{lin-invert} we have that, when $\b(\a) \ne \b_n$ for all
  $n$,
  \begin{equation}\label{deg}
    \deg\bigl(T(\a,\cdot), \widetilde{B}, 0\bigr)
    = \deg\bigl(\partial_z T(\a,0,0), \widetilde{B}, 0 \bigr)
    = (-1)^{m(\a)}
  \end{equation}
  where $\widetilde{B}$ is a suitable ball in $\Z$ centered
  at the origin and $m(\a)$ 
the sum
  of the algebraic multiplicities of all eigenvalues $\lambda$
  belonging to $(0,1)$ of the problem
  \begin{equation}\label{lambda}
    \begin{cases}
      \displaystyle
      -\Delta w_1 = \l \, \tfrac{N+2}{N-2} U^{\frac{4}{N-2}}  w_1
      & \text{in }\R^N, \\[1\jot]
      \displaystyle
      -\Delta w_2 = \l \, \b(\a) \, U^{\frac{4}{N-2}} w_2
      & \text{in }\R^N,\\[1\jot]
      (w_1,w_2)\in \Z.
    \end{cases}
  \end{equation}
\end{remark}

\begin{proposition}\label{p-gamma}
  Assume the same hypotheses as in Lemma \ref{lin-invert}. Let $n \in
  \N$ and $\a^*_n$ be such that $\b(\a^*_n) = \b_n$   (recall that $\b_n$
  is defined in~\eqref{eq:bn}).
  For $\e > 0$ small enough, the following holds
  \begin{equation}\label{ind-morse}
    m(\a^*_n +\e) = m(\a^*_n -\e) + \gamma(n)
  \end{equation}
  where $\gamma(n)$ is the algebraic multiplicity of the solutions to
  $-\Delta w = \b_n U^{\frac{4}{N-2}} \, w$ such that $(0,w)\in \Z$.
\end{proposition}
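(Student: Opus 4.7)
The plan is to exploit the block-diagonal structure of the linearization $\partial_z T(\a,0,0)$ given by~\eqref{eq:dzT} and to track the monotone motion, across the threshold $\l = 1$, of the eigenvalues associated with the second component. Since the linearization decouples, the eigenvalue problem~\eqref{lambda} splits into two independent scalar problems. Let $\Y_1 := X_\S \cap X_k^+$ and $\Y_2 := \{w \in X_k^{\pm} : (0, w) \in \Z\}$ denote the two ``slices'' of $\Z$ associated with the components $w_1$ and $w_2$. Every eigenfunction in $\Z$ splits as $(w_1, 0) + (0, w_2)$ with $w_1 \in \Y_1$, $w_2 \in \Y_2$, so
\begin{equation*}
  m(\a) = m_1 + m_2(\a),
\end{equation*}
where $m_1$ (resp.\ $m_2(\a)$) is the sum of the algebraic multiplicities of the eigenvalues in $(0,1)$ of the first (resp.\ second) scalar equation of~\eqref{lambda} on $\Y_1$ (resp.\ $\Y_2$).

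The quantity $m_1$ does not depend on $\a$, since the first equation does not involve $\b(\a)$; moreover the hypothesis of Lemma~\ref{lin-invert} — that the first equation has only the trivial solution on $\Y_1$ — means that $\l = 1$ is not an eigenvalue of the first equation on $\Y_1$, so no eigenvalue can cross $1$ when $\a$ is perturbed. For the second equation I would introduce the compact, self-adjoint operator $K : w \mapsto (-\Delta)^{-1}\bigl(U^{4/(N-2)} w\bigr)$ on the Hilbert slice $\Y_2$ regarded as a closed subspace of $D^{1,2}(\R^N)$ endowed with its standard inner product. By Proposition~\ref{prop-lin} the eigenvalues of $K$ restricted to $\Y_2$ form a subset of $\{1/\b_k\}_{k \in \N}$, so the $\l$-eigenvalues of the second equation of~\eqref{lambda} are
\begin{equation*}
  \l_k(\a) = \frac{\b_k}{\b(\a)}
\end{equation*}
with multiplicity $\gamma(k) := \dim\{w \in \Y_2 : -\Delta w = \b_k U^{4/(N-2)} w\}$. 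The self-adjointness of $K$ ensures that algebraic and geometric multiplicities coincide, which is what enters the degree formula~\eqref{deg}.

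By~\eqref{F-transversal}, $\b$ is strictly increasing, so $\a \mapsto \l_k(\a)$ is strictly decreasing for every $k$. Since $\b(\a^*_n) = \b_n$, one has $\l_n(\a^*_n) = 1$, and by continuity together with the discreteness of $\{\b_k\}$, for $\e > 0$ small enough $\l_n$ is the \emph{only} eigenvalue crossing the value $1$ on $[\a^*_n - \e,\, \a^*_n + \e]$; it moves from $\l_n(\a^*_n - \e) > 1$ to $\l_n(\a^*_n + \e) < 1$, thereby entering $(0, 1)$ and adding exactly $\gamma(n)$ units to $m_2$. Combined with the constancy of $m_1$, this yields $m(\a^*_n + \e) = m(\a^*_n - \e) + \gamma(n)$, which is~\eqref{ind-morse}. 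The main subtlety is the identification of algebraic with geometric multiplicity, which is not automatic for compact perturbations of the identity on the Banach space $X^2$; this is precisely why I would pass to the Hilbert slice $\Y_2$, where $K$ is self-adjoint and the two multiplicities coincide.
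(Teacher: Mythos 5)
Your proof is correct and takes essentially the same route as the paper's: the contribution of the first equation of \eqref{lambda} is independent of $\a$, while the eigenvalues of the second equation are $\b_k/\b(\a)$, and the strict monotonicity of $\b$ forces exactly the family with $k=n$ to cross $\l=1$ at $\a^*_n$, contributing $\gamma(n)$ to the count. Your additional remark identifying algebraic with geometric multiplicity via the self-adjointness of $w\mapsto(-\Delta)^{-1}\bigl(U^{\frac{4}{N-2}}w\bigr)$ on the $D^{1,2}$-slice is a welcome precision that the paper leaves implicit.
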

\begin{proof}
  As the first equation of~\eqref{lambda} does not depend on $\a$,
  its contribution is the same to the values $m(\a^*_n\pm\e)$.
  Concerning the second one, since $\b(\a)$ is a continuous increasing
  function we have get that $\b(\a^*_n +\e)\searrow\b(\a^*_n)$ and then
  the contribution of the second equation to
  $m(\a^*_n +\e)$ is given by the algebraic multiplicity of the
  eigenvalues $\lambda=\left\{\frac1{\b(\a^*_n +\e)},\dotsc,
    \frac{\b_n}{\b(\a^*_n+\e)}\right\}$. In the same way, for $\e$ small
  enough we have that  $m(\a^*_n -\e)$ is given by the algebraic
  multiplicity of the eigenvalues
  $\lambda=\left\{\frac1{\b(\a^*_n +\e)},\dotsc,
    \frac{\b_{n-1}}{\b(\a^*_n +\e)}\right\}$. This gives the claim.
\end{proof}
\begin{proposition}\label{mf1}
  Assume the same hypotheses as in Lemma \ref{lin-invert} and let us
  suppose that $\gamma(n)$ is an odd integer.
  Then the point  $(\a^*_n,
  U,U)$ is a  bifurcation point from  the curve of trivial solutions
  $(\a,U,U)$ to System~\eqref{eq:system}. Moreover the bifurcation is
  global, the Rabinowitz alternative holds, and for any sequence
  $(\a_k, u_{1,k}, u_{2,k})$ of solutions converging to
  $(\a^*_n, U,U)$, we have that
  \begin{equation*}
    \begin{cases}
      u_{1,k} = U+\frac{z_{1,k} +z_{2,k}}2 \\[1\jot]
      u_{2,k} =  U+\frac{z_{1,k} -z_{2,k}}2
    \end{cases}
  \end{equation*}
  and, up to a subsequence,
  \begin{equation}\label{mf6}
    \begin{cases}
      u_{1,k}
      = U + \e_k Z_n
      + o(\e_k) ,\\
      u_{2,k}
      = U - \e_k Z_n
      + o(\e_k) ,
      \end{cases}
    \end{equation}
    as $k\to \infty$
    where $Z_n$ is a solution to the second equation in
    \eqref{linearization} such that $(0, Z_n) \in \Z$,
    $\norm{Z_n}_X = 1$ and $\e_k = \norm{z_{2,k}}_X \to 0$.
\end{proposition}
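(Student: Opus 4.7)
The plan is to combine a degree-jump argument furnished by the parity of $\gamma(n)$ with Rabinowitz's global bifurcation theorem, and then read off the leading-order profile of the bifurcating solutions via a rescaling argument.

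First, by the equivariance lemma proved above, $T(\a, \cdot)$ is a compact perturbation of the identity from $\X \cap \Z$ into $\Z$. Lemma~\ref{lin-invert} together with \eqref{deg} gives $\deg\bigl(T(\a,\cdot), \widetilde B, 0\bigr) = (-1)^{m(\a)}$ for every $\a$ with $\b(\a) \neq \b_m$ for all $m \in \N$, where $\widetilde B \subset \Z$ is a small ball around $0$. By Proposition~\ref{p-gamma}, $m(\a_n^* + \e) - m(\a_n^* - \e) = \gamma(n)$, which is odd by hypothesis, so the Leray--Schauder index of the trivial branch changes sign at $\a = \a_n^*$. This is precisely the hypothesis of Rabinowitz's global bifurcation theorem, which yields a continuum $\C \subset \R \times (\X \cap \Z)$ of nontrivial zeros of $T$ emanating from $(\a_n^*, 0, 0)$ and satisfying the Rabinowitz alternative. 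Undoing the change of variables \eqref{2.0} turns $\C$ into the desired continuum of solutions of \eqref{eq:system} bifurcating from $(\a_n^*, U, U)$.

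For the expansion, I take an arbitrary sequence of solutions $(\a_k, u_{1,k}, u_{2,k}) \to (\a_n^*, U, U)$ with $(z_{1,k}, z_{2,k}) \neq (0,0)$, set $\e_k := \norm{z_{2,k}}_X \to 0$, and normalize $\tilde z_{i,k} := z_{i,k}/\e_k$. Starting from $z_{i,k} = (-\Delta)^{-1} f_i(|x|, z_{1,k}, z_{2,k})$ with $\a = \a_k$, a first-order Taylor expansion of $f_i$ around $(0,0)$, and division by $\e_k$, one obtains
\begin{equation*}
  \tilde z_{i,k}
  = (-\Delta)^{-1}\bigl( \partial_{z_1} f_i(|x|,0,0)\, \tilde z_{1,k}
  + \partial_{z_2} f_i(|x|,0,0)\, \tilde z_{2,k} \bigr) + o(1)
  \quad \text{in } X.
\end{equation*}
The linear operator on the right-hand side is compact (same proof as in Lemma~\ref{lemma-comp}), so up to a subsequence $(\tilde z_{1,k}, \tilde z_{2,k}) \to (w_1, w_2)$ strongly in $X^2$, with $(w_1, w_2) \in \Z$ and $\norm{w_2}_X = 1$. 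The limit satisfies \eqref{linearization} at $\a = \a_n^*$, and by the hypothesis of Lemma~\ref{lin-invert} the first equation has only the trivial solution in $X_\S$, forcing $w_1 = 0$, while $w_2$ is a nontrivial solution of $-\Delta w = \b_n U^{4/(N-2)} w$ with $(0, w_2) \in \Z$. Setting $Z_n$ proportional to $w_2$ and reverting through \eqref{2.0} yields the expansion \eqref{mf6}.

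The main technical obstacle is controlling the quadratic remainder $R_k := f(|x|, z_k) - L z_k$ uniformly in $k$, so that $R_k/\e_k \to 0$ in a norm on which $(-\Delta)^{-1}$ acts continuously into $X$. This forces us to use both components of $\norm{\cdot}_X$: the pointwise bound $|z_{i,k}| \le C U$ inherited from the definition of $\X$, combined with \eqref{F-integ}, provides an integrable $L^{2N/(N+2)}$-majorant that allows one to invoke dominated convergence exactly as in the proof of Lemma~\ref{lemma-comp}, and a parallel argument in the $D$-norm (relying on the decay estimate of Lemma~\ref{lemma-ST}) delivers the uniform convergence needed to conclude strong convergence in $X^2$.
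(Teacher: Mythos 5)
The degree-jump and Rabinowitz part of your argument is the same as the paper's and is fine. The gap is in the expansion. You normalize by $\e_k = \norm{z_{2,k}}_X$ and set $\tilde z_{i,k} = z_{i,k}/\e_k$, then invoke compactness to extract a strongly convergent subsequence and a Taylor remainder that is $o(\e_k)$. Both of these steps silently presuppose the a priori bound $\norm{z_{1,k}}_X \le C \norm{z_{2,k}}_X$ (the paper's estimate \eqref{mf3}), which you never prove. Without it, $\tilde z_{1,k}$ need not be bounded in $X$, so the compact operator argument cannot produce a convergent subsequence; likewise, the remainder coming from differentiability of $T$ is only $o(\norm{z_k}_X)$, and dividing by $\e_k$ gives $o(\norm{z_k}_X/\norm{z_{2,k}}_X)$, which is $o(1)$ only if the ratio is controlled. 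A priori nothing prevents a sequence of solutions along the branch with $z_{2,k}$ much smaller than $z_{1,k}$; ruling this out is exactly where the hypothesis of Lemma~\ref{lin-invert} (no nontrivial solution of the first equation of \eqref{linearization} in $X_\S$) must be used \emph{before} the limit is taken, not only afterwards to identify $w_1 = 0$ — your use of it at the end is circular in that respect. The paper handles this by a separate contradiction argument: assuming $\norm{z_{1,k}}/\norm{z_{2,k}} \to \infty$, it normalizes each component by its own norm, passes to the limit in the equation for $w_{1,k}$ (as in \eqref{mf4-1}, with the compactness machinery of Lemma~\ref{lemma-comp}), and obtains a nontrivial solution of the first linearized equation in $X_\S$, a contradiction; the same argument then upgrades boundedness of the ratio to $z_{1,k} = o(\e_k)$, which is what actually yields \eqref{mf6}.

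Two smaller points. You must also check that $z_{2,k} \not\equiv 0$ before dividing by $\e_k$: the paper does this by noting that otherwise $z_{1,k} \in X_\S$ would solve an equation whose only solution near $\a^*_n$ is trivial, contradicting nontriviality of the branch. And your phrase ``quadratic remainder'' suggests second-order Taylor control, which is not available under \eqref{F-cont}--\eqref{F-last} ($F_i$ is only $C^1$ in $u$); the paper instead passes to the limit directly in the quotiented equations \eqref{mf4-1}--\eqref{mf4-2} using \eqref{F-crit}, \eqref{eq:F-integ2} and dominated convergence, which only needs $C^1$ regularity — but again only after the ratio bound is in place.
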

\begin{proof}
  From \eqref{deg} and \eqref{ind-morse}, it is standard to see that
  the curve of trivial solutions for the operator
  $T : \R \times (\X \cap \Z) \to \Z$ bifurcates
  at the values $\a^*_n$ with $\b(\a^*_n) = \b_n$ for any $n$ such that
  $\gamma(n)$ is odd, see \cite[Theorem II.3.2]{kilo} and the
  bifurcation is global. The Rabinowitz alternative finally follows
  from~\cite[Theorem II.3.3]{kilo}.

  Next let us show the expansion \eqref{num0}. Let
  $(z_{1,k},z_{2,k})$ be solutions obtained by the bifurcation
  result to \eqref{eq:system} as $\a_k\to\a^*_n$ (recall that
  $(z_{1,k},z_{2,k})\rightarrow(0,0)$ in the space $X$). First we show
  that
  \begin{equation}\label{mf3}
    \frac{\norm{z_{1,k}}}{\norm{z_{2,k}}} \le C
  \end{equation}
  where $C$ is a constant independent of $k$ and
  $\norm{\cdot} = \norm{\cdot}_X$.
  First, notice that $z_{2,k} \not\equiv 0$ because, if it was,
  $z_{1,k} \in X_\S$ would satisfy
  $-\Delta z_1 = f_1(\abs{x}, z_1, 0)$ but the assumption that the
  first equation of \eqref{linearization} has only the trivial
  solution in $X_\S$ implies that this equation only has trivial
  solutions for $\a \approx \a^*_n$.  This contradicts the fact that
  $(z_{1,k}, z_{2,k})$ lies on the branch of nontrivial solutions.

  To show~\eqref{mf3}, let us argue by contradiction: let us suppose that,
  up to subsequence,
  $\frac{\|z_{1,k}\|}{\|z_{2,k}\|} \to +\infty$.  Set
  $w_{1,k}=\frac{z_{1,k}}{\|z_{1,k}\|}$,
  $w_{2,k}=\frac{z_{2,k}}{\|z_{2,k}\|}$.
  The system satisfied by $w_{1,k}$ and $w_{2,k}$ is
  \begin{subequations}
    \begin{align}
      -\Delta w_{1,k}
      & = \frac1{\|z_{1,k}\|} \biggl[F_1\Bigl(\a_k, U + \|z_{1,k}\|
        \frac{w_{1,k}+\frac{\|z_{2,k}\|}{\|z_{1,k}\|}w_{2,k}}{2},\
        U + \|z_{1,k}\|
        \frac{w_{1,k} - \frac{\|z_{2,k}\|}{\|z_{1,k}\|}w_{2,k}}{2}\Bigr)
        \nonumber\\
      &\hspace{3.2em} + F_2\Bigl(\a_k, U+\|z_{1,k}\|
        \frac{w_{1,k}+\frac{\|z_{2,k}\|}{\|z_{1,k}\|}w_{2,k}}{2},\
        U + \|z_{1,k}\|
        \frac{w_{1,k}-\frac{||z_{2,k}||}{||z_{1,k}||}w_{2,k}}{2}\Bigr)
        \nonumber\\
      &\hspace{3.2em} - 2 U^{2^*-1}\Bigr) \biggr]
        \label{mf4-1} \displaybreak[0]\\[1\jot]
      -\Delta w_{2,k}
      & = \frac1{\|z_{2,k}\|}\biggl[F_1\Bigl(\a_k, U + \|z_{2,k}\|
        \frac{\frac{\|z_{1,k}\|}{\|z_{2,k}\|}w_{1,k} + w_{2,k}}{2},\
        U + \|z_{2,k}\|
        \frac{\frac{\|z_{1,k}\|}{\|z_{2,k}\|}w_{1,k} - w_{2,k}}{2}\Bigr)
        \nonumber\\
      &\hspace{3.2em} - F_2\Bigl(\a_k, U + \|z_{2,k}\|
        \frac{\frac{\|z_{1,k}\|}{\|z_{2,k}\|}w_{1,k} + w_{2,k}}{2},\
        U + \|z_{2,k}\|
        \frac{\frac{\|z_{1,k}\|}{\|z_{2,k}\|}w_{1,k} - w_{2,k}}{2}\Bigr)\biggr]
        \label{mf4-2} \displaybreak[0]\\[1\jot]
      \multicolumn{1}{l}{\rlap{$\|w_{1,k}\| = \|w_{2,k}\| = 1$}}
      \label{mf4-3}
    \end{align}
  \end{subequations}
  Going if necessary to a subsequence, we can assume $w_{1,k} \wto
  w_1$ and $w_{2,k} \wto w_2$ in $D^{1,2}$ for some
  $(w_1, w_2) \in \Z$.  Arguing as in the
  first part of the proof of Lemma~\ref{lemma-comp}, we deduce
  that $w_{1,k} \to w_1$ and $w_{2,k} \to w_2$ in $L^{2^*}(\R^N)$ and
  in $D^{1,2}$.
  Using that $F_i(\a_k,U,U)=U^{2^*-1}$ for $i=1,2$, we can pass
  to the limit on Eq.~\eqref{mf4-1} and show that $w_1 \in X_\S$
  satisfies
  \begin{equation*}
    -\Delta w_1
    =\left[ \frac{\partial F_1}{\partial u_1}\bigl(\a^*_n, U,U\bigr)
      + \frac{\partial F_1}{\partial u_2} \bigl(\a^*_n, U,U\bigr)
      + \frac{\partial F_2}{\partial u_1} \bigl(\a^*_n, U,U\bigr)
      + \frac{\partial F_2}{\partial u_2} \bigl(\a^*_n, U,U\bigr)
    \right]\frac{w_1}{2}
  \end{equation*}
  Moreover, arguing as in the second part of the proof of
  Lemma~\ref{lemma-comp} on \eqref{mf4-1}, we can show that
  $\norm{w_{1,k} - w_1}_D \to 0$.  Thus $w_{1,k} \to w_1$ in $X$ and
  $\norm{w_1} = 1$.
  As in Section~\ref{s2}, using the properties of $F$ we have that
  $w_1\in X_\S$ satisfies
  \begin{equation*}
    -\Delta w_1 = \frac{N+2}{N-2} \, U^{\frac{4}{N-2}} \, w_1
    \quad\text{in }\R^N,
  \end{equation*}
  This is a contradiction since in $X_\S$ the previous equation
  admits only the trivial solution. So \eqref{mf3} holds.
  
  Hence, up to a subsequence, we have that
  $\frac{\|z_{1,k}\|}{\|z_{2,k}\|} \to \delta\ge0$.  Passing to the
  limit in \eqref{mf4-2}, we get that
  \begin{multline*}
    -\Delta w_2
    = \frac{\partial F_1}{\partial u_1}\bigl(\a^*_n, U,U\bigr)
    \frac{\delta w_1+w_2}{2}
    + \frac{\partial F_1}{\partial u_2}\bigl(\a^*_n, U,U\bigr)
    \frac{\delta w_1-w_2}{2}\\
    -\frac{\partial F_2}{\partial u_1}\bigl(\a^*_n, U,U\bigr)
    \frac{\delta w_1+w_2}{2}
    - \frac{\partial F_2}{\partial u_2}\bigl(\a^*_n, U,U\bigr)
    \frac{\delta w_1-w_2}{2}.    
  \end{multline*}
  and, arguing again as in the second part of the proof of
  Lemma~\ref{lemma-comp}, $w_{2,k}\to w_2$ in $X$ with
  $\norm{w_2} = 1$.
  As before, using the properties of $F$, we have that 
  $w_2$ solves
  \begin{equation*}
    -\Delta w_2 = \beta(\a) \, U^{\frac{4}{N-2}} w_2
    \quad\text{in }\R^N,
  \end{equation*}
  and hence $w_2 = Z_n$ where $Z_n$ is a solution to
  the second equation in \eqref{linearization} such that
  $(0, Z_n) \in \Z$ and $\norm{Z_n} = 1$.  Then
  $z_{2,k}= \|z_{2,k}\|(Z_n+o(1))$.  Next we show that
  \begin{equation}\label{mf5}
    z_{1,k} = o(1) \|z_{2,k}\|.
  \end{equation}
  This is clear if
  $\lim\limits_{k\rightarrow+\infty}\frac{\|z_{1,k}\|}{\|z_{2,k}\|}=0$
  since in this case
  \begin{equation}
    \norm{z_{1,k}}
    = \frac{\norm{z_{1,k}}}{\norm{z_{2,k}}} \norm{z_{2,k}}
    = o(1) \norm{z_{2,k}}.
  \end{equation}
  On the other hand, it is not possible that
  $\frac{\norm{z_{1,k}}}{\norm{z_{2,k}}}\ge D>0$ because in this case we can
  pass to the limit in~\eqref{mf4-1} and as before
  we get a contradiction. This shows \eqref{mf5}.  Coming back to the
  definition of $(u_{1,k},u_{2,k})$ we have that \eqref{mf6} holds
  with $\e_k = \|z_{2,k}\|$.
\end{proof}
Now we specify some subgroups $\S$ that satisfy the assumptions of
Lemma \ref{lin-invert}.  Observe that when $\beta(\alpha)\neq \beta_n$
the second equation in \eqref{linearization} does not possess
solutions. The first equation instead admits in $X_k^+$ the solutions
$\sum_{i=1}^N a_i\frac{x_i} {(1+|x|^2)^{N/2} }$. Then, the
assumptions of Lemma \ref{lin-invert} are satisfied if the functions
$ \frac{x_i}{(1+|x|^2)^{N/2}} $ do not belong to $X_S$.
The first example is the radial case which allows to prove Theorem
\ref{radial}. The other examples, which are provided for every
$N\ge3$, prove the existence of different nonradial solutions.

\subsection{The radial case}
\label{sec:radial-case}
Following the previous notation we let $\S=O(N)$ and 
$\sigma: \S \to \{-1,1\}$ be the group morphism such that
$\sigma(s) := 1$ for all $s \in O(N)$. 
Thus
\begin{align*}
  X_{\S}
  &= \bigl\{ v \in X \bigm| \forall x\in \R^N,\
    v(x) = v(\abs{x}) \bigr\},
  \\
  \Z\equiv\Z_{\rad}^{\pm} &= \bigl\{z\in X_k^+ \times X_k^{\pm} \bigm|
                 \forall x\in \R^N,\
                z(x) = z(\abs{x}) \bigr\}.
\end{align*}

\begin{proof}[Proof of Theorem \ref{radial}]
  To prove the bifurcation result we define the operator $T$
  in~\eqref{T} in the space
  $\Z_{\rad}^+\subseteq X_k^+\times X_k^+$ when $n$ is even
  and in the space $\Z_{\rad}^-\subseteq X_k^+\times X_k^-$
  when $n$ is odd.  Recalling the discussion at the beginning of
  Section~\ref{s2}, we have that the linearized operator
  $\partial_zT(\a,0,0)$ is invertible if and only if system
  \eqref{linearization} does not admit solutions in
  $\Z^+_{\rad}$ when $n$ is even ($\Z^-_{\rad}$ in
  case of $n$ odd).  From Proposition \ref{prop-lin} we know that the
  first equation in \eqref{linearization} does not depend on $\alpha$
  and admits the unique radial solution $W(|x|)$ which does not belong
  to $X_k^+$. The second equation in \eqref{linearization} instead
  admits solutions if and only if $\beta(\alpha)=\beta_n$ and the
  corresponding radial solution is $W_n(|x|) := W_{n,0}(r)$. Hence the
  assumption of Lemma \ref{lin-invert} are satisfied.  Moreover from
  \eqref{sol-lin} and the definition of the Jacobi polynomials we have
  that $W_n\in X_k^+$ if $n$ is even and $W_n\in X_k^-$ if $n$ is odd
  showing that $\gamma(n)=1$ for any $n$.  Further, using the
  monotonicity of $\beta(\alpha)$, the global bifurcation result
  and the Rabinowitz alternative follows from Theorem~II.3.2 and
  Theorem~II.3.3 of \cite{kilo}.  Finally
  the fact that the curve is continuously differentiable near the
  bifurcation point follows from the bifurcation result of
  Crandall-Rabinowitz for one-dimensional kernel since the operator
  $T$ is differentiable and the transversality condition holds
  in $\Z$ because
  \begin{equation*}
    \partial_{\a z}T(\a, 0,0)
    \begin{pmatrix} 0\\ W_n \end{pmatrix}
    = - \partial_\a \b(\a)
    \begin{pmatrix}
      0\\[1\jot]
      (-\Delta)^{-1} \Bigl( U^{\frac{4}{N-2}} \, W_n \Bigr)
    \end{pmatrix} ,
  \end{equation*}
  and so
  \begin{equation*}
    \biggl( \begin{pmatrix} 0\\ W_n \end{pmatrix} \biggm|
    \partial_{\a z}T(\a^*_n, 0,0)\begin{pmatrix} 0\\ W_n \end{pmatrix}
    \biggr)_{(D^{1,2})^2}
    = - \partial_\a \b(\a^*_n)
    \int_{\R^N} U^{\frac{4}{N-2}} \, W_n^2 \intd x
    \ne 0.
    \qedhere
  \end{equation*}
\end{proof}

\begin{proof}[Proof of Corollary~\ref{result-scr} ]
  It is easy to check that \eqref{F-cont}--\eqref{F-symm} are
  satisfied.  One readily computes that
   \begin{equation}
    \b(\a)=\begin{cases} 
      2 (2^* - 1 - p) \a - (2^* - 1 - 2p)&\text{in } \eqref{scr},\\[1\jot]
      2^*\a-1&\text{in } \eqref{0b},\\[1\jot]
      \frac{8}{N-2} \a - \frac{6-N}{N-2}&\text{in } \eqref{dh}.
    \end{cases} 
  \end{equation}
  and so \eqref{F-transversal} is also satisfied.  Moreover
  \eqref{eq:degen} holds if and only if $\a^* = \a^*_n$ where $\a^*_n$ is
  defined by~\eqref{3}.
  Corollary~\ref{result-scr} immediately follows.
\end{proof}

\subsection{The first nonradial case}
\label{sec:first-case}

Let $h$ be the reflection through the hyperplane $x_N=0$,
$\S_1 := \langle O(N-1), h \rangle$ be the subgroup generated by
$O(N-1)$ and $h$, and
$\sigma_1 : \S_1 \to \{-1,1\}$ be the group morphism such that
$\sigma_1(s) := 1$ if $s \in O(N-1)$ and $\sigma_1(h) := -1$
($\sigma_1$  is easily seen to be well defined because $h$ commutes
with any element of $O(N-1)$).
Thus
\begin{align*}
  X_{\S_1}
  &= \bigl\{ v \in X_k^+ \bigm| \forall x=(x',x_N)\in \R^N,\
    v(x', x_N) = v(\abs{x'}, - x_N) \bigr\},
  \\
  \Z\equiv\Z_1^{\pm} &= \bigl\{z\in X_k^+ \times X_k^{\pm} \bigm|
                 \forall x=(x',x_N)\in \R^N,\
                 \begin{aligned}[t]
                   &z_1(x',x_N) = z_1(\abs{x'}, - x_N) \text{ and }\\
                   &z_2(x',x_N) = - z_2(\abs{x'}, - x_N) \bigr\}.
                 \end{aligned}
\end{align*}
Observe that the odd symmetry helps to kill the radial solution in the
kernel of the linearized system while the even symmetries help to
avoid the solutions given by the translation invariance of the
problem. Indeed since functions in $X_{\S_1}$ are even with respect to
each $x_i$, $i = 1,\dotsc, N$ and belong to $X_k^+$ from
Proposition~\ref{prop-lin}, it is easily deduced that the solutions in
$X_{\S_1}$ of the first equation of \eqref{linearization} (see
\eqref{primo-caso}) are the trivial ones.  Thus Lemma~\ref{lin-invert}
applies and by Proposition \ref{mf1} the bifurcation result can be
proved when $\gamma(n)$ is odd.

\begin{proposition}\label{p2.12}
  With this choice of $\S = \S_1$ and $\sigma = \sigma_1$,
  we have that $\gamma(n)$ is odd if
  and only if $n = 4\ell + 1$ or $n = 4\ell + 2$ for $\ell = 0,1,\dots$
\end{proposition}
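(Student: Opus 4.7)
The plan is to compute $\gamma(n)$ by decomposing the kernel of the second equation of~\eqref{linearization} at $\b(\a)=\b_n$ into spherical harmonic components, keeping only those compatible with the symmetries defining $\Z_1^{\pm}$ and with Kelvin invariance, and then to determine the parity of the resulting dimension. I first note that $\gamma(n)$ coincides with the geometric multiplicity, since the operator $w \mapsto (-\Delta)^{-1}\bigl(\b_n U^{4/(N-2)} w\bigr)$ is self-adjoint for the $D^{1,2}$ inner product $(u,v)\mapsto\int \nabla u\cdot\nabla v$; therefore $\gamma(n)$ equals the dimension of the subspace of admissible $w$.

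By Proposition~\ref{prop-lin}(ii), every such $w$ is a sum $w = \sum_{k=0}^{n} A_k W_{n,k}(r) Y_k(\theta)$ with $Y_k$ a spherical harmonic of degree $k$; because the $Y_k$ lie in pairwise orthogonal eigenspaces of the spherical Laplacian, the constraints defining $\Z_1^{\pm}$ decouple mode by mode. For each $k$ the conditions imposed on $W_{n,k} Y_k$ are: $Y_k$ is $O(N-1)$-invariant, $Y_k$ is odd in $x_N$, and $W_{n,k} Y_k$ lies in $X_k^+$ in the case $\Z_1^+$ (or in $X_k^-$ in the case $\Z_1^-$).

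Next I would invoke the classical fact that the subspace of degree-$k$ spherical harmonics invariant under $O(N-1)$ is one-dimensional and spanned by $P_k^{((N-3)/2,(N-3)/2)}(\cos\theta_{N-2})$, a polynomial in $\cos\theta_{N-2}=x_N/|x|$; its parity in $x_N$ coincides with that of this Jacobi polynomial, so oddness in $x_N$ forces $k$ to be odd. The Kelvin parity of $W_{n,k}$ already recorded in Section~\ref{s2} gives $W_{n,k} Y_k \in X_k^+$ iff $n-k$ is even, and $W_{n,k} Y_k \in X_k^-$ iff $n-k$ is odd. With $k$ odd, this forces us to work in $\Z_1^+$ when $n$ is odd and in $\Z_1^-$ when $n$ is even; in either case $\gamma(n)$ equals the number of odd integers in $\{1,\dots,n\}$, namely $\lceil n/2\rceil$.

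The conclusion is then an elementary modulo-$4$ check: writing $n = 4\ell + r$ with $r\in\{0,1,2,3\}$ one has $\lceil n/2\rceil = 2\ell + \lceil r/2\rceil$, which is odd precisely when $r\in\{1,2\}$, i.e.\ when $n = 4\ell+1$ or $n = 4\ell+2$. The only non-routine ingredient is the identification of the $O(N-1)$-invariant spherical harmonics with Jacobi polynomials in $\cos\theta_{N-2}$ together with the correct parity bookkeeping; this is the content the authors defer to Sections~\ref{s3}--\ref{s4}, and is where the bulk of the technical work lies.
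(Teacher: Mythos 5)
Your proposal is correct and follows essentially the same route as the paper: decompose the kernel via Proposition~\ref{prop-lin} into modes $W_{n,k}(r)Y_k(\theta)$, use that the unique $O(N-1)$-invariant spherical harmonic of degree $k$ is $P_k^{(\frac{N-3}2,\frac{N-3}2)}(\cos\theta_{N-2})$ whose parity forces $k$ odd, use the Kelvin parity of $W_{n,k}$ (determined by $n-k$) to select $\Z_1^+$ or $\Z_1^-$ according to the parity of $n$, and count $\gamma(n)=\lceil n/2\rceil$, odd exactly when $n\bmod 4\in\{1,2\}$. Your bookkeeping matches the paper's ($\gamma(n)=j$ for $n=2j$ and $\gamma(n)=j+1$ for $n=2j+1$), so there is nothing further to add.
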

\begin{proof}
  In $\R^N$, we consider the spherical coordinates
  $(r,\varphi,\theta_1,\dots,\theta_{N-2})$ with $r=|x|\in[0,+\infty)$,
  $\varphi\in [0,2\pi]$, and $\theta_i\in [0,\pi]$ as $i=1,2,\dots,N-2$
  with
  \begin{equation}\label{eq:spherical-coord}
    \begin{cases}
      x_1 = r\cos \varphi\sin \theta_1\cdots\sin \theta_{N-2}\\
      x_2 = r\sin \varphi\sin \theta_1\cdots\sin \theta_{N-2}\\
      \hspace{0.5em}\vdots\\
      x_{N-1} = r\sin \theta_{N-2}\cos \theta_{N-3}\\
      x_N = r\cos \theta_{N-2} .
    \end{cases}
  \end{equation}
  Proposition~\ref{prop-lin} says that the solutions to
  $-\Delta w = \b_n \, U^{\frac{4}{N-2}} \, w$ 
  are,
  in radial coordinates, linear combinations of the $n+1$ functions
  \begin{equation}\label{num2}
    [0, +\infty) \times \IS^{N-1} \to \R :
    (r, \varphi,\theta_1,\dots,\theta_{N-2})
    \mapsto W_{n,k}(r)  Y_k(\varphi,\theta_1,\dots,\theta_{N-2})
  \end{equation}
  for $ k=0,\dots,n$,
  where $Y_k(\varphi,\theta_1,\dots,\theta_{N-2})$ are spherical
  harmonics with eigenvalue
  $k(k+N-2)$.   For any $k$, there is only a single (up to a scalar multiple)
  spherical harmonic which is $O(N-1)$-invariant and it is given
  by the function:
  \begin{multline}
    \label{num3}
    Y_k(\varphi,\theta_1,\dots,\theta_{N-2}) = Y_k(\theta_{N-2})
    = P_k^{(\frac{N-3}{2},\thinspace \frac{N-3}{2})}(\cos\theta_{N-2}) \\
    \quad\text{where }
    r \cos\theta_{N-2} = x_N \text{ with }
    \theta_{N-2}\in [0,\pi],
  \end{multline}
  and $P_k^{(\frac{N-3}2,\frac{N-3}2)}$ are
  the Jacobi Polynomials, see \cite{G} for example.  Then, the
  algebraic multiplicity of the solutions to $-\Delta w = \b_n \,
  U^{\frac{4}{N-2}} \, w$ that are $O(N-1)$-invariant
  is $n+1$.  By
  definition of the space $\Z_1$, the solution
  $\bigl(0, W_{n,k}(r) Y_k(\theta_{N-2}) \bigr)$
  belongs to $\Z_1$ if and only if $Y_k$ is odd with respect to $x_N$,
  that is iff
  $Y_h(\theta_{N-2}) = -Y_h(\pi-\theta_{N-2})$.  Since
  the Jacobi Polynomials are \emph{even} if $k$ is even and \emph{odd}
  if $k$ is odd,  $Y_k(\theta_{N-2})$ is odd with respect to $x_N$
  if and only if $k$
  is odd. This implies that to compute $\gamma(n)$ we only have to
  consider the odd indices $k$.

  The radial part corresponding to the index $k$ is given by 
  \begin{equation*}
    W_{n,k}(r) = \frac {r^k}{(1+r^2)^{k+\frac{N-2}2}}
    P_{n-k}^{\left(k+\frac{N-2}2, k+\frac{N-2}2\right)}
      \left(\frac{1-r^2}{1+r^2}\right).  
  \end{equation*}

  If $n=2j$, we consider the operator $T$ defined in
  $X_k^+\times X_k^-$. In this way,
  $\frac{1}{|x|^{N-2}} \cdot W_{n,k}\bigl(\frac x{|x|^2}\bigr) = -W_{n,k}(x)$
  since $n-k$ is odd for any $k$ odd.  Then
  $\gamma(n)=\sum_{k=0, \ k\text{ odd}}^n 1=j$ and it is odd if and
  only if $j=2\ell +1$, or equivalently $n=4\ell +2$.

  If, instead, $n$ is odd, then $n-k$ is even for any $k$ odd and so
  we consider the operator $T$ defined in $X_k^+\times X_k^+$. Indeed,
  in this case, $W_{n,k}(r)\in X_k^+$ for every $k$ odd and so
  $\gamma(n) = j+1$ and it is odd if and only if $j=2\ell$,
  equivalently $n = 4\ell + 1$ and this concludes the proof.
\end{proof}

\begin{proof}[Proof of Theorem \ref{teo1}]
  As explained before we are in position to apply Proposition
  \ref{mf1} using Proposition \ref{p2.12}. The expansion in
  \eqref{num0} follows again from Proposition \ref{mf1}.  Finally let
  us show that our continuum of solutions contains {\em nonradial}
  functions.  If by contradiction we have that $u_1$ and $u_2$ are
  both radial we get that $z_{2}=u_1-u_2$ is also radial. But $z_{2}$
  is odd in the last variable and so we get that $z_{2}\equiv0$. Then
  $u_1=u_2$ and by \eqref{F-1}--\eqref{F-crit} we deduce that
  $F_i(\a,u_1,u_1)= u_1^{2^*-1}$. This implies that $u_1=u_2=U$, a
  contradiction.
\end{proof}

\subsection{The general case: proof of Theorem \ref{teo3}}

Since the general case involves hard notations, for reader's
convenience we consider first the case $m = 2$ and prove Corollary
\ref{teo2}.  The general case does not involve additional difficulties
and we just will sketch it.\par
Let $h_1$ (resp.\ $h_2$) be the reflection through the hyperplane
$x_N = 0$ (resp.\ $x_{N-1} = 0$),
$\S_2 = \langle O(N-2), h_1, h_2 \rangle$ and
$\sigma_2 : \S_2 \to \{-1,1\}$ be the group morphism that satisfies
$\sigma_2(s) = 1$ whenever $s \in O(N-2)$ and
$\sigma_2(h_1) = \sigma_2(h_2) = -1$.  Thus
\begin{align*}
  X_{\S_2}
 &= \bigl\{ v\in X \bigm| \forall x=(x', x_{N-1}, x_N)\in \R^N ,\
    \begin{aligned}[t]
      &v(x', x_{N-1}, x_N) = v(|x'|, - x_{N-1}, x_N),\\
      &v(x', x_{N-1}, x_N) = v(|x'|, x_{N-1}, - x_N) \bigr\},
    \end{aligned}
  \\
  \Z\equiv\Z_2
  &= \bigl\{z\in X^+_k \times X^{\pm}_k \bigm|
    \begin{aligned}[t]
      &\forall x=(x', x_{N-1}, x_N)\in \R^N,\\
&\quad z_1(x', x_{N-1}, x_N) = z_1(|x'|, -x_{N-1}, x_N), \\
      &\quad z_1(x', x_{N-1}, x_N) = z_1(|x'|, x_{N-1}, - x_N), \\
    &\quad z_2(x', x_{N-1}, x_N) = - z_2(|x'|, - x_{N-1}, x_N),
      \text{ and}\\
      &\quad z_2(x', x_{N-1}, x_N) =- z_2(|x'|, x_{N-1}, - x_N) \bigr\},
\end{aligned}
\end{align*}
With this choice, arguing as in the previous case
we have that the only solution in $X_{\S_2}$ to the first equation
of~\eqref{linearization} is the trivial one.  As a consequence,
Proposition~\ref{mf1} applies and a bifurcation occurs
when $\gamma(n)$ is odd.

It remains to compute $\gamma(n)$. To do this we will
compute the dimension of $\Y_k^{\S_2}(\R^N)$, the space of
spherical harmonics on $\R^N$ related to the eigenvalue $k(k+N-2)$
which are invariant by the action of $\S$ induced by $\sigma$ (thus,
for $\S = \S_2$,
we select the spherical harmonics which
are invariant under the action of $O(N-2)$ and odd with respect to
$x_N$ and $x_{N-1}$).

First, let use prove the following decomposition lemma:
\begin{lemma}\label{l3.1}
  Let $\P^{\S_2}(\R^N)$ be the space of the polynomials in $N$
  variables which are invariant by the action of $O(N-2)$ and such
  that $\forall x\in \R^N,\ v(h_i(x))=-v(x)$, for $i=1,2$. Then
  \begin{equation}
    \P^{\S_2}(\R^N) = x_N x_{N-1} \R[r^2,x_{N-1}^2,x_N^2]
    \quad\text{where } r^2=x_1^2+\dots+x_{N-2}^2
  \end{equation}
  and $\R[a_1,\dots,a_k]$ denotes the
  space of polynomials in the variables $a_1, \dots,a_k$.
\end{lemma}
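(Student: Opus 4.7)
The plan is to prove the two inclusions. The inclusion $x_N x_{N-1} \R[r^2,x_{N-1}^2,x_N^2] \subseteq \P^{\S_2}(\R^N)$ is immediate: an element of the right-hand side is manifestly $O(N-2)$-invariant (as it depends on $x_1,\dots,x_{N-2}$ only through $r^2$) and it changes sign under $h_1$ (via the factor $x_{N-1}$, while $r^2$, $x_{N-1}^2$ and $x_N^2$ are unchanged) and similarly under $h_2$.

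For the converse inclusion, let $v\in\P^{\S_2}(\R^N)$. I would first exploit the odd symmetries to factor out $x_{N-1} x_N$. Since $v(x',x_{N-1},-x_N) = -v(x',x_{N-1},x_N)$, evaluating at $x_N=0$ gives $v|_{x_N=0}=0$, so $v$ is divisible (as a polynomial) by $x_N$; write $v = x_N v_1$. The quotient $v_1$ is then even in $x_N$ and inherits the odd symmetry in $x_{N-1}$; hence $v_1|_{x_{N-1}=0}=0$ and $v_1 = x_{N-1} v_2$ with $v_2$ even in both $x_{N-1}$ and $x_N$ and still $O(N-2)$-invariant in $x'$.

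Next I would observe that a polynomial in $x_{N-1}$ which is even can be written as a polynomial in $x_{N-1}^2$ (separate even/odd parts, the odd part vanishes), and likewise for $x_N$. Applying both, there is a polynomial $P$ such that
\begin{equation*}
v_2(x',x_{N-1},x_N) = P(x_1,\dots,x_{N-2},\, x_{N-1}^2,\, x_N^2).
\end{equation*}
The $O(N-2)$-invariance of $v_2$ (acting trivially on the last two coordinates) says that, for each fixed value of the last two arguments, $P$ is an $O(N-2)$-invariant polynomial in $x_1,\dots,x_{N-2}$. By the classical first fundamental theorem of invariant theory for the orthogonal group, every such invariant polynomial is a polynomial in $r^2 = x_1^2+\dots+x_{N-2}^2$. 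Applying this fact with the extra variables $x_{N-1}^2,x_N^2$ as parameters yields a polynomial $Q$ with $P(x_1,\dots,x_{N-2},y,z)=Q(r^2,y,z)$, and hence $v = x_N x_{N-1}\, Q(r^2, x_{N-1}^2, x_N^2)$, as claimed.

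The only nonroutine ingredient is the invariant-theoretic step describing the ring of $O(N-2)$-invariants as $\R[r^2]$; the rest is elementary polynomial manipulation (division by $x_N$ and by $x_{N-1}$, and separation into even parts). For $N=3$ one has $O(1)=\{\pm 1\}$ and the statement reduces to the fact that an even polynomial in one variable is a polynomial in its square, so the argument still applies verbatim.
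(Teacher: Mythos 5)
Your proof is correct and takes essentially the same route as the paper's: both factor out $x_{N-1}x_N$ using the two odd symmetries and then identify the even, $O(N-2)$-invariant quotient with a polynomial in $r^2$, $x_{N-1}^2$, $x_N^2$. The only difference is cosmetic: you justify divisibility by evaluating at $x_N=0$ and $x_{N-1}=0$ and cite the first fundamental theorem for $O(N-2)$-invariants, whereas the paper (following Smoller--Wasserman) argues term by term and proves the invariant-theoretic step directly by rotating $(x_1,\dots,x_{N-2})$ to $(r,0,\dots,0)$ and using evenness in $r$.
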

\begin{proof}
  \sloppy %
  The proof is similar as in Lemma 6.4 in \cite{SW}. If $p(x)$
  is a polynomial in\linebreak[3]
  $x_N x_{N-1}\R[r^2,x_{N-1}^2,x_N^2]$
  then it has an
  odd degree in $x_N$ and $x_{N-1}$ and so it satisfies
  $p(h_i(x))=-p(x)$ for $i=1,2$. Moreover it
  depends on even powers of $x_1^2+\dots+x_{N-2}^2$ and so it is
  invariant with respect to any $s\in O(N-2)$.  Thus
  $x_N x_{N-1} \R[r^2,x_{N-1}^2,x_N^2] \subseteq \P^{\S_2}(\R^N)$.

  Conversely, let $p\in \P^{\S_2}(\R^N)$. Since
  $p(h_i(x))=-p(x)$ for $i=1,2$ then each term in $p$ has to contain
  an odd power of $x_{N-1}$ and $x_N$.  We can then define the
  polynomial
  $q(x) := \frac{p(x)}{x_{N-1}x_N}$ which is even in $x_{N-1}$ and
  $x_N$.
  Now let $s\in O(N-2)$ such that $s(x_1,\dots,x_{N-2})=(r,0,\dots,0)$
  with $r^2=x_1^2+\dots+x_{N-2}^2$. Then $q$ is invariant so
  that $q(x_1,\dots,x_N) = q\bigr(s(x_1,\dots,x_{N-2}),x_{N-1},x_N\bigr)
  = q(r,0\dots,0,x_{N-1},x_N) = q(-r,0\dots,0,x_{N-1},x_N)$
  where the last equality comes from the fact that
  the map $(x_1, x_2,\dots,x_{N-2})\mapsto (-x_1, x_2,\dots,x_{N-2})$
  belongs to $O(N-2)$. Then $q$ has to be even in $r$
  and this implies that $q \in \R[r^2,x_{N-1}^2,x_N^2]$.
\end{proof}

\begin{proposition}\label{p2.14}
  With this choice of $\S = \S_2$ and $\sigma = \sigma_2$,
  $\gamma(n)$ is odd if
  and only if $n = 8\ell + 2$, $n= 8\ell + 3$, $n= 8\ell + 4$ or $n =
  8\ell + 5$ for $\ell = 0,1,\dots$
\end{proposition}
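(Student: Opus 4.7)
The plan is to mimic the structure of the proof of Proposition~3.9 (the $m=1$ case) but with the symmetry group $\S_2$ in place of $\S_1$, using Lemma~3.8 as the structural input to count the relevant spherical harmonics.

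First, by Proposition~3.1 a pair $(0,w)$ lies in $\ker\partial_z T(\a^*_n,0,0)$ on $\Z_2$ only when $w$ is a linear combination of functions $W_{n,k}(r) Y_k(\theta)$, $k=0,\dots,n$, and the additional requirement $(0,w)\in\Z_2$ forces each $Y_k$ to be $O(N-2)$-invariant and odd in both $x_{N-1}$ and $x_N$. Call this space $\Y_k^{\S_2}$. I would then compute $\dim \Y_k^{\S_2}$ via the harmonic decomposition
\begin{equation*}
\P_k^{\S_2}(\R^N) = \bigoplus_{j=0}^{\lfloor k/2\rfloor} |x|^{2j}\,\Y_{k-2j}^{\S_2},
\end{equation*}
which is legitimate because $|x|^2$ is $\S_2$-invariant (it is $O(N)$-invariant and even in $x_{N-1},x_N$).

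Lemma~3.8 gives $\P^{\S_2}(\R^N) = x_{N-1}x_N\,\R[r^2,x_{N-1}^2,x_N^2]$. Extracting the degree-$k$ part, one sees that $\dim \P_k^{\S_2} = 0$ if $k$ is odd (or $k<2$), while for even $k\ge 2$,
\begin{equation*}
\dim \P_k^{\S_2} = \#\{(a,b,c)\in\N^3 : a+b+c = (k-2)/2\} = \binom{k/2+1}{2}.
\end{equation*}
Subtracting consecutive dimensions in the harmonic decomposition yields $\dim\Y_k^{\S_2}=k/2$ for $k\ge 2$ even and $0$ otherwise; in particular only even indices $k$ contribute.

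Next, as in the proof of Proposition~3.9, I would pick the space $X_k^+\times X_k^{\pm}$ according to the parity of $n-k$. Since the relevant $k$ are even, the Kelvin parity $(-1)^{n-k}$ of $W_{n,k}Y_k$ has the same sign as $(-1)^n$, so we choose $X_k^+\times X_k^+$ when $n$ is even and $X_k^+\times X_k^-$ when $n$ is odd. In either case all even $k\in\{2,4,\dots,2\lfloor n/2\rfloor\}$ contribute, giving
\begin{equation*}
\gamma(n) = \sum_{\substack{k=2\\ k \text{ even}}}^{n} \frac{k}{2}
= \sum_{i=1}^{\lfloor n/2\rfloor} i = \frac{\lfloor n/2\rfloor(\lfloor n/2\rfloor+1)}{2}.
\end{equation*}
Finally, a triangular number $T_j = j(j+1)/2$ is odd iff $j\equiv 1,2\pmod 4$. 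Translating back through $j=\lfloor n/2\rfloor$ yields $n\equiv 2,3,4,5\pmod 8$, as claimed.

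The only substantive obstacle is the dimension count of $\Y_k^{\S_2}$ (and justifying that the harmonic decomposition restricts correctly to the $\S_2$-isotypical component); everything else, including the parity discussion, is a direct transcription of the $m=1$ argument already carried out in Proposition~3.9.
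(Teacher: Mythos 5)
Your argument is correct and follows essentially the same route as the paper: the polynomial structure lemma $\P^{\S_2}(\R^N)=x_{N-1}x_N\,\R[r^2,x_{N-1}^2,x_N^2]$, the harmonic decomposition giving $\dim\Y_k^{\S_2}=\dim\P_k^{\S_2}-\dim\P_{k-2}^{\S_2}=k/2$ for even $k$, the uniform Kelvin-parity choice of $X_k^+\times X_k^{\pm}$ since all contributing $k$ are even, and the triangular-number parity count $\gamma(n)=\tfrac12\lfloor n/2\rfloor(\lfloor n/2\rfloor+1)$ leading to $n\equiv 2,3,4,5 \pmod 8$. The computations agree with the paper's ($\binom{k/2+1}{2}=\tfrac{k}{4}(\tfrac{k}{2}+1)$), so no gap remains.
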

\begin{proof}
  Recall that $\Y_k(\R^N)$, the space of spherical harmonics of
  eigenvalue $k(k+N-2)$ for $-\Delta_{\IS^{N-1}}$ consists of
  harmonic homogeneous polynomials of degree~$k$.
  As stated in Proposition~5.5 of~\cite{A}, the space $\P_k$ of
  homogeneous polynomials of degree $k$ can be decomposed as a direct
  sum of $\Y_k(\R^N)$ with a subspace isomorphic to $\P_{k-2}$.
  This decomposition still holds when restricted to polynomials that
  are $O(N-2)$-invariant and odd with respect to $x_N$ and $x_{N-1}$.
  This follows
  easily using the formula (5.6) of~\cite{A}.  As a consequence,
  \begin{equation}\label{3.5}
    \dim \Y_k^{\S_2}(\R^N)
    = \dim\P_k^{\S_2}(\R^N) - \dim\P_{k-2}^{\S_2}(\R^N)
  \end{equation}
  where $\P_k^{\S_2}(\R^N)$ is the space of homogeneous
  polynomials on $\R^N$ of degree $k$ which are $O(N-2)$-invariant and
  odd with respect to $x_N$ and $x_{N-1}$.

  In view of~\eqref{3.5}, we have to compute the dimension of
  $\P_k^{\S_2}(\R^N)$ using the decomposition in Lemma~\ref{l3.1}.

   It is not difficult to show that for any $h\in\N$ we have
  $\P_{2h+1}^{\S_2}(\R^N) = \{0\}$ since  any
  polynomial in it must contain $x_{N-1}x_N$ and powers of
  $x_1^2+\dots+x_{N-2}^2$ and this is not possible if
  the degree of the polynomial is odd. So we have proved that
  $\dim\Y_{2h+1}^{\S_2}(\R^N)=0$ for any $h$ and~$N$.

  Then let us compute $\dim\Y_{2h}^{\S_2}(\R^N)$. Again from
  Lemma~\ref{l3.1}, we have that
  $\P_{k}^{\S_2}(\R^N) =
  \spanned\bigl\{ x_N^{2h+1} \, x_{N-1}^{k-2\ell-2h-1} \, r^{2\ell}
  \bigm| h=0, \dots, \frac{k-2}{2}$ and 
  $\ell = 0, \dots,\linebreak[2] \frac{k-2h-2}2 \bigr\}$ so that
  \begin{equation*}
    \dim\P_k^{\S_2}(\R^N)
    = \sum_{h=0}^{\frac{k-2}2}  \sum_{\ell=0}^{\frac{k-2h-2}{2}} 1
    = \frac{k}{4} \Bigl(\frac{k}{2}+1\Bigr)
  \end{equation*}
and using \eqref{3.5} we get for $k$ even
  \begin{equation}\label{3.6}
    \dim\Y_{k}^{\S_2}(\R^N)
    = \frac{k}{4} \Bigl(\frac{k}{2} + 1\Bigr)
    - \frac{k-2}{4} \Bigl(\frac{k-2}{2} + 1\Bigr)
    = \frac{k}{2} .
  \end{equation}
  In this case the unique spherical harmonics which contribute to the
  computation of $\gamma(n)$ are those of index $k$ even. The
  corresponding radial part is $W_{n,k}(r)$ which belongs to $X^+$ if
  $n$ is even and to $X^-$ if $n$ is odd.  Then, when $n$ is even we
  define the operator $T$ in the space $X^+\times X^+$ and we have
  that
  \begin{equation*}
    \gamma(n)
    = \sum_{k=0}^{n} \dim\Y_{k}^{\S_2}(\R^N)
    = \sum_{j=0}^{\left\lfloor\frac{n}{2}\right\rfloor} \dim\Y_{2j}^{\S_2}(\R^N)
    = \sum_{j=0}^{\left\lfloor\frac{n}{2}\right\rfloor} j
    = \frac 12 \left\lfloor\frac{n}{2}\right\rfloor
    \left(\left\lfloor\frac{n}{2}\right\rfloor + 1 \right)
  \end{equation*}
  Then $\gamma(n)$ is odd when $n=8j+2$ and $n=8j+4$.
  When $n$ is odd instead, we define the operator $T$ in the space
  $X^+\times X^-$ and we have again that
  \begin{equation*}
    \gamma(n)
    = \sum_{k=0}^{n} \dim\Y_{k}^{\S_2}(\R^N)
    = \sum_{j=0}^{\left\lfloor\frac{n}{2}\right\rfloor} \dim\Y_{2j}^{\S_2}(\R^N)
    = \sum_{j=0}^{\left\lfloor\frac{n}{2}\right\rfloor} j
    = \frac 12 \left\lfloor\frac{n}{2}\right\rfloor
    \left(\left\lfloor\frac{n}{2}\right\rfloor + 1 \right)
  \end{equation*}
  Then $\gamma(n)$ is odd when $n=8j+3$ and $n=8j+5$ concluding the proof.
\end{proof}

\begin{proof}[Proof of Corollary \ref{teo2}]
  It is the same as the one of Theorem~\ref{teo1} (using
  Proposition~\ref{p2.14}).
\end{proof}

\medskip

Now we sketch the general case of Theorem \ref{teo3}. 
Let $N\geq 3$ and $2\le m\le N-1$.
For $i = 1, \dotsc, m$ let $h_i$ be the reflection through the
hyperplane $x_{N+1-i} = 0$,
$\S_m = \langle O(N-m), h_1, \dotsc, h_m \rangle$,
and $\sigma_m : \S_m \to \{-1,1\}$ be the group morphism defined by
$\sigma_m(s) = 1$ for $s \in O(N-m)$ and $\sigma_m(h_i) = -1$.  Thus
\begin{align*}
  X_{\S_m}
  &= \bigl\{v\in X_k^+ \bigm|
    \begin{aligned}[t]
    &\forall x=(x',x_{N-m+1}, \dotsc, x_N) \in \R^{N-m}\times \R^m,\
      \forall i_1,\dotsc, i_m \in \N,\\
      &\hspace{5em}
      v(x) = v(|x'|, (-1)^{i_1} x_{N-m+1}, \dotsc, (-1)^{i_m} x_N)\},
\end{aligned}
  \\
\Z_m
&= \bigl\{z\in X_k^+ \times X_k^{\pm} \bigm|
    \begin{aligned}[t]
      &\forall x = (x', x_{N-m+1}, \dotsc, x_N)\in \R^{N-m}\times \R^m,\
      \forall i_1,\dotsc, i_m \in \N,\\
      & z_1(x) = z_1(\abs{x'}, (-1)^{i_1} x_{N-m+1}, \dotsc, (-1)^{i_m} x_N),
      \text{ and }\\
      &  z_2(x) = (-1)^{i_1+\cdots+i_m}\,
      z_2(\abs{x'}, (-1)^{i_1} x_{N-m+1}, \dotsc, (-1)^{i_m} x_N) \bigr\}.
    \end{aligned}
\end{align*}
As before we have that there is no nontrivial solution in $X_{\S_m}$
to the first equation of~\eqref{linearization}. Hence by Proposition
\ref{mf1} we only have to compute $\gamma(n)$.  Analogously to the case
$m=2$ we use the following decomposition lemma:
\begin{lemma}\label{l2.15}
  Let $\P^{S_m}(\R^N)$ be the space of the polynomials in $N$
  variables which are invariant under the action of $O(N-m)$ and such
  that $\forall x\in \R^N,\ v(h_i(x))=-v(x)$ for all $i=1,\dots,m$. Then
  \begin{equation}
    \P^{\S_m}(\R^N) = x_{N-m+1}\cdots x_{N} \, \R[r^2,x_{N-m+1}^2,\dots,x_N^2]
    \quad\text{where }
    r^2 = x_1^2 + \dots + x_{N-m}^2
  \end{equation}
  and $\R[a_1,\dots,a_k]$ denotes the space of polynomials in the
  variables $a_1, \dots,a_k$.
\end{lemma}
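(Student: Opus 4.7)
The plan is to mimic the proof of Lemma~\ref{l3.1}, adapting the bookkeeping from two to $m$ antisymmetric variables, and treating the two inclusions separately.

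For the easy inclusion $x_{N-m+1}\cdots x_N\,\R[r^2,x_{N-m+1}^2,\dots,x_N^2] \subseteq \P^{\S_m}(\R^N)$, any such element is the product of the prefactor $x_{N-m+1}\cdots x_N$, which has odd degree (equal to one) in each of $x_{N-m+1},\dots,x_N$, with an element of $\R[r^2,x_{N-m+1}^2,\dots,x_N^2]$, which is even in each of these variables and depends on $(x_1,\dots,x_{N-m})$ only through $r^2$. Hence such a polynomial is antisymmetric under each $h_i$ and invariant under any rotation acting on the first $N-m$ coordinates, so it lies in $\P^{\S_m}(\R^N)$.

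For the reverse inclusion, start from $p\in\P^{\S_m}(\R^N)$. The $m$ antisymmetries $p(h_i(x))=-p(x)$ force every monomial of $p$ to contain an odd power of each of $x_{N-m+1},\dots,x_N$, so one may factor $p = x_{N-m+1}\cdots x_N \cdot q$, with $q$ a polynomial that is even in each of $x_{N-m+1},\dots,x_N$ and still $O(N-m)$-invariant on $(x_1,\dots,x_{N-m})$. Expanding $q = \sum_{\alpha\in\N^m} q_\alpha(x_1,\dots,x_{N-m})\, x_{N-m+1}^{2\alpha_1}\cdots x_N^{2\alpha_m}$ and identifying coefficients shows that each $q_\alpha$ is itself an $O(N-m)$-invariant polynomial in the $N-m$ variables $x_1,\dots,x_{N-m}$.

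To conclude, I would proceed exactly as in Lemma~\ref{l3.1}: for any $y=(y_1,\dots,y_{N-m})$, pick $s\in O(N-m)$ with $s(y) = (\abs{y},0,\dots,0)$, so that $q_\alpha(y) = q_\alpha(\abs{y},0,\dots,0)$; and the reflection $(x_1,x_2,\dots,x_{N-m})\mapsto(-x_1,x_2,\dots,x_{N-m})$, being an element of $O(N-m)$, forces the one-variable polynomial $t \mapsto q_\alpha(t,0,\dots,0)$ to be even, hence a polynomial in $t^2$. Substituting $t^2 = r^2$ then gives $q_\alpha\in\R[r^2]$; summing over $\alpha$ yields $q\in\R[r^2,x_{N-m+1}^2,\dots,x_N^2]$, whence $p$ belongs to the right-hand side of the claimed identity. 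The only point that calls for some care is the multi-index expansion which decouples the $O(N-m)$-invariance from the remaining $m$ variables; beyond that, the argument is a routine generalisation of the $m=2$ case and presents no new obstacle.
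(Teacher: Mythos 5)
Your proof is correct and is essentially the argument the paper intends: the paper states Lemma~\ref{l2.15} without proof as the direct analogue of Lemma~\ref{l3.1}, and your two inclusions (factoring out $x_{N-m+1}\cdots x_N$ after the $m$ antisymmetries force odd degree in each of these variables, then using a rotation to $(\abs{y},0,\dots,0)$ and the sign flip of $x_1$ to get evenness in $r$) reproduce exactly that argument for general $m$. The multi-index expansion identifying each coefficient $q_\alpha$ as an $O(N-m)$-invariant polynomial is a slightly more explicit bookkeeping than the paper's $m=2$ treatment, but it is the same approach and introduces no new ideas.
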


\begin{proposition}\label{p4.10}
  With this choice of $\S = \S_m$ and $\sigma = \sigma_m$,
  $\gamma(n)$ is odd if
  and only if
  \begin{equation*}
    \binom{m+\left\lfloor\frac{n-m}2\right\rfloor}{m}
    \text{ is an odd integer}.
  \end{equation*}
\end{proposition}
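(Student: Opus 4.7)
My plan is to follow closely the template established for the case $m = 2$ in Proposition~\ref{p2.14}, extending each step via a combinatorial identity that makes the general pattern transparent. The three key ingredients are (a) the monomial enumeration supplied by Lemma~\ref{l2.15}, (b) the Axler--Bourdon--Ramey decomposition $\P_k = \Y_k \oplus |x|^2 \P_{k-2}$ (which is compatible with the $\S_m$-action and thus restricts to the equivariant subspaces, as was already exploited in \eqref{3.5}), and (c) the hockey-stick identity to collapse the resulting sum.

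First I would compute $\dim \P_k^{\S_m}(\R^N)$. By Lemma~\ref{l2.15} a basis of $\P^{\S_m}(\R^N)$ consists of monomials of the form $x_{N-m+1}\cdots x_N \cdot x_{N-m+1}^{2a_1}\cdots x_N^{2a_m} (r^2)^\ell$. Homogeneity forces the total degree $m + 2(a_1+\cdots+a_m+\ell)$ to equal $k$, so $\P_k^{\S_m}(\R^N) = \{0\}$ unless $k \geq m$ and $k \equiv m \pmod{2}$; in that case $\dim \P_k^{\S_m}(\R^N) = \binom{m + (k-m)/2}{m}$, the number of tuples $(a_1,\dotsc,a_m,\ell) \in \N^{m+1}$ with prescribed sum. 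The analogue of \eqref{3.5} then gives, for $k = m + 2j$ with $j \ge 0$,
\begin{equation*}
  \dim \Y_{m+2j}^{\S_m}(\R^N)
  = \binom{j+m}{m} - \binom{j+m-1}{m}
  = \binom{j+m-1}{m-1}
\end{equation*}
by Pascal's rule (the case $j = 0$ being $\binom{m-1}{m-1} = 1$, which corresponds to the single monomial $x_{N-m+1}\cdots x_N$).

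Next I would sum these dimensions to obtain $\gamma(n)$. The only indices $k$ that contribute are those of the form $k = m + 2j$ with $0 \le j \le \lfloor(n-m)/2\rfloor$, and since all these indices satisfy $n-k \equiv n-m \pmod{2}$, every corresponding radial factor $W_{n,k}$ sits in the same Kelvin space; accordingly, one sets the operator $T$ on $X_k^+ \times X_k^+$ or $X_k^+ \times X_k^-$ depending on the parity of $n-m$, exactly as in the proof of Proposition~\ref{p2.14}. The hockey-stick identity then yields
\begin{equation*}
  \gamma(n)
  = \sum_{j=0}^{\lfloor (n-m)/2 \rfloor} \binom{j+m-1}{m-1}
  = \binom{m + \lfloor (n-m)/2 \rfloor}{m},
\end{equation*}
and the parity condition is exactly \eqref{max}.

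The step I expect to require the most care is not the combinatorics but the bookkeeping of symmetries: one must verify that \emph{every} spherical harmonic $Y_k \in \Y_k^{\S_m}(\R^N)$ actually produces a pair $(0, W_{n,k} Y_k)$ lying in $\Z_m$ (which amounts to matching the $\S_m$-equivariance to the Kelvin parity, using that $W_{n,k}$ has Kelvin sign $(-1)^{n-k}$ because the Jacobi polynomial $P_{n-k}^{((N-2)/2 + k,\,(N-2)/2+k)}$ has parity $(-1)^{n-k}$), and conversely that no other solution of the second equation of \eqref{linearization} enters $\Z_m$. Once this matching is verified, the dimension count above is exactly the algebraic multiplicity $\gamma(n)$ and the proposition follows.
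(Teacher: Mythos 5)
Your proposal is correct and follows essentially the same route as the paper: the monomial count from Lemma~\ref{l2.15} giving $\dim\P_{m+2h}^{\S_m}(\R^N)=\binom{h+m}{m}$, the equivariant version of the decomposition \eqref{3.5*} yielding $\dim\Y_{m+2h}^{\S_m}(\R^N)=\binom{h+m-1}{m-1}$, and the hockey-stick identity to obtain $\gamma(n)=\binom{m+\lfloor(n-m)/2\rfloor}{m}$. Your explicit remark that all contributing indices $k\equiv m\pmod 2$ give radial factors $W_{n,k}$ of one common Kelvin parity, so that $T$ is set on $X_k^+\times X_k^{\pm}$ according to the parity of $n-m$, is exactly the bookkeeping the paper handles by reference to the case $m=2$.
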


\begin{proof}
  As in the proof of Proposition \ref{p2.14} we have that
  \begin{equation}\label{3.5*}
    \dim \Y_k^{\S_m}(\R^N)
    = \dim\P_k^{\S_m}(\R^N) - \dim\P_{k-2}^{\S_m}(\R^N)
  \end{equation}
  where $\P_k^{\S_m}(\R^N)$ is the space of homogeneous
  polynomials on $\R^N$ of degree $k$ which are $O(N-m)$-invariant and
  odd with respect to $x_{N-m+1},\dots,x_{N}$.  Because of
  Lemma~\ref{l2.15}, all non-zero polynomials invariant under the action induced by $\sigma$ on $\S_m$ must have degree at least $m$ and so
  $\P_k^{\S_m}(\R^N) = \{0\}$ for $k=0,\dots,m-1$, and
  $\dim\P_{m}^{\S_m}(\R^N) = 1$. Moreover, as in the case $m=2$,
  $\P_{m+2h+1}^{\S_m}(\R^N) = \{0\}$ for any $h \in \N$.
  For $\P_{m+2h}^{\S_m}(\R^N)$, the decomposition
  in Lemma \ref{l2.15} implies that is is isomorphic to
  $\P_{h}(a_1,\dots,a_{m+1})$, the space of homogeneous
  polynomials of degree $h$ in $m+1$ variables.
  Thus $\dim \P_{m+2h}^{\S_m}(\R^N) = \dim \P_{h}(a_1,\dots,a_{m+1})
  = \binom{h+m}{m}$.  Then, using \eqref{3.5*}, we get
  \begin{equation*}
    \dim\Y_{m+2h}^{\S_m}(\R^N)
    =  \binom{h+m}{m}       - \binom{h-1+m}{m}  
    = \binom{h+m-1}{m-1},
    \qquad h \in \N.
  \end{equation*}
  This implies that $\gamma(n)=0$ for $n\leq m-1$. As when $m=2$ we get
  \begin{equation*}
    \gamma(n)
    = \sum_{h=0}^{\left\lfloor\frac {n-m}2\right\rfloor}
    \dim \Y_{m+2h}^{\S_m}(\R^N)
    = \sum_{h=0}^{\left\lfloor\frac {n-m}2\right\rfloor} \binom{h+m-1}{m-1}.
  \end{equation*}
  Now we use the so called \emph{hockey-stick} identity
  \begin{equation*}
    \sum_{i=r}^\ell \binom{i}{r} = \binom{\ell+1}{r+1}
  \end{equation*}
  which implies
  \begin{equation*}
    \gamma(n)
    = \sum_{h=0}^{\left\lfloor\frac {n-m}2\right\rfloor} \binom{h+m-1}{m-1}=
    \binom{m+\left\lfloor\frac{n-m}2\right\rfloor}{m}.
  \end{equation*}
  Finally the proof of Theorem \ref{teo3} follows as in
  Theorem~\ref{teo1}.
\end{proof}

\begin{proof}[Proof of Theorem \ref{teo3}]
  From Proposition \ref{p4.10} we have that $\gamma(n)$ is odd when
  $\binom{m+\left\lfloor\frac{n-m}2\right\rfloor}{m}$ is odd. Then the proof
  follows from Proposition \ref{mf1}.
\end{proof}

\section{Other solutions}\label{s4}

The use of other symmetry subgroups of $O(N)$ makes it possible to
find different solutions. As an example we give another choice that
generates nonradial solutions non equivalent to the previous
ones. 

For $m \ge 1$, let $R_m$ be the
rotation of angle $\frac{2\pi}{m}$ in $\varphi$,
$h_i$ the reflection with respect to
$x_{i} = 0$, $i=2,\dots,N$.  Set
$\S_m = \langle R_m, h_2, h_3,\dots,h_{N} \rangle$, and
$\sigma_m : \S_m \to \{-1,1\}$ be the group morphism defined by
$\sigma_m(R_m) = 1$, $\sigma_m(h_2) = -1$, and $\sigma_m(h_i) = 1$ for
$i = 3,\dots,N$.
(One easily checks that $\sigma_m$ is well defined using $R_m h_2
R_m = h_2$.)
Thus, using spherical coordinates, see~\eqref{eq:spherical-coord},
\begin{align*}
  X_{\S_m}
  &= \Bigl\{v\in X_k^+ \bigm|
    \forall x=(r,\varphi,\theta_1, \dotsc, \theta_{N-2}) \in \R^{N},\\
  &\hspace{5em} v( r,\varphi,\theta_1, \dotsc, \theta_{N-2}  )
    = v\left(r,2\pi-\varphi,
    \pi-\theta_1, \dotsc, \pi-\theta_{N-2}\right),\\
  &\hspace{5em} v( r,\varphi,\theta_1, \dotsc, \theta_{N-2}  )
    = v\Bigl(r,\varphi+\frac{2\pi}{m},
    \pi-\theta_1, \dotsc, \pi-\theta_{N-2}\Bigr) \Bigr\}
  \displaybreak[2]\\[1\jot]
  \Z\equiv\Z_m
  &= \Bigl\{z\in X^+_k \times X^+_k \bigm|
    \begin{aligned}[t]
      &\forall x = (r,\varphi,\theta_1, \dotsc, \theta_{N-2}) \in \R^{N},\\
      &\quad z_1(x) = z_1\left(r,2\pi-\varphi,
        \pi-\theta_1, \dotsc, \pi-\theta_{N-2}\right),\\
      &\quad z_1(x) = z_1\Bigl(r,\varphi+\frac{2\pi}{m},
      \pi-\theta_1, \dotsc, \pi-\theta_{N-2}\Bigr),\\
      &\quad z_2(x) = -z_2\left(r,2\pi-\varphi,
        \pi-\theta_1, \dotsc, \pi-\theta_{N-2}\right),\\
      &\quad z_2(x) = z_2\Bigl(r,\varphi+\frac{2\pi}{m},
      \pi-\theta_1, \dotsc, \pi-\theta_{N-2}\Bigr)        \Bigr\}.
    \end{aligned}
\end{align*}
Let us show that, for any $m\ge 2$, the first equation in
\eqref{linearization} admits only the trivial solution. By
Proposition~\ref{prop-lin}, we have that
$ w= \sum_{i=1}^N a_i\frac{x_i}{(1+|x|^2)^{N/2}} + bW$. By
\eqref{eq:spherical-coord} and the definition of $X_{\S_m}$, we get that
$a_1=a_2=0$
(using the invariance with respect to $R_m$) and
$a_3=\dots=a_{N-2}=0$ (using that $\cos\theta_i\ne\cos(\pi-\theta_i)$,
for any $i=1,\dots,\theta_{N-2}$). Finally $b=0$ since
$W\not\in X^+_k$. Thus the assumptions of Proposition~\ref{lin-invert}
are satisfied.
To apply Proposition~\ref{p-gamma}, we also need:

\begin{proposition}\label{p4}
  Let $m \ge 2$,  $n=m$, $\S := \S_n$ and $\sigma := \sigma_n$. Then
  \begin{equation}
    \gamma(n)=1.
  \end{equation}
\end{proposition}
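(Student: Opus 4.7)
The plan is to classify, inside the kernel described by Proposition~\ref{prop-lin}, those spherical harmonics $Y_k$ of degree $k \le n$ for which $W_{n,k}(r)\,Y_k(\theta)$ lies in the invariant subspace~$\Z_n$.  The $\S_n$-action with character $\sigma_n$ translates into the following requirements on $Y_k$: invariance under the rotation $R_n$ of angle $2\pi/n$ in the $(x_1,x_2)$-plane, oddness under $x_2 \mapsto -x_2$, and invariance under $x_i \mapsto -x_i$ for each $i = 3,\dotsc,N$.  In addition, the inclusion $\Z_n \subseteq X_k^+\times X_k^+$ forces $n - k$ to be even, since as recalled in Section~\ref{s2} we have $W_{n,k} \in X_k^+$ precisely when $n-k$ is even.

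Using the individual evenness of $Y_k$ in each $x_i$ for $i \ge 3$, I would expand
\begin{equation*}
  Y_k(x) = \sum_\beta y^{2\beta}\, Q_{2\beta}(x_1,x_2)
  \quad\text{with }
  y = (x_3,\dotsc,x_N),
\end{equation*}
where each $Q_{2\beta}$ is a homogeneous polynomial of degree $k - 2\abs\beta$.  The remaining symmetries force each $Q_{2\beta}$ to be $R_n$-invariant and odd in~$x_2$.  Setting $z = x_1 + ix_2$, $R_n$-invariance requires every monomial $z^a \bar z^b$ appearing in $Q_{2\beta}$ to satisfy $a - b \equiv 0 \pmod n$, while oddness in~$x_2$ (which swaps $z$ and $\bar z$) forbids the diagonal terms $a = b$.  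Consequently $\abs{a - b} \ge n$ and hence $\deg Q_{2\beta} = a + b \ge n$; this rules out any nonzero $Y_k$ when $k < n$.

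For $k = n$ only $\abs\beta = 0$ contributes, and $Q_0$ must be a scalar multiple of $\Im(z^n) = r^n\sin(n\varphi)(\sin\theta_1)^n\cdots(\sin\theta_{N-2})^n$.  Since $\Im(z^n)$ is already a harmonic polynomial on~$\R^N$, the candidate $W_{n,n}(r)\,\Im(z^n)$ indeed lies in the kernel, and the Kelvin-parity condition is satisfied because $n - n = 0$ is even.  Hence exactly one linearly independent solution survives and $\gamma(n) = 1$.  The main algebraic step, which is what collapses every potential lower-degree contribution, is the classification of two-variable homogeneous polynomials that are simultaneously $R_n$-invariant and odd in~$x_2$; the rest of the argument is bookkeeping of the symmetries and of the Kelvin sign.
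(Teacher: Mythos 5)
Your proof is correct and reaches the same conclusion as the paper, but by a somewhat different route. The paper starts, as you do, from Proposition~\ref{prop-lin} and the observation that $(0,\sum_k A_k W_{n,k}Y_k)\in\Z_n$ forces each $Y_k$ to be $2\pi/n$-periodic in $\varphi$, odd under $\varphi\mapsto 2\pi-\varphi$, and invariant under $\theta_i\mapsto\pi-\theta_i$; but it then invokes the explicit expansion \eqref{sf} of spherical harmonics in Gegenbauer functions and Fourier modes $\cos j\varphi$, $\sin j\varphi$ (from [W]), concluding that periodicity plus oddness force $j$ to be a nonzero multiple of $n$, hence $k\ge j\ge n$, so only $k=n$ survives with the single harmonic $\sin(n\varphi)(\sin\theta_1)^n\cdots(\sin\theta_{N-2})^n=\Im(x_1+\mathbf{i}\,x_2)^n$, whose Kelvin parity is then checked exactly as you do. You replace the Gegenbauer/Wheeler machinery by an elementary Cartesian argument in the spirit of Lemmas~\ref{l3.1} and~\ref{l2.15}: expand a harmonic polynomial even in each $x_3,\dotsc,x_N$ as $\sum_\beta y^{2\beta}Q_{2\beta}(x_1,x_2)$, and classify the two-variable pieces via monomials $z^a\bar z^b$, where $R_n$-invariance gives $a\equiv b \pmod n$ and oddness in $x_2$ kills the diagonal $a=b$, so $\deg Q_{2\beta}\ge n$; this wipes out all $k<n$ and leaves, for $k=n$, only multiples of $\Im(z^n)$, which is harmonic, satisfies the $x_i$-evenness trivially, and has the right Kelvin sign since $W_{n,n}\in X_k^+$. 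What each approach buys: the paper's argument is shorter because it leans on the known classification formula for spherical harmonics, while yours is self-contained, avoids special functions altogether, and fits naturally with the polynomial decomposition lemmas already used for the groups $\S_2$ and $\S_m$ in Section~\ref{s3}; both identify the same one-dimensional kernel and hence $\gamma(n)=1$. (Minor remark: the Kelvin-parity restriction ``$n-k$ even'' you state at the outset is not actually needed for $k<n$ — your symmetry argument already kills those — and for $k=n$ it is automatic, so it plays the same purely confirmatory role as in the paper.)
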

\begin{proof}
  By Proposition \ref{prop-lin} all solutions to the second equation
  of \eqref{linearization} corresponding to $\a^*_m$ are given by
  $\sum_{k=0}^m A_k W_{m,k}(r)
  Y_k(\varphi,\theta_1,\dots,\theta_{N-2})$.
  We know from \cite{W} (see also \cite{AG} for another use of this
  expansion in bifurcation theory) that
  \begin{multline}
    \label{sf}
    Y_k(\varphi,\theta_1,\dots, \theta_{N-2}) \\
    = \!\! \sum\limits_{\substack{j=0,\dots, k\\[2pt]
        i_0\le i_1\dots\le i_{N\!-\!2}\\
        i_0=j, \ i_{N-2} =k}}
    \prod_{\ell=1}^{N-2}
    G_{i_{\ell}}^{i_{\ell-1}}(\cos\theta_\ell, \ell-1)
    \left(B_j^{i_1\dots i_{N\!-\!3}} \cos j\varphi
      + C^{i_1\dots i_{N\!-\!3}}_j \sin j\varphi\right)  \! ,
  \end{multline}
  where $G_i^0(\cdot, \ell)$ are the Gegenbauer polynomials namely,
  \begin{equation*}
    \sum\limits_{i=0}^{\infty}G_i^0(\omega,\ell) x^i
    = (1-2x\omega+x^2)^{-(1+\ell)/2} ,
  \end{equation*}
  while 
  \begin{equation*}
    G_i^k(\omega,\ell)
    =  (1-\omega^2)^{{k}/{2}} \, \frac{\diff^k}{\diff\omega^k}
    G_i^0(\omega,\ell) .
  \end{equation*}
  By definition of the space $\Z_m$, the solution
  $\bigl(0,\sum _{k=0}^m A_j W_{m,k}(r) Y_k(
  \varphi,\theta_1,\dots, \theta_{N-2} ) \bigr)$
  belongs to $\Z_m$ if and only if
  $Y_k( \varphi,\theta_1,\dots, \theta_{N-2} )$ is ${2\pi}/{m}$
  periodic in $\varphi$, changes sign
  under the transformation $\varphi \mapsto 2\pi-\varphi$,
  and is invariant under the transformations $\theta_i \mapsto
  \pi-\theta_i$.  The first two imply
  that $Y_k(\varphi,\theta_1,\dots, \theta_{N-2} )$ must not be constant
  in $\varphi$ and $k \ge j\ge m$.
  Thus solutions to the second equation in \eqref{linearization}
  with $\Z_m$-invariance are
  multiple of $W_{m,m}(r)Y_m(\theta)$.
  Moreover, the unique nonzero coefficient in \eqref{sf} is
  $C_m^{m\dots m}$.
  
  Because $G_i^0(\cdot, \ell)$ is a polynomial of degree $\ell$,
  $G_m^m(\omega, \ell)$ is a constant multiple of $(1 - \omega^2)^{m/2}$.
  A straightforward computation shows that
  \begin{equation*}
    Y_m( \varphi,\theta_1, \dots, \theta_{N-2})
    = (\sin\theta_{N-2})^m \cdots (\sin\theta_1)^m \sin(m\varphi)
    = \Im(x_1+ \mathbf{i}\thinspace x_2)^m.  
  \end{equation*}
  Observe that $Y_m( \varphi,\theta_1, \dots, \theta_{N-2})$ is invariant with
  respect to the reflection $h_i$ for $i=3,\dots,N$, with respect to
  the rotation $R_m$ and it is odd in $\varphi$ so that
  $(0,W_{m,m}(r)Y_m(\theta))$ belongs to $\mathcal{Z}_m$.  Recalling
  that
  $W_{m,m}(r) = \smash{\frac{r^m}{\left(1+r^2\right)^{m+\frac{N-2}2}}}
  \in X^+_k$,
  we have that $\gamma(m)=1$.\linebreak[0]
\end{proof}

\begin{proof}[Proof of Theorem \ref{teo4}]
  From the previous discussion we have that the assumption of Lemma
  \ref{lin-invert} are satisfied.  Then the proof follows as in the
  case of Theorem \ref{radial} since we have a one dimensional kernel.
\end{proof}

\end{document}